\documentclass{article}

\usepackage[a4paper,hmargin={2cm,2cm},vmargin={3cm,3cm}]{geometry}

\usepackage[dvipdfm]{hyperref}

\usepackage[all]{xy}

\usepackage{pb-diagram}
\usepackage{pb-xy}

\usepackage{graphicx}

\usepackage{amsmath}
\usepackage{amsthm}
\usepackage{amssymb}

\usepackage{bm}

\usepackage{latexsym}

  
  %
   {\begin{proof}[Sketch of Proof of #1]}%
   {\end{proof}}


\theoremstyle{plain}
  \newtheorem{theorem}{Theorem}[section]
  \newtheorem{corollary}[theorem]{Corollary}
  
  \newtheorem{lemma}[theorem]{Lemma}
  
  \newtheorem{proposition}[theorem]{Proposition}

\theoremstyle{definition}
  \newtheorem{definition}[theorem]{Definition}
  \newtheorem{assumption}[theorem]{Assumption}

  \newtheorem{convention}[theorem]{Convention}
  
  \newtheorem{ex}[theorem]{Example}

  \newtheorem{remark}[theorem]{Remark}

  \newenvironment{example}{\begin{ex}}{\qed\end{ex}}


\newcommand{\category}[1]{\operatorname{\mathbf{#1}}}
\newcommand{\categories}[1]{{#1}\textbf{-}\mathbf{Categories}}

  \newcommand{\Categories}{\category{Categories}}

  \newcommand{\Sets}{\operatorname{\mathbf{Sets}}}





  \newcommand{\Spaces}{\operatorname{\mathbf{Spaces}}}
  \newcommand{\Spectra}{\operatorname{\mathbf{Spectra}}}


  \newcommand{\lMod}[1]{{#1}\text{-}\mathbf{Mod}}
  
  \newcommand{\lComod}[1]{{#1}\text{-}\mathbf{Comod}}
  \newcommand{\rComod}[1]{\mathbf{Comod}\text{-}{#1}}
  \newcommand{\biComod}[2]{#1\text{-}\mathbf{Comod}\text{-}#2} 

  

  \newcommand{\Mor}{\operatorname{Mor}}



  \newcommand{\Ima}{\operatorname{Im}}

%
%


  \DeclareMathOperator*{\hocolim}{\mathrm{hocolim}}


  

\newcommand{\lset}[2]{%
\left.\left\{#1 \ \right| \ #2\right\}
}

  \newcommand{\rarrow}[1]{\buildrel #1 \over \longrightarrow}
  \newcommand{\larrow}[1]{\buildrel #1 \over \longleftarrow}



  \newcommand{\pr}{\operatorname{\mathrm{pr}}}

  
  \newcommand{\transpose}[1]{\raisebox{1ex}{$\scriptstyle t$}\kern-0.2ex #1}









  



  \newcommand{\op}{{\operatorname{\mathit{op}}}} 
  





  \newcommand{\cof}{\operatorname{\mathrm{cof}}}
  \newcommand{\fib}{\operatorname{\mathrm{fib}}}
  













  \newcommand{\String}{\mathit{\String}}

  
  \newcommand{\Gr}{\operatorname{Gr}}






\newcommand{\sk}{\operatorname{sk}}

\newcommand{\fixhyperref}{%
\ifnum 42146=\euc"A4A2 \AtBeginDvi{}\else
\AtBeginDvi{}\fi}

\newcommand{\rOplax}{\overrightarrow{\category{Oplax}}}
\newcommand{\lOplax}{\overleftarrow{\category{Oplax}}}
\newcommand{\rLax}{\overrightarrow{\category{Lax}}}
\newcommand{\lLax}{\overleftarrow{\category{Lax}}}
\newcommand{\rFunct}{\overrightarrow{\category{Funct}}}
\newcommand{\lFunct}{\overleftarrow{\category{Funct}}}
\newcommand{\rcats}[1]{{#1}\textbf{-}\overrightarrow{\mathbf{Categories}}}
\newcommand{\lcats}[1]{{#1}\textbf{-}\overleftarrow{\mathbf{Categories}}}
\newcommand{\rGamma}{\overrightarrow{\Gamma}}
\newcommand{\lGamma}{\overleftarrow{\Gamma}}
\newcommand{\rrComod}[1]{\overrightarrow{\mathbf{Comod}}\textbf{-}{#1}}
\newcommand{\rlComod}[1]{\overleftarrow{\mathbf{Comod}}\textbf{-}{#1}}
\newcommand{\llComod}[1]{{#1}\textbf{-}\overleftarrow{\mathbf{Comod}}}
\newcommand{\lrComod}[1]{{#1}\textbf{-}\overrightarrow{\mathbf{Comod}}}
\newcommand{\bilComod}[2]{{#1}\textbf{-}\overleftarrow{\mathbf{Comod}}\textbf{-}{#2}}
\newcommand{\birComod}[2]{{#1}\textbf{-}\overrightarrow{\mathbf{Comod}}\textbf{-}{#2}}

\newcommand{\bibdir}{bib}

\title{\bfseries The Grothendieck Construction and Gradings for Enriched
Categories} 
\author{Dai Tamaki
\thanks{Department of Mathematical Sciences, Shinshu University,
Matsumoto, 390-8621, Japan}
\thanks{Partially supported by Grants-in-Aid for Scientific Research,
Ministry of Education, Culture, Sports, Science and Technology, Japan:
17540070}
}

\begin{document}
\maketitle

\begin{abstract}
 The Grothendieck construction is a process to form a single category
 from a diagram of small categories. In this paper, we extend the
 definition of the Grothendieck construction to diagrams of small
 categories enriched over a symmetric monoidal category satisfying
 certain conditions. Symmetric monoidal categories satisfying the
 conditions in this paper include the category of $k$-modules over a
 commutative ring $k$, the category of chain complexes, the category of
 simplicial sets, the category of topological spaces, and the category
 of modern spectra. 
 In particular, we obtain a generalization of the
 orbit category construction in \cite{math/0312214}. We also extend the
 notion of graded categories and show that the Grothendieck construction
 takes values in the category of graded categories. Our definition of
 graded category does not require any coproduct decompositions and
 generalizes $k$-linear graded categories indexed by small categories
 defined in \cite{Lowen08}. 

 There are two popular ways to construct functors from the category of
 graded categories to the category of oplax functors. One of them is the
 smash product construction defined and studied in
 \cite{math/0312214,0807.4706,0905.3884} for $k$-linear categories and
 the other one is the fiber functor. We construct extensions of these
 functors for enriched categories and show that they are ``right adjoint''
 to the Grothendieck construction in suitable senses. 

 As a byproduct, we obtain a new short description of small enriched
 categories. 
\end{abstract}

\tableofcontents

\section{Introduction}

\subsection{The Grothendieck Construction}

Given a diagram of small categories
\[
 X : I \longrightarrow \Categories
\]
indexed by a small category $I$, there is a way to form a single
category $\Gr(X)$, called the Grothendieck construction on $X$. The
construction first appeared in \S8 of Expos{\'e} VI in \cite{SGA1}. It
can be used to prove an equivalence of categories of prestacks and
fibered categories 
\begin{equation}
 \Gr : \category{Prestacks}(I) \longleftrightarrow
 \category{Fibered}(I) : \Gamma, 
 \label{prestack_and_prefibered}
\end{equation}
where $\category{Prestacks}(I)$ is the category of lax presheaves
(contravariant lax
functors satisfying certain conditions)
\[
 X : I^{\op} \longrightarrow \Categories
\]
and $\category{Fibered}(I)$ is the category of fibered categories
\[
 \pi : E \longrightarrow I
\]
over $I$, which is a full subcategory of the category of prefibered
categories $\category{Prefibered}(I)$ over $I$. See
\cite{math.AT/0110247, math.AG/0412512}, for example. 

It was Quillen who first realized the usefulness of (pre)fibered
categories in homotopy theory. In particular, he proved famous Theorem A
and B for the classifying spaces of small categories in
\cite{Quillen73}. The Grothendieck construction was used implicitly in
the proofs of these theorems. Subsequently, the classifying space of
the Grothendieck construction of a diagram of categories was studied by
Thomason \cite{Thomason79-1} who found a description in terms of the
homotopy colimit construction of Bousfield and Kan
\cite{Bousfield-KanBook} 
\begin{equation}
 B\Gr(X) \simeq \hocolim_{I} B\circ X,
  \label{Gr_and_hocolim}
\end{equation}
where 
\[
 B : \Categories \longrightarrow \Spaces
\]
is the classifying space functor described, for example, in
\cite{SegalBG}. 

Since then the Grothendieck construction has been 
one of the most indispensable tools in homotopy theory of classifying
spaces, as is exposed by Dwyer \cite{Dwyer-Henn}.  
The work of Quillen \cite{Quillen78} suggests the usefulness of the
classifying space techniques in combinatorics in which posets are one of
the central objects of study. When the indexing category $I$ is a poset and
the functor $X$ takes values in the category of posets, the Grothendieck
construction of $X$ is called the poset limit of $X$. Thomason's
homotopy colimit description (\ref{Gr_and_hocolim}) has been proved to be
useful in topological combinatorics. See
\cite{Welker-Ziegler-Zivaljevic99}, for example. 

When $G$ is a group regarded as a category with a single object, giving
a functor
\[
 X : G \longrightarrow \category{Categories}
\]
is equivalent to giving a category $X$ equipped with a left action of
$G$. The Grothendieck construction has been used implicitly in this
context. One of the most fundamental examples is the semidirect product
of groups. The translation groupoid $G\ltimes X$ for an action of a
topological group $G$ on a space $X$ used in the study of orbifolds
\cite{Moerdijk02} is another example. 
When $X$ takes values in posets,
Borcherds \cite{Borcherds98} called the Grothendieck construction the
homotopy quotient of $X$ by $G$. This terminology is based on Thomason's
homotopy equivalence (\ref{Gr_and_hocolim})
\[
 B\Gr(X) \simeq \hocolim_G BX = EG\times_G BX
\]
and the fact that the Borel construction $EG\times_G BX$ is regarded as
a ``homotopy theoretic quotient'' of $BX$ under the action of $G$.

Algebraists have been studying group actions on $k$-linear
categories
\[
 X : G \longrightarrow \categories{k}
\]
and several ``quotient category'' constructions are known in relation
to covering theory of $k$-linear categories. It turns out that some of
them \cite{math/0312214,0807.4706,0905.3884} are equivalent to a
$k$-linear version of the Grothendieck construction. More general
diagrams of $k$-linear categories are considered by Gerstenhaber and
Schack in their study of deformation theory. In particular, the
Grothendieck construction was used in \cite{Gerstenhaber-Schack83-2} in 
a process of assembling a diagram $A$ of $k$-linear category into an
algebra $A!$.

Thomason's theorem also suggests the Grothendieck construction can be
regarded as a kind of colimit construction. It is also stated in
Thomason's paper that the construction can be naturally extended to
oplax functors. (See Definition \ref{oplax_functor_definition} for oplax
functors.) In fact, category theorists 
studied the Grothendieck construction as a model of $2$-colimits in the
bicategory of small categories. It was proved by J.~Gray
\cite{J.Gray69} (also stated in \cite{Thomason79-1}) that the
Grothendieck construction regarded as a functor 
\[
 \Gr : \lOplax(I,\category{Categories}) \longrightarrow
 \Categories 
\]
is left adjoint to the diagonal (constant) functor
\[
 \Delta :  \Categories \longrightarrow
 \lOplax(I,\category{Categories}), 
\]
where $\lOplax(I,\category{Categories})$ is the $2$-category of oplax
functors. 

The original motivation for the Grothendieck construction in
\cite{SGA1} and the equivalence (\ref{prestack_and_prefibered}) suggest,
however, we should regard $\Gr$ as a functor
\[
 \Gr : \lOplax(I,\category{Categories}) \longrightarrow
 \overleftarrow{\Categories}\downarrow I,
\]
where $\overleftarrow{\Categories}\downarrow I$ is a $2$-category of
comma categories over $I$ whose
morphisms are relaxed by taking ``left natural transformations'' into
account. See \S\ref{2-category} for precise definitions.

On the other hand, the orbit category construction in
\cite{math/0312214,0807.4706} gives rise to a functor
\[
 \Gr : \lFunct(G,\categories{k}) \longrightarrow
 \lcats{k}_{G},
\]
where $\lcats{k}_{G}$ is the $2$-category of $G$-graded
$k$-linear categories. In a recent paper \cite{0905.3884} Asashiba
proved that this functor induces an equivalence of $2$-categories
\begin{equation}
 \Gr : \lFunct(G,\categories{k}) \longleftrightarrow
 \lcats{k}_{G} : \Gamma.
\label{G-category_and_graded_category} 
\end{equation}
As is suggested by the similarity between this equivalence and
(\ref{prestack_and_prefibered}), we should regard the orbit category
construction as a $k$-linear version of the Grothendieck construction.

This ubiquity of the Grothendieck construction suggests us to work in
a more general framework. For example, when we study the derived category of
a $k$-linear category equipped with a group action, it would be useful
if we have a general theory of the Grothendieck construction for
dg categories, i.e.\ categories enriched over the category of chain
complexes, which can be regarded as a model for enhanced 
triangulated categories according to Bondal and Kapranov
\cite{Bondal-Kapranov90}. Another important 
model of enhanced triangulated categories is the notion of stable
quasicategory (stable $(\infty,1)$-category) by Lurie
\cite{math.CT/0608228}, which is closely related 
to stable simplicial categories appeared in a work of To{\"e}n and
Vezzosi \cite{math/0210125}, i.e.\ categories 
enriched over the category of simplicial sets.

\subsection{Gradings of Categories}

In order to fully understand the meaning of the above similarities and
attack the problem of extending the Grothendieck construction to general 
enriched categories, we should find a unified way
to handle the comma category $\overleftarrow{\Categories}\downarrow I$
and the category 
$\lcats{k}_{G}$ of $G$-graded $k$-linear categories. It is
immediate to extend the definition of the Grothendieck construction to
diagrams of enriched categories, as we will see in
\S\ref{definition_for_oplax_functor}. 
The problem is to find the right co-domain category of the
Grothendieck construction for general enriched categories. 

The notion of group graded $k$-linear categories has been used as a
natural ``many objectification'' of that of group graded
$k$-algebras. They are often defined in terms of coproduct
decompositions, i.e.\ a $k$-linear category $A$ graded by a group $G$ is
a $k$-linear category whose module $A(x,y)$ of morphisms has a coproduct
decomposition  
\[
 A(x,y) = \bigoplus_{g\in G} A^g(x,y)
\]
for each pair of objects $x,y$ satisfying certain compatibility
conditions. 

For a (non-enriched) small category $X$, we may also define a
$G$-grading as a coproduct decomposition
\[
 \Mor_{X}(x,y) = \coprod_{g\in G} \Mor_{X}^g(x,y)
\]
of the set of morphisms for each $x,y$, which satisfies the analogous
compatibility conditions for group graded $k$-linear categories.
This coproduct approach can be extended to categories graded by a small
category, including $k$-linear categories. See, for example,
\cite{Lowen08}. 

Notice, however, that in the case of group graded small (non-enriched)
categories,  
a $G$-grading on a category $X$ can be defined simply as a functor
\[
 p : X \longrightarrow G.
\]
All the necessary compatibility conditions are encoded into the
functoriality of $p$. And this definition of grading is close to the
co-domain in (\ref{prestack_and_prefibered}). It is desirable to
redefine graded categories without referring to coproduct
decompositions in order to find a correct co-domain of the Grothendieck
construction, although the idea of describing a $G$-graded category as 
a $k$-linear functor from $A$ to $k[G]$ fails immediately.

In fact such an approach has already appeared in a classical work
on group graded algebras by Cohen and Montgomery  
\cite{Cohen-Montgomery84}. When translated into the language of
comodules, their observation can be stated as follows.

\begin{lemma}
 \label{Cohen-Montgomery}
 Let $A$ be an algebra over a commutative ring $k$ and $G$ be a
 group. Then there is a one-to-one correspondence between gradings of
 $A$ by $G$ and comodule algebra structure on $A$ over $k[G]$.
\end{lemma}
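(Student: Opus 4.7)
The plan is to set up the correspondence explicitly in both directions and then check that each construction inverts the other. Given a $G$-grading $A = \bigoplus_{g \in G} A_g$ compatible with the multiplication, define a $k$-linear map $\rho : A \to A \otimes_k k[G]$ by $\rho(a) = a \otimes g$ for $a \in A_g$ and linear extension. Conversely, given a $k[G]$-comodule algebra structure $\rho : A \to A \otimes_k k[G]$, set
\[
A_g = \{ a \in A \mid \rho(a) = a \otimes g \}
\]
for each $g \in G$.

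For the forward direction, counitality and coassociativity of $\rho$ follow immediately from $\epsilon(g) = 1$ and $\Delta(g) = g \otimes g$ on the grouplike basis of $k[G]$, while the algebra condition $\rho(ab) = \rho(a)\rho(b)$ on homogeneous elements $a \in A_g$, $b \in A_h$ reduces to $(a \otimes g)(b \otimes h) = ab \otimes gh$, which is exactly the statement that $A_g \cdot A_h \subseteq A_{gh}$. For the reverse direction, the key step is to show that $A = \bigoplus_{g} A_g$. Since $\{g\}_{g \in G}$ is a $k$-basis of $k[G]$, every element $a \in A$ has a unique finite expansion $\rho(a) = \sum_{g \in G} a_g \otimes g$ with $a_g \in A$. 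Applying coassociativity
\[
\sum_{g} \rho(a_g) \otimes g = (\rho \otimes \id)\rho(a) = (\id \otimes \Delta)\rho(a) = \sum_{g} a_g \otimes g \otimes g
\]
and comparing the coefficients of $g$ in the third tensor factor forces $\rho(a_g) = a_g \otimes g$, so $a_g \in A_g$. The counit axiom gives $a = \sum_g a_g$, proving surjectivity of $\bigoplus_g A_g \to A$; injectivity follows because the $g$'s are $k$-linearly independent in $k[G]$, so a nontrivial relation $\sum a_g = 0$ with $a_g \in A_g$ would force each $a_g \otimes g = 0$ via $\rho$.

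Once the decomposition is in hand, the multiplicative compatibility $A_g \cdot A_h \subseteq A_{gh}$ is read off from $\rho$ being an algebra map: for $a \in A_g$, $b \in A_h$, one has $\rho(ab) = (a \otimes g)(b \otimes h) = ab \otimes gh$. The two constructions are visibly inverse: starting from a grading, recovering $A_g$ as the $g$-isotypic component of the associated $\rho$ returns the original summand, and starting from $\rho$ and reassembling the grading into a comodule map returns $\rho$ by $k$-linearity and the defining property of $A_g$.

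The main obstacle is the decomposition argument in the reverse direction: nothing a priori guarantees that an abstract comodule structure factors through homogeneous pieces, and the proof crucially uses both that $k[G]$ has a $k$-basis of grouplike elements (so the expansion coefficients $a_g$ are unambiguously defined) and that these grouplikes are linearly independent (so the direct sum is internal). Once this is established, the algebra compatibility and inverse checks are routine.
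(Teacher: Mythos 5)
Your proof is correct and takes essentially the same route as the paper: the paper states this lemma without proof, citing Cohen--Montgomery, but its proof of the module-level generalization (Lemma \ref{decomposition_and_comodule}) is exactly your argument, with your components $a_g$ being the images of the paper's orthogonal idempotents $p_g$ and the same coassociativity/counitality computation giving $p_g\circ p_h=\delta_{g,h}\,p_g$ and $\sum_g p_g=1_M$. You additionally verify the multiplicative compatibility $A_g\cdot A_h\subseteq A_{gh}$, which is the extra content needed for the algebra version; the only detail left implicit is the matching of units, $1_A\in A_e$ versus $\rho(1_A)=1_A\otimes 1_G$.
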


We pursue this observation and define graded $k$-linear categories as
follows. 

\begin{definition}
 Let $I$ be a small category. An $I$-grading on a $k$-linear category
 $A$ is a structure of comodule category on $A$ over the coalgebra
 category $I\otimes k$ generated by $I$.
\end{definition}

Undefined terminologies appearing in the above ``definition'' will be
explained in \S\ref{preliminaries}. This definition extends immediately
to categories enriched over more general symmetric monoidal
categories. A precise definition and basic properties of graded
categories will be given in \S\ref{graded_category} in the general
context of categories enriched over a symmetric monoidal category.

It turns out this comodule approach of Cohen and Montgomery is
appropriate for generalizing the Grothendieck construction and gradings
for enriched categories. The Grothendieck construction should be
regarded as a process of forming a comodule category from a diagram of
categories. 

As a byproduct, we also obtain a simple characterization of 
categories enriched over a symmetric monoidal category in terms of
comodules. 

\begin{definition}
 Let $\bm{V}$ be a symmetric monoidal category satisfying a certain mild
 condition and $S$ be a set. A
 category enriched over $\bm{V}$ with the set of objects $S$  is a
 monoid object in the monoidal category of $S$-$S$-bicomodules.
\end{definition}

The definition of $S$-$S$-bicomodule and the monoidal structure on the
category of $S$-$S$-bicomodules will be given in
\S\ref{enrichment_by_comodule}, including related definitions.

\subsection{Aim and Scope}

The aim of this article, therefore, is the following:
\begin{itemize}
 \item We define a grading of an enriched category by a small category
       $I$ in terms of comodule structures over a coalgebra category and
       investigate basic properties of the $2$-category of $I$-graded
       categories.   
 \item We extend the definition of the Grothendieck construction to
       enriched categories as a $2$-functor
       \[
       \Gr: \lOplax(I,\categories{\bm{V}}) \longrightarrow
       \lcats{\bm{V}}_I,
       \]
       where $\lOplax(I,\categories{\bm{V}})$ is the
       $2$-category of oplax functors and left transformations from $I$
       to $\bm{V}$-enriched categories and $\lcats{\bm{V}}_I$ is the
       $2$-category of left $I$-graded categories.
 \item We show that $\Gr$ has a right adjoint
       \[
       \lGamma : \lcats{\bm{V}}_I \longrightarrow
       \lOplax(I,\categories{\bm{V}}).
       \]
       We also study right versions. In particular we define $2$-functors
       \begin{eqnarray*}
	\Gr & : & \rLax(I^{\op},\categories{\bm{V}}) \longrightarrow
	 \rcats{\bm{V}}_I \\
	\rGamma & : & \rcats{\bm{V}}_I \longrightarrow
	 \rLax(I^{\op},\categories{\bm{V}}). 
       \end{eqnarray*}
 \item We extend the notion of (pre)fibered and (pre)cofibered
       categories by Grothendieck and graded (pre)fibered categories by
       Lowen \cite{Lowen08} to enriched graded (pre)fibered  and
       (pre)cofibered categories. We define $2$-functors
       \begin{eqnarray*}
	\Gamma_{\cof} & : & \category{Precofibered}(I)
	 \longrightarrow \lcats{\bm{V}}_I \\
	\Gamma_{\fib} & : & \category{Prefibered}(I)
	 \longrightarrow \rcats{\bm{V}}_I
       \end{eqnarray*}
       and investigate their relations to $\lGamma$ and $\rGamma$.
\end{itemize}

\subsection{Organization}

The paper is written as follows:
\begin{itemize}
 \item We set up a categorical framework in \S\ref{preliminaries}. After
       a brief summary on monoidal categories in
       \S\ref{monoidal_category}, we define and study 
       comonoids and comodules over them in \S\ref{comodule}.  We
       describe the standard definition of enriched categories in
       \S\ref{enriched_category}. $2$-categorical
       notions used in this paper are recalled in \S\ref{2-category}. We
       introduce notions of coalgebra categories and comodule
       categories over a coalgebra category and investigate their
       properties in \S\ref{comodule_category}.

 \item The Grothendieck construction for diagrams of enriched categories
       is defined in \S\ref{definitions}. The construction is given in
       \S\ref{definition_for_oplax_functor}.  We introduce the
       $2$-category of graded categories in 
       \S\ref{graded_category} and the Grothendieck construction is
       extended into a $2$-functor taking values in the $2$-category of  
       graded categories in \S\ref{Gr_as_2-functor}.

 \item \S\ref{comma_category} is devoted to definitions of taking
       ``fibers'' of graded enriched categories. We define three ways to
       take fibers over an object in the grading category in 
       \S\ref{enriched_fiber}. The notions of prefibered, precofibered,
       fibered, and cofibered categories are extended to enriched
       categories in \S\ref{fibered_category}. 

 \item In \S\ref{adjunctions}, we prove several properties of the
       Grothendieck construction.  We show that the comma category
       constructions defined in the previous section can be regarded as
       an extension of the smash product construction and prove that it
       is right adjoint to the Grothendieck construction. A main result
       is Theorem \ref{Gr_and_Gamma_are_adjoint}. We also study
       relations between $\lGamma$ and $\Gamma_{\cof}$ in
       \S\ref{fibered}. 


 \item We include four appendices. We define categories enriched over
       a symmetric monoidal category in terms of comodules in
       \S\ref{enrichment_by_comodule}. \S\ref{left_adjoint_to_diagonal}
       is an enriched version of a well-known fact that the Grothendieck
       construction can be regarded as a $2$-colimit. We specialize the
       contents of \S\ref{definitions} to the case of categories
       enriched over a symmetric monoidal category whose unit object is
       terminal and the tensor product is given by the product in
       \S\ref{product_type}. Constructions corresponding to those in
       \S\ref{adjunctions} are described in \S\ref{comma_for_enriched}.       
\end{itemize}

\subsection*{Acknowledgements}

This work was initiated by a talk delivered by Hideto Asashiba at
Shinshu University in March 2009. The idea of extending the
Grothendieck construction and the notion of graded categories arose
during the e-mail correspondences with him thereafter. The author is
grateful for his intriguing talk, interests, and helpful comments. In
particular, the use of comodules, which is central in this paper, was
suggested by him. 

\section{Categorical Preliminaries}
\label{preliminaries}

\subsection{Monoidal Categories}
\label{monoidal_category}

Throughout this paper, we fix a symmetric monoidal category $\bm{V}$ and
work in $\bm{V}$. Before we begin our discussion, let us briefly
summarize basic definitions and properties of monoidal categories. A
convenient summary is the appendix of \cite{0711.1402}, for example.

\begin{definition}
 \label{monoidal_definition}
 Let $\bm{V}$ be a category. A monoidal structure on $\bm{V}$
 is a collection of the following data:
 \begin{description}
  \item[($1$)] an object $1$ of $\bm{V}$,
  \item[($\bullet\otimes\bullet$)] a (covariant) functor
	     \[
	     \otimes : \bm{V}\times\bm{V} \longrightarrow \bm{V},
	     \]
 \item[($\bullet\otimes\bullet\otimes\bullet$)] a natural isomorphism
	     \[
	      a_{A,B,C} : A\otimes(B\otimes C) \longrightarrow
	      (A\otimes B)\otimes C
	     \]
	    called the associator,
  \item[($1\otimes\bullet$)] a natural isomorphism
	     \[
	      \ell_A : 1\otimes A \longrightarrow A,
	     \]
  \item[($\bullet\otimes 1$)] a natural isomorphism
	     \[
	      r_A : A\otimes 1 \longrightarrow A.
	     \]
 \end{description}
 They are required to satisfy the following three conditions:
 \begin{description}
  \item[($\bullet\otimes\bullet\otimes\bullet\otimes\bullet$)] For any
	     objects $A, B, C, D$ in $\bm{V}$, the following diagram is
	     commutative
	     \[
	      \begin{diagram}
	       \node{} \node{A\otimes(B\otimes(C\otimes D))}
	       \arrow{se,t}{1\otimes a}
	       \arrow{sw,t}{a} \node{} \\ 
	       \node{(A\otimes B)\otimes(C\otimes D)} \arrow{s,l}{a} \node{}
	       \node{A\otimes((B\otimes C)\otimes D)} \arrow{s,r}{a} \\
	       \node{((A\otimes B)\otimes C)\otimes D} \node{}
	       \node{(A\otimes(B\otimes C))\otimes D}
	       \arrow[2]{w,t}{a\otimes 1}
	      \end{diagram}
	     \]
  \item[($\bullet\otimes\bullet\otimes 1$)] For any objects $A, B$ in
	     $\bm{V}$, the following diagram is commutative 
	     \[
	      \begin{diagram}
	       \node{A\otimes(B\otimes 1)} \arrow{e,t}{1\otimes r}
	       \arrow{s,l}{a} 
	       \node{A\otimes B} \\ 
	       \node{(A\otimes B)\otimes 1} \arrow{ne,b}{r}
	      \end{diagram}
	     \]
  \item[($1\otimes 1$)] $\ell_1 = r_1 : 1\otimes 1 \longrightarrow 1$.
 \end{description}
\end{definition}

\begin{remark}
 The commutativity of the following diagrams follows from the above
 axioms, according to \cite{Kelly64-1}.
 \begin{description}
  \item[($1\otimes\bullet\otimes\bullet$)] For any objects $A, B$ in
	     $\bm{V}$, the following diagram is commutative 
	     \[
	      \begin{diagram}
	       \node{1\otimes(A\otimes B)} \arrow{e,t}{\ell}
	       \arrow{s,l}{a} \node{A\otimes B} \\
	       \node{(1\otimes A)\otimes B.} \arrow{ne,b}{\ell}
	      \end{diagram}
	     \]
  \item[($\bullet\otimes 1\otimes\bullet$)] For any objects $A, B$ in
	     $\bm{V}$, the following diagram is commutative 
	     \[
	      \begin{diagram}
	       \node{A\otimes(1\otimes B)} \arrow{e,t}{1\otimes\ell}
	       \arrow{s,r}{a} 
	       \node{A\otimes B} \\ 
	       \node{(A\otimes 1)\otimes B.} \arrow{ne,b}{r\otimes 1}
	      \end{diagram}
	     \]
 \end{description}
\end{remark}

Our primary examples are the category $\Sets$ of sets, the category
$\lMod{k}$ of $k$-modules over a commutative ring $k$, the category
$C(k)$ of chain complexes over $k$, the category $\Spaces$ of
topological spaces, the category $\Sets^{\Delta^{\op}}$ of simplicial
sets, and the category $\Categories$ of small categories. We may also
consider one of models of modern category $\Spectra$ of spectra (in the
sense of algebraic topology)
\cite{EKMM,Hovey-Shipley-Smith00,Mandell-May02}.  All these 
categories have coproducts. 

\begin{definition}
 \label{monoidal_category_with_coproducts}
 We say a monoidal category $(\bm{V},\otimes, 1)$ is a monoidal category
 with coproducts if $\bm{V}$ is closed under coproducts and $\otimes$
 distributes with respect to coproducts. 
\end{definition}

$\lMod{k}$ and $C(k)$ are different from $\Sets$, $\Spaces$, and
$\Categories$.  

\begin{definition}
 \label{additive_monoidal_category}
 A monoidal category $(\bm{V},\otimes, 1)$ is an additive monoidal
 category if
 \begin{enumerate}
  \item $\bm{V}$ has a structure of additive category, i.e.\ it has a
	$0$-object, has finite coproducts, the set of
	morphisms $\Mor_{\bm{V}}(A,B)$ from $A$ to $B$ has a structure
	of Abelian group, and the composition of morphisms is bilinear.
  \item $\bm{V}$ is a monoidal category with finite coproducts,
  \item the monoidal structure
	\[
	 \otimes : \Mor_{\bm{V}}(A,B)\times \Mor_{\bm{V}}(C,D)
	\longrightarrow \Mor_{\bm{V}}(A\otimes C,B\otimes D)
	\]
	is bilinear.
 \end{enumerate}

 An additive monoidal category $\bm{V}$ is called an Abelian monoidal
 category if it is an Abelian category.
\end{definition}

\begin{example}
 $\lMod{k}$ and $C(k)$ are Abelian monoidal categories.
\end{example}

The remaining examples but $\Spectra$ have the following structure.

\begin{definition}
 \label{product_type_definition}
 We say a monoidal category is of product type if the unit object $1$ is
 terminal and $\otimes$ is given by the categorical product.
\end{definition}

\begin{example}
 $\Sets$, $\Spaces$, $\Sets^{\Delta^{\op}}$, and $\Categories$ are of
 product type.
\end{example}

The category $\Spectra$ of spectra is neither Abelian nor of product
type. But it can be fit into our work by replacing isomorphisms by weak
equivalences. See Remark \ref{decompostion_for_stable_model_category}.

All of our monoidal categories are symmetric.

\begin{definition}
 Let $\bm{V}$ be a monoidal category. A switching operation on $\bm{V}$
 is a natural transformation
 \[
 t_{A,B} : A\otimes B \longrightarrow B\otimes A,
 \]
 i.e.\ the diagram
 \[
  \begin{diagram}
   \node{A\otimes B} \arrow{e,t}{t_{A,B}} \arrow{s,l}{f\otimes g}
   \node{B\otimes A} \arrow{s,r}{g\otimes f} \\
   \node{C\otimes D} \arrow{e,t}{t_{C,D}} \node{D\otimes C}
  \end{diagram}
 \]
 is commutative for any morphisms $f$ and $g$.
\end{definition}

\begin{definition}
 A monoidal category $\bm{V}$ with a switching operation $t$ is called a
 symmetric monoidal category if
 \begin{enumerate}
  \item $t_{A,B}\circ t_{B,A}=1$ for any objects $A, B$.
  \item The following diagram is commutative for any
	 triple of objects $A,B, C$:
	 \[
	 \begin{diagram}
	  \node{} \node{A\otimes (B\otimes C)}
	  \arrow{sw,t}{t_{A,B\otimes C}} \arrow{se,t}{a_{A,B,C}}
	  \node{} \\ 
	  \node{(B\otimes C)\otimes A} \node{} 
	  \node{(A\otimes B)\otimes C}
	  \arrow{s,r}{t_{A, B}\otimes 1} \\
	  \node{B\otimes (C\otimes A)} \arrow{n,l}{a_{B,C,A}}
	  \node{} \node{(B\otimes A)\otimes C}
	  \arrow{sw,b}{a_{B,A,C}} \\ 
	  \node{} \node{B\otimes (A\otimes C)} \arrow{nw,b}{1\otimes
	  t_{A,C}} 
	  \node{} 
	 \end{diagram}
	 \]
 \end{enumerate}
\end{definition}

We often have a left adjoint to the ``underlying set''
functor. 

\begin{definition}
 Let $(\bm{V},\otimes, 1)$ be a monoidal category. The functor
 \[
  \Mor_{\bm{V}}(1,-) : \bm{V} \longrightarrow \Sets
 \]
 is called the ``underlying set'' functor.
\end{definition}

For example, the forgetful functor from $\lMod{k}$ to $\Sets$ can be
expressed as $\Mor_{\lMod{k}}(k,-)$ and $k$ is the unit object in
$\lMod{k}$. In this case we have a left adjoint 
\[
 (-)\otimes k : \Sets \longrightarrow \lMod{k}
\]
which assigns to each set $S$ the free module generated by $S$.

We require this property as a fundamental assumption on our
symmetric monoidal category $\bm{V}$.

\begin{assumption}
 \label{condition_on_V}
 In the rest of this paper, we assume $\bm{V}$ is a symmetric monoidal
 category with coproducts satisfying the following conditions.
 \begin{enumerate}
  \item $\bm{V}$ is closed under finite limits.
  \item The ``underlying set'' functor 
	\[
	\Mor_{\bm{V}}(1,-) : \bm{V} \longrightarrow \Sets
	\]
	has a left adjoint
	\[
	(-)\otimes 1 : \Sets \longrightarrow \bm{V}.
	\]
 \end{enumerate}
\end{assumption}

\begin{example}
 The categories $\Sets$, $\Sets^{\Delta^{\op}}$, $\Spaces$,
 $\lMod{k}$, $C(k)$, $\Categories$ and $\Spectra$ all satisfy the above 
 conditions. 
\end{example}

A monoidal category can be regarded as a $2$-category with a single
object. When we talk about functors between monoidal categories, we need
a notion of lax monoidal functor.

\begin{definition}
 \label{lax_monoidal_functor_definition}
 Let $(\bm{C},\otimes, 1_{\bm{C}})$ and $(\bm{D},\otimes, 1_{\bm{D}})$
 be monoidal categories. 
 \begin{enumerate}
  \item A lax monoidal functor\index{lax monoidal functor}
	\index{monoidal functor!lax ---} is a triple
	$(F,\mu, \eta)$, where 
  \begin{itemize}
   \item $F : \bm{C} \to \bm{D}$ is a functor,
  \item $\mu_{x,y} : F(x)\otimes F(y) \to F(x\otimes y)$ is a
	natural transformation,
  \item $\eta : 1_{\bm{D}} \to F(1_{\bm{C}})$ is a morphism in
	$\bm{D}$, 
  \end{itemize}
 which make the following diagrams commutative:
 \begin{enumerate}
  \item \[
	\begin{diagram}
	 \node{} \node{(F(x)\otimes F(y))\otimes F(z)}
	 \arrow{sw,t}{\mu\otimes 1} \arrow{se,t}{a}
	 \node{} \\ 
	 \node{F(x\otimes y)\otimes F(z)} \arrow{s,l}{\mu} \node{} 
	 \node{F(x)\otimes (F(y)\otimes F(z))}
	 \arrow{s,r}{1\otimes \mu} \\
	 \node{F((x\otimes y)\otimes z)} \arrow{se,b}{F(a)} \node{}
	 \node{F(x)\otimes F(y\otimes z)} 
	 \arrow{sw,b}{\mu} \\ 
	 \node{} \node{F(x\otimes (y\otimes z))} \node{}
	\end{diagram}
	\]
  \item \[
	\begin{diagram}
	 \node{1_{\bm{D}}\otimes F(x)} \arrow{s} \arrow{e}
	 \node{F(1_{\bm{C}})\otimes F(x)} \arrow{s} \\
	 \node{F(x)} \node{F(1_{\bm{C}}\otimes x)} \arrow{w}
	\end{diagram}
	\]
  \item \[
	\begin{diagram}
	 \node{F(x)\otimes 1_{\bm{D}}} \arrow{e} \arrow{s}
	 \node{F(x)\otimes F(1_{\bm{C}})} \arrow{s} \\
	 \node{F(x)} \node{F(x\otimes 1_{\bm{C}})} \arrow{w}
	\end{diagram}
	\]
 \end{enumerate}
  \item An oplax monoidal functor is a
	triple $(F,\mu,\eta)$, where
	\begin{enumerate}
	 \item $F : \bm{C} \to \bm{D}$ is a functor;
	 \item $\mu_{x,y} : F(x\otimes y) \to F(x)\otimes F(y)$ is a
	       natural transformation;
	 \item $\eta : F(1_{\bm{C}}) \to 1_{\bm{D}}$ is a morphism in
	       $\bm{D}$, 
	\end{enumerate}
	which make the following diagrams are commutative:
	\begin{enumerate}
	 \item \[
		\begin{diagram}
		 \node{} \node{(F(x)\otimes F(y))\otimes F(z)}		 
		 \arrow{se,t}{a} \node{} \\ 
		 \node{F(x\otimes y)\otimes F(z)}
		 \arrow{ne,t}{\mu\otimes 1} \node{} 
		 \node{F(x)\otimes (F(y)\otimes F(z))} 
		 \\
		 \node{F((x\otimes y)\otimes z)}
		 \arrow{n,l}{\mu} \arrow{se,b}{F(a)} \node{}
		 \node{F(x)\otimes F(y\otimes z)} \arrow{n,r}{1\otimes \mu} 
		 \\ 
		 \node{} \node{F(x\otimes (y\otimes z))}
		 \arrow{ne,b}{\mu} \node{}
		\end{diagram}
	       \]
	 \item \[
		\begin{diagram}
		 \node{1_{\bm{D}}\otimes F(x)} \arrow{s} 
		 \node{F(1_{\bm{C}})\otimes F(x)} \arrow{w} \\
		 \node{F(x)} \node{F(1_{\bm{C}}\otimes x)} \arrow{w} \arrow{n} 
		\end{diagram}
	       \]
	 \item \[
		\begin{diagram}
		 \node{F(x)\otimes 1_{\bm{D}}} \arrow{s}
		 \node{F(x)\otimes F(1_{\bm{C}})} \arrow{w} \\
		 \node{F(x)} \node{F(x\otimes 1_{\bm{C}})} \arrow{w} \arrow{n} 
		\end{diagram}
	       \]
	\end{enumerate}
 \end{enumerate}

\end{definition}

\subsection{Comonoids, Comodules, and Coproduct Decompositions}
\label{comodule}

In this section, we introduce and study the notion of comonoids in a
symmetric monoidal category and comodules over them, which play central
roles in this paper.

Let $\bm{V}$ be a symmetric monoidal category satisfying the conditions
in Assumption \ref{condition_on_V}.

\begin{definition}
 A comonoid object in $\bm{V}$ is an object $C$ equipped with morphisms
 \begin{eqnarray*}
  \Delta & : & C \longrightarrow C\otimes C \\
  \varepsilon & : & C \longrightarrow 1
 \end{eqnarray*}
 making the following diagrams commutative
 \[
 \begin{diagram}
  \node{} \node{C} \arrow{s,r}{\Delta}
  \arrow{sw,t}{\cong} \arrow{se,t}{\cong}
  \\ 
  \node{1\otimes C}
  \node{C\otimes  C}
  \arrow{w,t}{\varepsilon\otimes 1}
  \arrow{e,t}{1\otimes \varepsilon}
  \node{C\otimes 1}
 \end{diagram}
 \]
 \[
  \begin{diagram}
   \node{C} \arrow{e,t}{\Delta} \arrow[2]{s,l}{\Delta}
   \node{C\otimes C} 
   \arrow{s,r}{\Delta\otimes 1} \\ 
   \node{} \node{(C\otimes C)\otimes C}
   \\ 
   \node{C\otimes C} \arrow{e,t}{1\otimes \Delta}
   \node{C\otimes(C\otimes C),} \arrow{n,r}{a} 
  \end{diagram}
 \]
 where $a$ is the associator in $\bm{V}$.

 The morphisms $\Delta$ and $\varepsilon$ are called the coproduct and
 the counit of $C$, respectively.

 Morphisms of comonoids are defined in an obvious way. The category of
 comonoids in $\bm{V}$ is denoted by $\category{Comonoids}(\bm{V})$.
\end{definition}

\begin{example}
 \label{everybody_is_comonoid}
 When $1$ is terminal and $\otimes$ is the product, i.e.\ when $\bm{V}$
 is of product type (Definition \ref{product_type_definition}), any
 object in $\bm{V}$ has a canonical comonoid structure. 
\end{example}

A typical example of such a monoidal category is the category of
sets. Under our assumption on $\bm{V}$ (Assumption
\ref{condition_on_V}), we can compare the category of 
sets and $\bm{V}$ by the ``free object'' functor 
\[
 (-)\otimes 1 : \Sets \longrightarrow \bm{V},
\]
which is left adjoint to the underlying set functor.

\begin{lemma}
 \label{otimes1_is_lax_monoidal}
 $(-)\otimes 1$ is an oplax monoidal functor. 
\end{lemma}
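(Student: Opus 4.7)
The plan is to construct explicit oplax structure morphisms $\mu$ and $\eta$ from the adjunction and then verify the three coherence diagrams; the cleanest route is via \emph{mates}. First, I would observe that the right adjoint $R = \Mor_{\bm{V}}(1,-) : \bm{V} \to \Sets$ carries a canonical \emph{lax} monoidal structure: the multiplication
\[
\mu^R_{A,B} : \Mor_{\bm{V}}(1,A) \times \Mor_{\bm{V}}(1,B) \longrightarrow \Mor_{\bm{V}}(1, A\otimes B)
\]
sends $(f,g)$ to $(f\otimes g) \circ r_1^{-1}$, where $r_1 : 1\otimes 1 \to 1$ is the right unitor, and the unit $\eta^R : \ast \to \Mor_{\bm{V}}(1,1)$ picks out $\id_1$. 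The lax monoidal axioms for $R$ follow directly from the pentagon, the triangle, the $(1 \otimes 1)$-axiom $\ell_1 = r_1$, and naturality of the associator and unitors in $\bm{V}$.

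Given this, one obtains an oplax structure on $L = (-)\otimes 1$ by taking mates. Define $\mu^L_{S,T} : (S\times T)\otimes 1 \to (S\otimes 1)\otimes (T\otimes 1)$ to be the adjoint transpose of the composite
\[
S \times T \xrightarrow{u_S \times u_T} R(L(S)) \times R(L(T)) \xrightarrow{\mu^R_{L(S),L(T)}} R(L(S) \otimes L(T)),
\]
where $u$ is the unit of $L \dashv R$; similarly, define $\eta^L : L(\ast) \to 1$ as the adjoint transpose of $\eta^R : \ast \to R(1)$. Naturality of $\mu^L$ in $S$ and $T$ is then immediate from naturality of $u$, of $\mu^R$, and of the adjunction bijection.

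The remaining content is to verify the three oplax coherence diagrams for $(L, \mu^L, \eta^L)$. By the standard doctrinal-adjunction principle (Kelly), each oplax axiom for $L$ is precisely the mate of the corresponding lax axiom for $R$, so once the lax axioms for $R$ are established, the oplax axioms for $L$ transport automatically across the adjunction. If one prefers a direct verification, each diagram unfolds to a comparison of two morphisms $L(X) \to Y$ in $\bm{V}$, which by the universal property of $L$ reduces to a diagram of maps of sets $X \to R(Y)$, and these in turn reduce to coherence in $\bm{V}$ via the explicit formula for $\mu^R$. I expect the main obstacle in a direct approach to be the bookkeeping of associators and unitors in the associativity pentagon for $\mu^L$; the mate-based argument sidesteps this by packaging all such coherence into the lax structure on $R$, which is why I would take that route.
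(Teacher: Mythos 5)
Your construction of the structure maps is exactly the paper's: the paper defines $\theta_{S,T}$ as the adjoint transpose of the composite $S\times T \to \Mor_{\bm{V}}(1,S\otimes 1)\times\Mor_{\bm{V}}(1,T\otimes 1)\to\Mor_{\bm{V}}(1,(S\otimes 1)\otimes(T\otimes 1))$ and $\eta$ as the transpose of the point picking out $\id_1$, which is precisely your mate of the canonical lax monoidal structure on $\Mor_{\bm{V}}(1,-)$. The paper then simply declares the oplax coherence axioms ``straightforward to check,'' so your appeal to doctrinal adjunction is a cleaner packaging of the same verification rather than a different argument, and the proposal is correct.
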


\begin{proof}
 We need to define a natural transformation
 \[
  \theta : ((-)\times (-))\otimes 1 \Longrightarrow ((-)\otimes 1)\otimes
 ((-)\otimes 1).
 \]
 For sets $S, T$, consider the composition
 \begin{eqnarray*}
  S\times T & \rarrow{} & \Mor_{\bm{V}}(1,S\otimes 1)\times
   \Mor_{\bm{V}}(1, T\otimes 1) \\
  & \rarrow{\otimes} & \Mor_{\bm{V}}(1\otimes 1, (S\otimes 1)\otimes
   (T\otimes 1)) \\
  & \rarrow{\cong} & \Mor_{\bm{V}}(1,(S\otimes 1)\otimes (T\otimes 1)).
 \end{eqnarray*}
 By taking the adjoint, we obtain
 \[
  \theta_{S,T} : (S\times T)\otimes 1 \longrightarrow (S\otimes
 1)\otimes (T\otimes 1). 
 \]
 By taking the left adjoint to the map
 \[
  1_{1} : \{\ast\} \longrightarrow \Mor_{\bm{V}}(1,1)
 \]
 representing the identity morphism on the unit object, we obtain
 \[
  \eta : \{\ast\} \otimes 1 \longrightarrow 1.
 \]
 It is straightforward to check that $\theta$ and $\eta$ satisfy the
 condition for an oplax monoidal functor.
\end{proof}

\begin{example}
 \label{coproduct_on_free_object}
 For any set $S$, $S\otimes 1$ has a comonoid structure defined as
 follows. We have
 \[
  \theta : (S\times S)\otimes 1 \longrightarrow (S\otimes 1)\otimes
 (S\otimes 1)
 \]
 by the above Lemma. By composing with 
 \[
  \Delta\otimes 1 : S\otimes 1 \longrightarrow (S\times S)\otimes 1,
 \]
 we obtain a coproduct
 \[
  \Delta : S\otimes 1 \longrightarrow (S\otimes 1)\otimes (S\otimes 1).
 \]

 The counit is defined by
 \[
  \varepsilon : S\otimes 1 \longrightarrow \{\ast\}\otimes 1
 \rarrow{\eta} 1.
 \]
\end{example}

\begin{definition}
 Let $C$ be a comonoid in $\bm{V}$. A right coaction of $C$ on an object
 $M$ in $\bm{V}$ is a morphism
 \[
  \mu : M \longrightarrow M\otimes C
 \]
 making the following diagrams commutative
 \[
 \begin{diagram}
  \node{} \node{M} \arrow{s,r}{\mu}
  \arrow{sw,t}{\cong} \\ 
  \node{M\otimes 1}
  \node{M\otimes C}
  \arrow{w,t}{1\otimes \varepsilon,}
 \end{diagram}
 \]
 \[
  \begin{diagram}
   \node{M} \arrow{e,t}{\mu} \arrow[2]{s,l}{\mu} \node{M\otimes C}
   \arrow{s,r}{1\otimes \Delta} \\ 
   \node{} \node{M\otimes (C\otimes C)} \arrow{s,r}{a}
   \\ 
   \node{M\otimes C} \arrow{e,t}{\mu\otimes 1}
   \node{(M\otimes C)\otimes C.} 
  \end{diagram}
 \]

 An object $M$ equipped with a right coaction of $C$ is called a right
 $C$-comodule. Morphisms of right comodules are defined in an obvious
 way.

 Left coactions are defined analogously. An object $M$ equipped with both
 right coaction $\mu^R$ and left coaction $\mu^L$ is called a
 $C$-$C$-bimodule if the following diagram is commutative.
 \[
  \begin{diagram}
   \node{M} \arrow[2]{e,t}{\mu^L} \arrow{s,l}{\mu^R} \node{}
   \node{C\otimes M} \arrow{s,r}{1\otimes \mu^L} \\
   \node{M\otimes C} \arrow{se,b}{\mu^L\otimes 1} \node{}
   \node{C\otimes(M\otimes C)} \arrow{sw,b}{a} \\
   \node{} \node{(C\otimes M)\otimes C} \node{}
  \end{diagram}
 \]

 The categories of right $C$-comodules, left $C$-comodules, and
 $C$-$C$-bimodules are denoted by $\rComod{C}$, $\lComod{C}$, and
 $\biComod{C}{C}$, respectively. 
\end{definition}

\begin{example}
 \label{coaction_in_product_type}
 Suppose $\bm{V}$ is of product type. By
 Example \ref{everybody_is_comonoid}, any object $C$ can be regarded as
 a comonoid and any morphism
 \[
  f : M \longrightarrow C
 \]
 is a morphism of comonoids. Define
 \[
  \mu : M \rarrow{\Delta} M\otimes M \rarrow{1\otimes f} M\otimes C,
 \]
 then $\mu$ is a coaction of $C$ on $M$.

 Conversely any coaction $\mu$ is determined by the composition
 \[
  \pi : M \rarrow{\mu} M\otimes C \rarrow{\pr_2} C,
 \]
 since the composition of $\mu$ and the first projection is always the
 identity morphism by the counit condition. 
\end{example}

The following is a direct extension of an observation (Lemma
\ref{Cohen-Montgomery}) by Cohen and Montgomery in
\cite{Cohen-Montgomery84}. 

\begin{lemma}
 \label{decomposition_and_comodule}
 Suppose $\bm{V}=\lMod{k}$ for a commutative ring $k$. Let $M$ be an
 object of $\bm{V}$ and $S$ be a set. Then there is a one-to-one
 correspondence  between right $S\otimes 1$-comodule structures on $M$
 and coproduct decompositions on $M$ indexed by $S$
 \[
  M \cong \bigoplus_{s\in S} M^s.
 \]
\end{lemma}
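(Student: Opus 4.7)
The plan is to unpack what the comonoid structure on $S\otimes 1 = k[S]$ looks like in $\lMod{k}$, translate a coaction into a family of endomorphisms indexed by $S$, and recognize the comodule axioms as the statement that this family is a complete system of orthogonal idempotents.

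First, I would verify from Example \ref{coproduct_on_free_object} that the comonoid structure on $k[S]$ is the familiar ``set-like'' coalgebra: on generators, $\Delta(s)=s\otimes s$ and $\varepsilon(s)=1$, extended $k$-linearly. Given a right coaction $\mu : M \to M\otimes k[S]$, use the canonical isomorphism $M\otimes k[S]\cong \bigoplus_{s\in S} M$ to write
\[
 \mu(m) \;=\; \sum_{s\in S} \pi_s(m)\otimes s,
\]
where each $\pi_s : M \to M$ is $k$-linear and, for each fixed $m$, only finitely many $\pi_s(m)$ are nonzero (since the coproduct lives inside the tensor product).

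Next I would read off the axioms. The counit condition $(1\otimes\varepsilon)\mu = \mathrm{id}_M$ becomes $\sum_s \pi_s = \mathrm{id}_M$ (a locally finite sum). The coassociativity $(1\otimes\Delta)\mu = (\mu\otimes 1)\mu$ becomes, after using linear independence of $\{s\otimes t\}_{s,t\in S}$ in $M\otimes k[S]\otimes k[S]$, the identity $\pi_t\pi_s = \delta_{s,t}\,\pi_s$. Thus $\{\pi_s\}_{s\in S}$ is a complete family of orthogonal $k$-linear idempotents on $M$, and setting $M^s := \pi_s(M) = \{m\in M\mid \mu(m)=m\otimes s\}$ yields the direct sum decomposition
\[
 M \;\cong\; \bigoplus_{s\in S} M^s.
\]

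Conversely, given such a decomposition, let $\pi_s : M\to M^s\hookrightarrow M$ be the associated projections and define $\mu(m):=\sum_s \pi_s(m)\otimes s$; this is a well-defined $k$-linear map into $M\otimes k[S]$ because only finitely many components of $m$ are nonzero, and the idempotent relations together with $\sum_s \pi_s = \mathrm{id}_M$ immediately translate back into the coaction axioms. Finally, I would observe that the two constructions are visibly mutually inverse: the decomposition recovered from $\mu$ reproduces $\mu$ under the second construction, and the coaction recovered from a decomposition reproduces the original projections.

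The only subtlety worth flagging is the finiteness issue: since $\mu(m)$ lies in the direct sum (not the direct product) $M\otimes k[S]$, the sum $\sum_s \pi_s(m)$ is automatically finite, so no convergence hypothesis is needed. Everything else is a routine matter of matching indices, and the argument in fact only uses that $\bm{V}=\lMod{k}$ admits direct sum decompositions of tensor products with free modules, so it would generalize verbatim to any additive monoidal category in which $(-)\otimes 1$ sends a set $S$ to $\bigoplus_S 1$.
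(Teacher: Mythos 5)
Your proof is correct and follows essentially the same route as the paper: in both, a coaction $\mu$ is unpacked into a family of endomorphisms $\pi_s$ (the paper's $p_s$) via the decomposition $M\otimes k[S]\cong\bigoplus_{s}M$, and the comodule axioms are identified with the statement that $\{\pi_s\}$ is a complete orthogonal system of idempotents. The only difference is presentational — you argue element-wise where the paper uses commutative diagrams — and your explicit remarks on local finiteness and on the two constructions being mutually inverse are welcome additions.
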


\begin{proof}
 Suppose we have a coproduct decomposition
 \[
  M \cong \bigoplus_{s\in S} M^s.
 \]

 Define
 \[
 \mu : M \longrightarrow 
 M \otimes(S\otimes 1)
 \]
 on each component $M^s$ by the composition
 \[
  M^s \larrow{\cong} M^s \otimes 1 \rarrow{1\otimes s}
 M^s\otimes (S\otimes 1).
 \]

 The coassociativity follows from the commutativity of the following
 diagram  
 \[
  \begin{diagram}
   \node{M^s} \arrow{e} \arrow{s} \node{M^s\otimes 1} \arrow{s}
   \arrow{e,t}{1\otimes s} 
    \node{M^s\otimes (S\otimes 1)} \arrow{s} \\ 
    \node{M^s\otimes 1} \arrow{e} \arrow{s}
    \node{(M^s\otimes 1)\otimes 1} \arrow{s,r}{(1\otimes s)\otimes 1} 
    \arrow{e,t}{(1\otimes 1)\otimes s}
    \node{(M^s\otimes 1)\otimes (S\otimes 1)}
    \arrow{s,r}{(1\otimes s)\otimes 1} \\ 
    \node{M^s\otimes (1\otimes 1)} \arrow{se,t}{1\otimes (s\otimes 1)}
    \node{(M^s\otimes (S\otimes 1))\otimes 1}
    \arrow{e,t}{(1\otimes 1)\otimes s} \arrow{s} 
   \node{(M^s\otimes (S\otimes 1))\otimes (S\otimes 1)} \arrow{s} \\   
   \node{} 
   \node{M^s\otimes ((S\otimes 1)\otimes 1)}
    \arrow{e,t}{1\otimes (1\otimes s)}
   \node{M^s\otimes ((S\otimes 1)\otimes (S\otimes 1)).} 
  \end{diagram}
 \]
 The counitality is obvious.

 Conversely suppose we have a comodule structure
 \[
  \mu : M \longrightarrow M\otimes (S\otimes 1).
 \]
 For each $s\in S$, define a morphism 
 \[
  p_s : M \longrightarrow M
 \]
 by the composition
 \[
  \begin{diagram}
   \node{M} \arrow[2]{s,=} \arrow{e,t}{\mu}
   \node{M\otimes (S\otimes 1)} \arrow{e,t}{\cong} 
   \node{\bigoplus_{t\in S} M\otimes(t\otimes 1)}
   \arrow{s,r}{\text{projection}} \\ 
   \node{} \node{} \node{M\otimes (s\otimes 1)} \arrow{s,r}{\cong} \\
   \node{M} \arrow[2]{e,t}{p_s} \node{} \node{M.}
  \end{diagram}
 \]
 By the coassociativity of $\mu$ and the fact that the coproduct
 on $S\otimes 1$ is given by the diagonal on $S$, we have
 \[
  p_s\circ p_{t} = \begin{cases}
		     p_s, & s=t \\
		     0, & s\neq t.
		    \end{cases}
 \]
 By the definition of $p_s$ and the counitality of $\mu$, 
 \[
  \sum_{s\in S} p_s = 1_{M}
 \]
 and we have a coproduct decomposition
 \[
  M \cong \bigoplus_{s\in S} \Ima p_s.
 \] 
\end{proof}

The above discussion is based on the fact that $\lMod{k}$ is Abelian.
Even when $\bm{V}$ is not Abelian, we often obtain a coproduct
decomposition from a comodule structure. Suppose $\bm{V}$ is a product
type monoidal category. In such a monoidal category, a
coaction 
\[
 \mu : M \longrightarrow M\otimes (S\otimes 1)
\]
defines and is defined by a morphism
\[
 \pi : M \rarrow{\mu} M\otimes (S\otimes 1) \longrightarrow S\otimes 1 
\]
by Example \ref{coaction_in_product_type}.

\begin{example}
 \label{decomposition_in_product_type}
 Let $\bm{V}$ be the category $\Spaces$ of topological spaces. This is a
 product type monoidal category and thus any object $C$ is a comonoid
 and a coaction of $C$ on another object $M$ determines and is
 determined by a morphism
 \[
  p : M \longrightarrow C
 \]
 When $C$ has a discrete topology, i.e.\ $C=S\otimes 1$ for a set $S$,
 such a continuous map induces a coproduct decomposition
 \[
  M \cong \coprod_{s\in S} M^s
 \]
 by $M^{s} = p^{-1}(s)$.
 We have analogous decompositions when $\bm{V}$ is the category of sets,
 of simplicial sets, or of small categories. 
\end{example}

\begin{remark}
 \label{decompostion_for_stable_model_category}
 When $\bm{V}$ is one of models of symmetric monoidal category of
 spectra, we do not have such a correspondence between comodule
 structures and coproduct decompositions. However, we can obtain
 analogous coproduct decomposition from a comodule structure by
 replacing isomorphisms by weak equivalences.
\end{remark}

The following operation plays an important role.

\begin{definition}
 \label{cotensor_product}
 Let $C$ be a comonoid in $\bm{V}$ and $M$ and $N$ be a right and a left
 $C$-comodules. Define the cotensor product $M\Box_C N$ of $M$ and $N$
 over $C$ by the following equalizer diagram
 \[
 \xymatrix{%
  M\Box_C N \ar[r] & M\otimes N \ar@<1ex>[r]^{\mu_M\otimes 1_N}
 \ar@<-1ex>[r]_{1_M\otimes\mu_N} & M\otimes C\otimes N. 
 }
 \]
\end{definition}

We need the following condition on $C$.

\begin{definition}
 An object $C$ in $\bm{V}$ is called flat if $C\otimes(-)$ preserves
 equalizers. 
\end{definition}

\begin{remark}
 Since $\bm{V}$ is symmetric, if $C$ is flat, $(-)\otimes C$ also
 preserves equalizers.

 The condition that $C\otimes(-)$ preserves equalizers
 is satisfied when $\bm{V}$ is $\Sets$, $\Spaces$,
 $\Sets^{\Delta^{\op}}$ for any $C$ and when $\bm{V}$ is $\lMod{k}$
 and $C$ is flat. In particular, when $\bm{V}=\lMod{k}$ and
 $C=S\otimes 1$, the free $k$-module generated by a set $S$, the
 condition is satisfied.
\end{remark}

\begin{lemma}
 \label{monoidal_category_under_cotensor}
 Let $C$ be a flat comonoid object in $\bm{V}$. For $C$-$C$-bicomodules
 $M$ and $N$, $M\Box_C N$ has a structure of
 $C$-$C$-bicomodule under which the canonical morphism
 \[
  M\Box_C N \rarrow{} M\otimes N
 \]
 is a morphism of bicomodules. Furthermore $\Box_C$ defines a 
 monoidal structure on $\biComod{C}{C}$. The unit object is $C$.
\end{lemma}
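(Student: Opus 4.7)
The plan is to build the $C$-$C$-bicomodule structure on $M\Box_C N$ from the ``unused'' coactions: the left coaction $\mu^L_M\colon M \to C\otimes M$ and the right coaction $\mu^R_N\colon N \to N\otimes C$. First I would show that the composite $M\Box_C N \xrightarrow{\iota} M\otimes N \xrightarrow{\mu^L_M \otimes 1_N} C\otimes M\otimes N$, where $\iota$ is the canonical inclusion, equalizes the two parallel maps $1_C \otimes \mu^R_M \otimes 1_N$ and $1_C \otimes 1_M \otimes \mu^L_N$ into $C\otimes M\otimes C\otimes N$. This equalization uses the bicomodule compatibility of $M$ (to commute $\mu^L_M$ past $\mu^R_M$) together with the defining equalizer property of $\iota$. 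By flatness of $C$, the object $C\otimes(M\Box_C N)$ is itself the equalizer of this same parallel pair, so the composite factors uniquely through a morphism $\mu^L_{M\Box_C N}\colon M\Box_C N \to C\otimes (M\Box_C N)$. A symmetric argument, using flatness of $C$ for the tensor on the right, produces the right coaction $\mu^R_{M\Box_C N}$.

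Next I would verify the coaction axioms and the bicomodule compatibility. Coassociativity and counitality of $\mu^L_{M\Box_C N}$ follow by postcomposing with the inclusion $\iota$ (and, for coassociativity, repeatedly invoking flatness to identify $C^{\otimes n}\otimes(M\Box_C N)$ with the expected equalizer inside $C^{\otimes n}\otimes M\otimes N$) and then using the corresponding axioms for $\mu^L_M$. The bicomodule compatibility of $\mu^L_{M\Box_C N}$ with $\mu^R_{M\Box_C N}$ reduces, after postcomposition with $\iota$, to the bicomodule compatibility on $M$ and on $N$ separately. By construction the canonical morphism $M\Box_C N \to M\otimes N$ is a bicomodule map.

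Finally, for the monoidal structure on $\biComod{C}{C}$, I would construct the unitors and the associator. For the left unitor, the coassociativity of $\mu^L_M$ says that $\mu^L_M\colon M \to C\otimes M$ equalizes $\Delta_C \otimes 1_M$ and $1_C\otimes \mu^L_M$, so it factors through $C\Box_C M$; the map $\varepsilon\otimes 1_M$ restricted from $C\otimes M$ gives the inverse, and the counit axiom for $\mu^L_M$ shows both composites are the identity. The right unitor $M\Box_C C\cong M$ is analogous. For the associator $(L\Box_C M)\Box_C N \cong L\Box_C (M\Box_C N)$, I would identify both sides with a single iterated equalizer $L\Box_C M\Box_C N$ inside $L\otimes M\otimes N$, defined as the simultaneous equalizer of the two pairs of morphisms into $L\otimes C\otimes M\otimes N$ and $L\otimes M\otimes C\otimes N$; flatness of $C$ is exactly what lets $\Box_C$ commute with tensoring on either side, so both iterated cotensors receive canonical isomorphisms from this common limit. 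Once the associator is in place, the pentagon and triangle coherences reduce, via $\iota$, to equalities of morphisms into $K\otimes L\otimes M\otimes N$ (respectively into $M\otimes N$) and follow from coherence in $\bm{V}$ and the universal property of equalizers. The main obstacle I anticipate is precisely this identification of the two iterated cotensors together with the careful bookkeeping of associators of $\bm{V}$.
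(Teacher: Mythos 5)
Your construction of the $C$-$C$-bicomodule structure on $M\Box_C N$ is exactly the paper's argument: the composite $(\mu^L_M\otimes 1_N)\circ\iota$ equalizes the pair defining $C\otimes(M\Box_C N)$, flatness of $C$ (preservation of equalizers by $C\otimes(-)$) identifies $C\otimes(M\Box_C N)$ with that equalizer, so the left coaction factors through; the right coaction is symmetric, and the canonical morphism to $M\otimes N$ is a bicomodule map by construction. Your unitor argument, factoring $\mu^L_M$ through $C\Box_C M$ via coassociativity and inverting with $\varepsilon\otimes 1_M$ via counitality, is also correct and supplies a detail the paper omits.

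The one step that does not go through as written is the associator. To identify $(L\Box_C M)\Box_C N$ and $L\Box_C(M\Box_C N)$ with the joint equalizer inside $L\otimes M\otimes N$, you need $(L\Box_C M)\otimes N$ to be the equalizer of $L\otimes M\otimes N\rightrightarrows L\otimes C\otimes M\otimes N$, i.e.\ you need $(-)\otimes N$ to preserve the equalizer defining $L\Box_C M$ (and dually $L\otimes(-)$ to preserve the one defining $M\Box_C N$). That is a flatness condition on the comodules $N$ and $L$, not on $C$: flatness of $C$ in the paper's sense only says $C\otimes(-)$ preserves equalizers, which is what you used (correctly) for the coactions, but it does not make $\Box_C$ commute with tensoring ``on either side.'' This is a known subtlety for cotensor products; it is harmless over a field, or when the relevant equalizers are split (as in the paper's intended applications, where $C=S\otimes 1$ and the comodules decompose into coproducts indexed by $S$), but it does not follow from flatness of $C$ alone. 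To be fair, the paper's own proof dismisses this point in a single sentence (``the associator of $\otimes$ defines an associator for $\Box_C$'') and does not address it either, so your account is no less rigorous than the original --- but the justification you give for the key identification is attributed to the wrong hypothesis.
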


\begin{proof}
 For $C$-$C$-bicomodules
 \begin{eqnarray*}
  \mu_M^L & : & M \longrightarrow C\otimes M, \\
  \mu_M^R & : & M \longrightarrow M\otimes C, \\
  \mu_N^L & : & N \longrightarrow C\otimes N, \\
  \mu_N^R & : & N \longrightarrow N\otimes C,
 \end{eqnarray*}
 we need to define a bimodule structure on $M\Box_C N$.
 
 Since $C\otimes(-)$ preserves equalizers, we can define a
 left coaction 
 \[
  \mu_M^L\Box_C 1 : M\Box_C N \longrightarrow C\otimes (M\Box_C N)
 \]
 by the following diagram
 \[
 \xymatrix{%
  M\Box_C N \ar[r] \ar@{..>}[d] & M\otimes N \ar[d]_{\mu_M^L\otimes 1_N}
 \ar@<1ex>[r]^{\mu_M^R\otimes 1_N} 
 \ar@<-1ex>[r]_{1_M\otimes\mu^L_N} & M\otimes C\otimes N
 \ar[d]^{\mu_M^L\otimes 1_C\otimes 1_N} \\ 
  C\otimes (M\Box_C N) \ar[r] & C\otimes M\otimes N
 \ar@<1ex>[r]^{1\otimes \mu_M^R\otimes 1_N} 
 \ar@<-1ex>[r]_{1\otimes 1_M\otimes\mu^L_N} & C\otimes M\otimes C\otimes N. 
 }
 \]
 The right comodule structure is defined analogously.

 The associator of $\otimes$ defines an associator for $\Box_{C}$ and
 $(\biComod{C}{C},\Box_{C},C)$ becomes a monoidal category.
\end{proof}

We also need to compare comodules over different comonoids.

\begin{definition}
 \label{morphism_of_comodules}
 Let $C$ and $D$ be comonoids in $\bm{V}$ and $M$ and $N$ be right
 comodules over $C$ and $D$, respectively. A morphism of right comodules
 from $M$ to $N$ 
 \[
  f : M \longrightarrow N
 \]
 is a pair $f=(f_0,f_1)$ morphisms
 \begin{eqnarray*}
  f_0 & : & C \longrightarrow D \\
  f_1 & : & M \longrightarrow M,
 \end{eqnarray*}
 where $f_0$ is a morphism of comonoids and $f_1$ makes the following
 diagram commutative
 \[
  \begin{diagram}
   \node{M} \arrow{s,l}{f_1} \arrow{e} \node{M\otimes C}
   \arrow{s,r}{f_1\otimes f_0} \\ 
   \node{N} \arrow{e} \node{N\otimes D.}
  \end{diagram}
 \]
 Morphisms of left comodules and bicomodules are defined analogously.
\end{definition}

\subsection{Enriched Categories}
\label{enriched_category}

In this section, we collect the standard definitions concerning
categories enriched over a symmetric monoidal
category. We can use the language developed in
\S\ref{comodule} to define (small) enriched categories and related
notions in a very compact way, as is shown in
\S\ref{enrichment_by_comodule}. We have chosen to use the traditional
definitions for the convenience of the reader. Our reference is Kelly's  
book \cite{KellyEnrichedCategory}. The reader is encouraged to compare
definitions in this section and the corresponding definitions in
\S\ref{enrichment_by_comodule}. 

\begin{definition}
 A category enriched over $\bm{V}$, or simply a $\bm{V}$-category $A$
 consists of
 \begin{itemize}
  \item a class of objects $A_0$;
  \item for two objects $a, b$ in $A$, an object
	${A}(a,b)$ in $\bm{V}$;
  \item for three objects $a, b, c$ in $A$, a morphism
	\[
	 \circ : {A}(b,c) \otimes {A}(a,b)
	\longrightarrow {A}(a,c)
	\]
	in $\bm{V}$;
  \item for an object $a$ in $A$, a morphism in $\bm{V}$
	\[
	 1_a : 1 \longrightarrow {A}(a,a)
	\]
 \end{itemize}
  satisfying the following conditions:
 \begin{enumerate}
  \item for any objects $a, b, c, d$, the following diagram is
	commutative 
	\[
	\begin{diagram}
	 \node{(A(c,d)\otimes
	 A(b,c))\otimes A(a, b)}
	 \arrow{s,l}{\circ\otimes 1}
	 \arrow[2]{e,t}{a} \node{}
	 \node{A(c,d)\otimes(A(b,c)\otimes A(a,b))}
	 \arrow{s,r}{1\otimes\circ} \\  
	 \node{A(b,d)\otimes A(a,b)}
	 \arrow{se,b}{\circ} 
	 \node{} \node{A(c,d)\otimes A(a,c)}
	 \arrow{sw,b}{\circ} \\
	 \node{} \node{A(a,d)} \node{}  
	\end{diagram}
	\]

  \item for any objects $a, b$, the following diagram is commutative
	     \[
	      \begin{diagram}
	       \node{A(b,b)\otimes A(a,b)}
	       \arrow{e,t}{\circ} \node{A(a,b)}
	       \node{A(a,b)\otimes A(a,a)}
	       \arrow{w,t}{\circ} \\  
	       \node{1\otimes A(a,b)} \arrow{n} \arrow{ne}
	       \node{} \node{A(a,b)\otimes 1} \arrow{n}
	       \arrow{nw} 
	      \end{diagram}
	     \]
	where $1$ is the unit object in $\bm{V}$.
 \end{enumerate}
\end{definition}

Any enriched category has its underlying category.

\begin{definition}
 Let $A$ be a $\bm{V}$-category. Define an ordinary category
 $\underline{A}$ as follows. Objects are the same as objects in $A$. The 
 set of morphisms from $a \in A_0$ to $b\in A_0$ is defined by
 \[
  \Mor_{\underline{A}}(a,b) = \Mor_{\bm{V}}(1,A(a,b)).
 \]
 The composition is given by
 \begin{eqnarray*}
  \Mor_{\underline{A}}(b,c)\times \Mor_{\underline{A}}(a,b) & = &
   \Mor_{\bm{V}}(1,A(b,c))\times 
   \Mor_{\bm{V}}(1,A(a,b)) \\
  & \longrightarrow & \Mor_{\bm{V}}(1\otimes 1,A(b,c)\otimes
   A(a,b)) \\
  & \larrow{\cong} & \Mor_{\bm{V}}(1,A(b,c)\otimes A(a,b)) \\
  & \rarrow{} & \Mor_{\bm{V}}(1,A(a,c)) \\
  & = & \Mor_{\underline{A}}(a,c).
 \end{eqnarray*}

 When it is obvious from the context, a $\bm{V}$-category and its
 underlying category are denoted by the same symbol.
\end{definition}

\begin{definition}
 Let $A$ be a $\bm{V}$-category. For a morphism in the underlying
 category $f \in \Mor_{\underline{A}}(a,b)$, define morphisms in
 $\bm{V}$
 \begin{eqnarray*}
  f_* & : & A(c,a) \longrightarrow A(c,b), \\
  f^* & : & A(b,c) \longrightarrow A(a,c)
 \end{eqnarray*}
 by the following compositions
 \begin{eqnarray*}
  f_* & : & A(c,a) \cong 1\otimes A(c,a) \rarrow{f\otimes 1}
   A(a,b)\otimes A(c,a) \rarrow{\circ} A(c,b), \\
  f^* & : & A(b,c) \cong A(b,c)\otimes 1 \rarrow{1\otimes f}
   A(b,c)\otimes A(a,b) \rarrow{\circ} A(a,c).
 \end{eqnarray*}
\end{definition}

The following is the standard description of $\bm{V}$-functors.

\begin{definition}
 \label{V-functor}
 Let $A$ and $B$ be small $\bm{V}$-categories. A $\bm{V}$-functor $f$
 from $A$ to $B$ consists of a map
 \[
  f_0 : A_0 \longrightarrow B_0
 \]
 and a morphism
 \[
  f_1 : A(a,b) \longrightarrow B(f(a),f(b))
 \]
 in $\bm{V}$ for each pair $a,b$ of objects in $A$, satisfying the
 following conditions:
 \begin{enumerate}
  \item the following diagram is commutative
	\[
	 \begin{diagram}
	  \node{A(b,c)\otimes {A}(a,b)} \arrow{e} \arrow{s}
	  \node{{A}(a,c)} \arrow{s} \\
	  \node{B(f(b),f(c))\otimes B(f(a),f(b))} \arrow{e}
	  \node{B(f(a),f(c)),} 
	 \end{diagram}
	\]
  \item and the following diagram is commutative
	\[
	 \begin{diagram}
	  \node{1} \arrow{e} \arrow{se} \node{A(a,a)} \arrow{s} \\
	  \node{} \node{B(f(a),f(a)).}
	 \end{diagram}
	\]
 \end{enumerate}
\end{definition}

\begin{definition}
 The category of small $\bm{V}$-categories and $\bm{V}$-functors is
 denoted by $\categories{\bm{V}}$.
\end{definition}

The monoidal structure on $\categories{\bm{V}}$ (Lemma
\ref{monoidal_category_of_V-categories}) can be described as follows. 

\begin{lemma}
 \label{V-categories_is_monoidal}
 For small $\bm{V}$-categories $A$ and $B$, define a $\bm{V}$-category
 $A\otimes B$ by
 \[
  (A\otimes B)_0 = A_0\times B_0
 \]
 and
 \[
  (A\otimes B)((a,b),(a',b')) = A(a,a')\otimes B(b,b').
 \]
 Define a $\bm{V}$-category $1$ with a single object $\ast$ by
 $1(*,*)=1$. 

 If $\bm{V}$ is symmetric monoidal, $(\categories{\bm{V}},\otimes, 1)$
 forms a symmetric monoidal category whose associator is
 defined by that of $\bm{V}$.  
\end{lemma}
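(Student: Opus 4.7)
The plan is to transfer each piece of structure and each coherence axiom from the symmetric monoidal structure on $\bm{V}$ to $\categories{\bm{V}}$, essentially hom-object by hom-object. First I would verify that the proposed $A\otimes B$ is actually a $\bm{V}$-category. The composition
\[
(A\otimes B)((b,b'),(c,c'))\otimes (A\otimes B)((a,a'),(b,b'))\longrightarrow (A\otimes B)((a,a'),(c,c'))
\]
is obtained from the composites in $A$ and $B$ after rearranging the four tensor factors by the symmetry $t$ of $\bm{V}$; explicitly, one uses the ``middle interchange'' $1\otimes t\otimes 1$ together with the associator $a$. The identity $1\to A(a,a)\otimes B(b,b)$ is the tensor of $1_a$ and $1_b$ composed with $\ell_1^{-1}=r_1^{-1}:1\to 1\otimes 1$. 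Associativity and unitality of this composition are diagram chases that reduce, via the coherence theorem (equivalently, the axioms $(\bullet\otimes\bullet\otimes\bullet\otimes\bullet)$, $(\bullet\otimes\bullet\otimes 1)$, and the two derived triangles in the remark), to the corresponding axioms for $A$ and for $B$ separately.

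Next I would check functoriality of $\otimes:\categories{\bm{V}}\times\categories{\bm{V}}\to\categories{\bm{V}}$. Given $\bm{V}$-functors $f:A\to A'$ and $g:B\to B'$ one defines $(f\otimes g)_0=f_0\times g_0$ on objects and $(f\otimes g)_1 = f_1\otimes g_1$ on hom-objects; the two conditions of Definition~\ref{V-functor} follow from the analogous conditions for $f$ and $g$ after sliding the central swap $t$ through the picture. Functoriality in each variable is then immediate from the bifunctoriality of $\otimes$ on $\bm{V}$.

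For the associator and unitors I would define, on objects, the evident bijections $(A_0\times B_0)\times C_0\cong A_0\times(B_0\times C_0)$, $\{*\}\times A_0\cong A_0$, $A_0\times\{*\}\cong A_0$, and, on hom-objects, the associator and unitors of $\bm{V}$ applied componentwise. That these are $\bm{V}$-functors, i.e.\ compatible with composition and units, is exactly the naturality of $a$, $\ell$, $r$ in $\bm{V}$ together with the pentagon and triangle, applied hom-object by hom-object. The pentagon and triangle for $\categories{\bm{V}}$ themselves then reduce, on each hom-object, to the pentagon and triangle in $\bm{V}$. Similarly, the symmetry $t^{\categories{\bm{V}}}_{A,B}:A\otimes B\to B\otimes A$ is defined by the swap of object sets and by $t_{A(a,a'),B(b,b')}$ on hom-objects; the involutivity and hexagon axioms are transferred from the corresponding axioms on $\bm{V}$, and compatibility with composition is where the symmetry of $\bm{V}$ is genuinely needed (it ensures that one can commute the two inner factors past each other).

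The main obstacle, and the only part that is not a mechanical transfer, is the compatibility of the proposed composition in $A\otimes B$ with the associator: one must verify that the composition morphism obtained by grouping as $(A\otimes B)\otimes (A\otimes B)\otimes (A\otimes B)$ in the two possible ways agrees under the associator. This is a large diagram in $\bm{V}$ involving four instances of $t$ and several associators, but it collapses once one applies Mac\,Lane's coherence theorem (or directly the axioms $(\bullet\otimes\bullet\otimes\bullet\otimes\bullet)$ and the hexagon for $t$) to move all swaps and reassociations onto a single chosen normal form. Once this single compatibility is established, every remaining axiom is a routine hom-wise verification.
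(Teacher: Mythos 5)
Your proposal is correct. The paper gives no proof of this lemma at all (it is quoted as a standard fact from Kelly's book, and the comodule-theoretic reformulation in the appendix, Lemma~\ref{monoidal_category_of_V-categories}, is likewise stated without proof), so there is nothing to diverge from; your hom-object-by-hom-object transfer, with the middle interchange $1\otimes t\otimes 1$ defining composition in $A\otimes B$ and Mac\,Lane coherence for symmetric monoidal categories discharging the associativity/unitality diagrams and the functoriality of the associator, is exactly the standard argument and correctly pinpoints where the symmetry (or at least a braiding) of $\bm{V}$ is indispensable.
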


\begin{example}
 When $\bm{V}$ is the category of $k$-modules for a commutative ring
 $k$, $\bm{V}$-categories are called $k$-linear categories and the
 category of small $k$-linear categories is denoted by
 $\categories{k}$.  When $\bm{V}$ is the category of (unbounded) chain
 complexes over a fixed commutative ring $k$, $\bm{V}$-categories are
 called dg (differential graded) categories over $k$.

 When $\bm{V}$ is the category of simplicial sets, topological spaces,
 or spectra (in the sense of algebraic topology), $\bm{V}$-categories
 are called simplicial categories, topological categories, or spectral
 categories. 
\end{example}

\begin{example}
 \label{2-functor_definition}
 Consider the case when $\bm{V}$ is the category $\Categories$ of small
 categories. 
 $\Categories$-enriched categories are usually called (strict)
 $2$-categories. $\Categories$-functors are called $2$-functors. See
 \S\ref{2-category} for more details on $2$-categories.
\end{example}

The following is the standard definition of $\bm{V}$-natural
transformation.

\begin{definition}
 Let
 \[
  f, g : A \longrightarrow B
 \]
 be $\bm{V}$-functors. A $\bm{V}$-natural transformation $\varphi$ from
 $f$ to $g$, denoted by
 \[
  \varphi : f \Longrightarrow g
 \]
 consists of a family of morphisms in $\bm{V}$
 \[
  \varphi(a) : 1 \longrightarrow B(f(a),g(a))
 \]
 indexed by objects in $A$ making the following diagram commutative for
 any pair of objects in $A$:
 \[
  \begin{diagram}
   \node{} \node{A(a,a')} \arrow{se,t}{\ell^{-1}} \arrow{sw,t}{r^{-1}}
   \node{} \\ 
   \node{A(a,a')\otimes 1} \arrow{s,l}{g\otimes\varphi(a)} \node{}
   \node{1\otimes A(a,a')} 
   \arrow{s,r}{\varphi(a')\otimes f} \\ 
   \node{B(g(a),g(a'))\otimes B(f(a),g(a))} \arrow{se,b}{\circ} \node{}
   \node{B(f(a'),g(a'))\otimes B((f(a),f(a')))} \arrow{sw,b}{\circ} \\
   \node{} \node{B(f(a),g(a'))}
  \end{diagram}
 \]
\end{definition}

The composition of $\bm{V}$-natural transformations is defined in an
obvious way.

\begin{definition}
 Let
 \[
  f,g,h : A \longrightarrow B
 \]
 be $\bm{V}$-functors and 
 \[
 \xymatrix{%
  f \ar@{=>}[r]^{\varphi} & g \ar@{=>}[r]^{\psi} & h}
 \]
 be $\bm{V}$-natural transformations. The composition
 \[
  \psi\circ \varphi : f \Longrightarrow h
 \]
 is defined by
 \[
  (\psi\circ\varphi)(a) : 1 \rarrow{\cong} 1\otimes 1
 \rarrow{\psi(a)\otimes \varphi(a)}
 B(g(a),h(a))\otimes B(f(a),g(a)) \rarrow{\circ} B(f(a),h(a)).
 \]
\end{definition}

\subsection{The Language of $2$-Categories}
\label{2-category}

We need the language of $2$-categories in order to fully understand the
role and meanings of the Grothendieck construction. Our reference is
\cite{Street72-2}. 

\begin{definition}
 A (strict) $2$-category $\bm{C}$ is a category enriched over the
 category of small categories. In other words, for each pair of objects
 $x$ and $y$ in $\bm{C}$, ${\bm{C}}(x,y)$ is a category. Objects in 
 ${\bm{C}}(x,y)$ are called $1$-morphisms and morphisms in
 ${\bm{C}}(x,y)$ are called $2$-morphisms.

 The category obtained from a $2$-category $\bm{C}$ by forgetting
 $2$-morphisms is denoted by $\sk_1\bm{C}$.
\end{definition}

\begin{example}
 \label{2-category_of_V-categories}
 $\bm{V}$-categories, $\bm{V}$-functors, and $\bm{V}$-natural
 transformations form a $2$-category $\categories{\bm{V}}$.
\end{example}

\begin{example}
 Any category can be regarded as a $2$-category whose $2$-morphisms are
 identities. 
\end{example}

\begin{example}
 \label{2-category_of_comma_categories}

 Let $B$ be a $\bm{V}$-category. Define a $2$-category
 $\lcats{\bm{V}}\downarrow B$ as 
 follows. Objects are $\bm{V}$-functors
 \[
  \pi : E \longrightarrow B.
 \]
 A left morphism from $\pi : E \to B$ to
 $\pi' : E' \to B$ is a pair $(F,\varphi)$ of a
 $\bm{V}$-functor
 \[
  F : E \longrightarrow E'
 \]
 and a $\bm{V}$-natural transformation
 \[
  \varphi : \pi'\circ F \Longrightarrow \pi.
 \]
 For left morphisms
 \[
  (F,\varphi), (G,\psi) : \pi \longrightarrow \pi',
 \]
 a $2$-morphism from $(F,\varphi)$ to $(G,\psi)$ is a $\bm{V}$-natural
 transformation
 \[
  \xi : F \Longrightarrow G
 \]
 making the following diagram commutative
 \[
 \xymatrix{%
 \pi'\circ F \ar@{=>}_{\varphi}[dr] \ar@{=>}^{\pi'\circ \xi}[rr] & &
 \pi'\circ G \ar@{=>}^{\psi}[dl] \\ 
  & \pi
 }
 \]

 By reversing the direction of $\varphi$ in left morphisms, we obtain
 right morphisms and another $2$-category $\rcats{\bm{V}}\downarrow B$
 with the same objects.
\end{example}

We can define and discuss ``functors'' between $2$-categories. One of
them is $2$-functors in Example \ref{2-functor_definition}. However,
we often encounter ``functors up to natural transformations''. The
notion of lax functor was introduced by Street in \cite{Street72-2} in
order to describe such ``functors''. For the
Grothendieck construction, we need oplax functors from ordinary
categories to $2$-categories.

\begin{definition}
 \label{oplax_functor_definition}
 Let $I$ be a category and $\bm{C}$ be a $2$-category. An oplax functor
 from $I$ to $\bm{C}$ consists of the following:
 \begin{itemize}
  \item a morphism of quivers
	\[
	 F : I \longrightarrow \sk_1\bm{C},
	\]
  \item for each object $i\in I_0$, a $2$-morphism
	\[
	 \eta_i : F(1_i) \Longrightarrow 1_{F(i)},
	\]
  \item for each pair of composable morphisms $i \rarrow{u} i'
	\rarrow{u'} i''$, a $2$-morphism
	\[
	 \theta_{u',u} : F(u'\circ u) \Longrightarrow F(u')\circ F(u),
	\]
 \end{itemize}
 satisfying the following conditions:
 \begin{enumerate}
  \item For any morphism $u : i \to j$ in $I$, the following diagram of
	$2$-morphisms are commutative
	\[
	\xymatrix{%
	F(u)\circ 1_{F(i)} & F(u)\circ F(1_i) \ar@{=>}[l] \\
	F(u) \ar@{=}[u] \ar@{=}[r] & F(u\circ 1_i) \ar@{=>}[u]
	}
	\]
	\[
	\xymatrix{%
	 F(1_j)\circ F(u) \ar@{=>}[r] & 1_{F(j)}\circ F(u) \\
	F(1_j\circ u) \ar@{=>}[u] & F(u) \ar@{=}[u] \ar@{=}[l]
	}
	\]
  \item For each sequence $a \rarrow{u} b \rarrow{v} c \rarrow{w} d$ in
	$I$, the following diagram of natural transformations is
	commutative
	\[
	\xymatrix{%
	F(w\circ v\circ u) \ar@{=>}[d] \ar@{=>}[r] & F(w\circ v)\circ
	F(u) \ar@{=>}[d] \\
	F(w)\circ F(v\circ u) \ar@{=>}[r] & F(w)\circ F(v)\circ F(u).
	}
	\]
 \end{enumerate}
\end{definition}

\begin{remark}
 There seem to be confusions on these terminologies. Our oplax functors
 are called lax functors by Goerss and Jardine
 \cite{Goerss-jardine}. Lax functors in the sense of Street are required
 to ``compose'' functors 
 \[
 \theta_{u',u} : F(u')\circ F(u) \Longrightarrow F(u'\circ u),  
 \]
 with respect to compositions of morphisms in $I$. We follow the
 original terminology in \cite{Street72-2}.
 A precise definition of lax functor can be found in this
 paper of Street's. 
\end{remark}

\begin{definition}
 Let $I$ and $\bm{C}$ be as above. Let
 \[
  X,Y : I \longrightarrow \bm{C}
 \]
 be oplax functors. A left transformation from $X$ to
 $Y$ consists of
 \begin{itemize}
  \item a family of $1$-morphisms
	\[
	 F(i) : X(i) \longrightarrow Y(i)
	\]
	indexed by objects $i\in I_0$,
  \item a family of $2$-morphisms $\varphi(u)$ indexed by morphisms in
	$I$ with
	\[
	 \xymatrix{
	X(i) \ar[r]^{F(i)} \ar[d]_{X(u)} & Y(i) \ar[d]^{Y(u)}
	\ar@{=>}[dl]_{\varphi(u)} \\ 
	X(j) \ar[r]_{F(j)} & Y(j)
	}
	\]
	if $u : i \to j$ in $I$,
 \end{itemize}
 satisfying the following conditions:
 \begin{enumerate}
  \item For any object $i\in I_0$, the following diagram is commutative
	\[
	\xymatrix{%
	Y(1_i)\circ F(i) \ar@{=>}[r] \ar@{=>}[d] & F(i)\circ X(1_i)
	\ar@{=>}[d] \\ 
	1_{Y(i)}\circ F(i) \ar@{=}[r] & F(i)\circ 1_{X(i)}
	}
	\]
  \item  For composable morphisms $i \rarrow{u} j \rarrow{v} k$ in $I$,
	 the following diagram is commutative
	 \[
	 \xymatrix{%
	 Y(v\circ u)\circ F(k) \ar@{=>}[r] \ar@{=>}[d] &  
	 Y(u)\circ Y(v)\circ F(k)  \ar@{=>}[r] &
	 Y(u)\circ F(j)\circ X(v) \ar@{=>}[d] \\
	 F(k)\circ X(v\circ u) 
	 \ar@{=>}[rr] & & 
	 F(k)\circ X(v)\circ X(u).
	 }
	 \]
 \end{enumerate}
 We denote
 \[
 (F,\varphi) : X \longrightarrow Y.
 \]

 For left transformations
 \[
  X \rarrow{(F,\varphi)} Y \rarrow{(G,\psi)} Z,
 \]
 the composition $(G\circ F, \psi\circ\varphi) : X \to Z$ is defined by
 \[
  (G\circ F)(i) = G(i)\circ F(i)
 \]
 for $i\in I_0$ and, for $u : i\to j$  in $I$,
 \[
  (\psi\circ\varphi)(u) : Z(u)\circ (G\circ F)(i) \Longrightarrow
 (G\circ F)(j)\circ X(u)
 \]
 is defined by the composition
 \begin{eqnarray*}
  Z(u)\circ (G\circ F)(i) & = & Z(u)\circ G(i)\circ
   F(i) \\
  & \Longrightarrow & G(j)\circ Y(u)\circ F(i) \\
  & \Longrightarrow & G(j)\circ F(j)\circ X(u) \\
  & = & (G\circ F)(j)\circ X(u).
 \end{eqnarray*}
\end{definition}

The reader might have noticed that the natural transformation
$\varphi(u)$ in the above definition goes in a wrong direction with
respect to $F$. But it is in the right direction with respect to $u$.

\begin{definition}
 Let $X$ and $Y$ be as above. A right transformation from $X$ to $Y$
 consists of a family of functors
 \[
 F(i) :  X(i) \longrightarrow Y(i)
 \]
 indexed by $i\in I_0$ and a family of $2$-morphisms $\varphi(u)$
 \[
 \xymatrix{
 X(i) \ar[r]^{F(i)} \ar[d]_{X(u)} & Y(i) \ar[d]^{Y(u)}
 \\ 
 X(j) \ar@{=>}[ur]^{\varphi(u)} \ar[r]_{F(j)} & Y(j)
 }
 \]
 in $\bm{C}$ satisfying conditions analogous to those of left
 transformations. 
\end{definition}

We may dualize and define left and right transformations for lax
functors. 

\begin{definition}
 Let $I$ and $\bm{C}$ be as above. Let
 \[
  X,Y : I \longrightarrow \bm{C}
 \]
 be lax functors. A left transformation of lax functors from $X$ to $Y$
 consists of 
 \begin{itemize}
  \item a family of $1$-morphisms
	\[
	 F(i) : X(i) \longrightarrow Y(i)
	\]
	indexed by $i\in I_0$,
  \item a family of $2$-morphisms
	\[
	 \varphi(u) :  Y(u)\circ F(i) \Longrightarrow F(j)\circ X(u)
	\]
	for $u : i\to j$,
 \end{itemize}
 satisfying the following conditions:
 \begin{enumerate}
  \item For any object $i\in I_0$, the following diagram is commutative
	\[
	 \xymatrix{
	Y(1_i)\circ F(i) \ar@{=>}[r] \ar@{=>}[d] & F(i)\circ X(1_i) \\
	1_{Y(i)}\circ F(i) \ar@{=>}[u] \ar@{=}[r] & F(i)\circ 1_{X(i)}
	\ar@{=>}[u]. 
	}
	\]
  \item For composable morphisms $i \rarrow{u} j \rarrow{v} k$ in $I$,
	the following diagram is commutative
	\[
	\xymatrix{
	Y(v)\circ F(j)\circ X(u) \ar@{=>}[d] & Y(v)\circ Y(u)\circ F(i)
	\ar@{=>}[r] \ar@{=>}[l] & Y(v\circ 
	u)\circ F(i) \ar@{=>}[d] \\
	F(k)\circ X(v)\circ X(u) \ar@{=>}[rr] & & F(k)\circ X(v\circ u).
	} 
	\]
 \end{enumerate}

 Right transformations of lax functors are defined by reversing the
 direction of $\varphi(u)$ and by changing the conditions accordingly. 
\end{definition}


We have $2$-morphisms.

\begin{definition}
 Let
 \[
  X,Y : I \longrightarrow \bm{C}
 \]
 be oplax functors and
 \[
  (F,\varphi), (G,\psi) : X \longrightarrow Y
 \]
 be left transformations of oplax functors. A morphism of left
 transformations from 
 $(F,\varphi)$ to $(G,\psi)$ is a collection of $2$-morphisms in $\bm{C}$
 \[
  \theta(i) : F(i) \Longrightarrow G(i)
 \]
 indexed by $i\in I_0$ making the following diagram commutative
 \[
 \xymatrix{%
 Y(u)\circ F(i) \ar@{=>}_{\varphi(u)}[d] \ar@{=>}^{\theta(i)}[r] &
 Y(u)\circ G(i) 
 \ar@{=>}^{\psi(u)}[d] \\
 F(j)\circ X(u) \ar@{=>}_{\theta(j)}[r]  &
 G(j)\circ X(u) 
 }
 \]
 for each morphism $u : i \to j$ in $I$.

 The composition of morphisms of left transformations is
 defined by the composition of $2$-morphisms in $\bm{C}$. Morphisms
 between right transformations are defined by reversing arrows
 appropriately. We also have lax versions of left and right
 transformations. 
\end{definition}

Oplax functors and lax functors form $2$-categories but there are two
variations.

\begin{definition}
 Let $I$ be a small category and $\bm{C}$ be a $2$-category. 

 The $2$-category consisting of oplax functors from $I$ to $\bm{C}$,
 left transformations, and morphisms of left transformations
 is denoted by $\lOplax(I,\bm{C})$.
 By using right transformations instead of left transformations, we also 
 obtain a $2$-category, which is denoted by $\rOplax(I,\bm{C})$. 

 Lax functors, left transformations, morphisms of left transformations 
 form a $2$-category and it is denoted by $\lLax(I,\bm{C})$. The
 $2$-category obtained by replacing left transformations by
 right transformations is denoted by $\rLax(I,\bm{C})$.

 By restricting objects to strict functors, we obtain the full
 $2$-subcategories $\lFunct(I,\bm{C})$ and $\rFunct(I,\bm{C})$ whose
 $1$-morphisms are left and right transformations, respectively.
\end{definition}

We have the following diagonal functor.

\begin{definition}
 \label{2-diagonal}
 Define a functor
 \[
  \Delta : \bm{C} \longrightarrow \lOplax(I,\bm{C}) 
 \]
 by 
 \[
  \Delta(X)(i) = X
 \]
 on objects.
\end{definition}

It is useful to have an explicit description of a morphism of oplax
functors from $F$ to $\Delta(A)$.

\begin{lemma}
 Let $A$ be an object of a $2$-category $\bm{C}$ and $X : I \to \bm{C}$ be
 an oplax functor.  A morphism of oplax functors
 \[
 (F,\varphi) : X \longrightarrow \Delta(A) 
 \]
 consists of
 \begin{itemize}
  \item a family of $1$-morphisms
	\[
	 F(i) : X(i) \to A
	\]
	indexed by objects in $I$, and
  \item a family of $2$-isomorphisms
	\[
	 \varphi(u) : F(i) \Longrightarrow F(j)\circ X(u)
	\]
	indexed by morphisms in $I$,
 \end{itemize}
 satisfying the following conditions:
 \begin{enumerate}
  \item For each object $i\in I_0$, 
	\[
	 \eta_i\circ \varphi(1_i) = 1_{F(i)}.
	\]
  \item The following diagram of $2$-morphisms is commutative
	\[
	\xymatrix{%
	F(k) \ar@{=>}[d] \ar@{=>}[r] & F(k)\circ X(v) \ar@{=>}[d] \\
	F(k)\circ X(v\circ u) \ar@{=>}[r] & F(k)\circ
	X(v)\circ X(u). 
	}
	\]
 \end{enumerate}
\end{lemma}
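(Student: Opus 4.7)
The plan is to unfold the definition of a left transformation between oplax functors in the special case where the target is the constant oplax functor $\Delta(A)$, and to observe that drastic simplifications occur because $\Delta(A)$ is strict.

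First, I would record the key simplifying feature of $\Delta(A)$: viewed as an oplax functor, it is strict. Explicitly, $\Delta(A)(i)=A$ for every $i\in I_0$, $\Delta(A)(u)=1_A$ for every morphism $u$ of $I$, and both families of oplax structure $2$-morphisms $\eta_i^{\Delta(A)}$ and $\theta_{v,u}^{\Delta(A)}$ are identity $2$-morphisms, since $1_A\circ 1_A=1_A$. This reduces every diagram in the definition of left transformation with target $\Delta(A)$ to one in which all occurrences of the target-side structure are identities.

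Second, I would specialize the data. The $1$-morphism $F(i)\colon X(i)\to \Delta(A)(i)=A$ in $\bm{C}$ is literally the datum in the lemma. The $2$-morphism $\varphi(u)\colon \Delta(A)(u)\circ F(i)\Longrightarrow F(j)\circ X(u)$ becomes $1_A\circ F(i)\Longrightarrow F(j)\circ X(u)$, which, after absorbing the left unitor of $\bm{C}$, corresponds to a $2$-morphism $F(i)\Longrightarrow F(j)\circ X(u)$, again as stated. I note that the lemma asserts these are $2$-isomorphisms; this invertibility is not automatic from the definition of left transformation and should be understood either as a typographical substitution for ``$2$-morphism'' or as an extra stipulation that passes through the specialization unchanged.

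Third, I would read off the two coherence conditions. The unit diagram in the definition of left transformation, once the identity $2$-morphisms of $\Delta(A)$ are inserted, collapses to the equation that $\varphi(1_i)$ post-composed with $F(i)\circ\eta_i^X$ equals the identity on $F(i)$, which is exactly condition $(1)$. Similarly, the composition diagram collapses to the square in condition $(2)$: the two pastings from $F(i)$ to $F(k)\circ X(v)\circ X(u)$ must agree, one going via $\varphi(v\circ u)$ followed by $F(k)\circ\theta_{v,u}^X$, the other via $\varphi(u)$ followed by $\varphi(v)\circ 1_{X(u)}$. The argument is entirely bookkeeping; the only real care required is tracking the direction of $\varphi(u)$, which points ``against'' $F$ but ``with'' $u$, and correctly absorbing the unitors and associators of $\bm{C}$ when $\Delta(A)$ degenerates.
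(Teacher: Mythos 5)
Your proposal is correct and matches the paper's approach: the paper's entire proof reads ``A direct translation of definition,'' and your unfolding of the left-transformation axioms in the special case where the target $\Delta(A)$ is strict (so all its structure $2$-morphisms are identities) is precisely that translation spelled out. Your side remark that the invertibility of $\varphi(u)$ is not forced by the general definition is a fair observation about the statement itself, not a gap in the argument.
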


\begin{proof}
 A direct translation of definition.
\end{proof}

\begin{example}
 \label{Gamma-invariant_functor}
 Let $G$ be a group. Consider the case $\bm{C}=\categories{k}$. 
 A strict functor
 \[
  X : G \longrightarrow \categories{k}
 \]
 is given by a $k$-linear category $X$ equipped with an action of $G$.
 For a $k$-linear category $Y$, a morphism of oplax functors
 \[
  F : X \longrightarrow \Delta(Y)
 \]
 is given by a functor
 \[
  F(\ast) : X(\ast)=X \longrightarrow \Delta(Y)(\ast) = Y
 \]
 and a a family of natural transformations
 \[
  \varphi(\alpha) : F(\ast) \Longrightarrow F(\ast)\circ X(\alpha)
 \]
 indexed by $\alpha\in G$ making the following diagram commutative
 \[
 \xymatrix{%
 F(\ast) \ar@{=>}[d] \ar@{=>}[r] & F(\ast)\circ X(\beta) \ar@{=>}[d] \\
 F(\ast)\circ X(\alpha\beta) \ar@{=>}[r] & F(\ast)\circ
 X(\alpha)\circ X(\beta). 
 }
 \]
 This is nothing but the definition of right $G$-invariant functor
 in \cite{0807.4706}.
\end{example}

\subsection{Comodule Categories}
\label{comodule_category}

We have seen in \S\ref{comodule} that we often obtain a coproduct
decomposition from a comodule structure, extending the idea of a
characterization of group graded algebras by Cohen and
Montgomery \cite{Cohen-Montgomery84}.

The notion of group graded algebras has been extended to group graded
$k$-linear categories and to $k$-linear categories graded by a small
category. See \cite{math/0312214,0807.4706,Lowen08}, for 
example. In order to extend their definitions to $\bm{V}$-categories
graded by a small category $I$, we introduce and investigate ``many
objectifications'' of comodules.

Recall from Lemma \ref{V-categories_is_monoidal} that the
category of $\bm{V}$-categories has a symmetric monoidal structure.

\begin{definition}
 A coalgebra $\bm{V}$-category is a comonoid object in the monoidal
 category of $\bm{V}$-categories.
\end{definition}

\begin{remark}
 In other words, a coalgebra $\bm{V}$-category is a
 $\bm{V}$-category $C$ equipped with a family of morphisms
 \begin{eqnarray*}
  \Delta_{a,b} : C(a,b) \longrightarrow C(a,b)\otimes C(a,b) \\
  \varepsilon_{a,b} : C(a,b) \longrightarrow 1
 \end{eqnarray*}
 indexed by pairs of objects $a,b\in C_0$ satisfying the following
 conditions:
 \begin{enumerate}
  \item $\Delta_{a,b}$ is compatible with compositions, i.e.\ 
	the following diagram is commutative
	\[
	 \begin{diagram}
	  \node{C(b,c)\otimes C(a,b)} \arrow{e,t}{\circ}
	  \arrow{s,l}{\Delta_{b,c}\otimes\Delta_{a,b}} \node{C(a,c)} 
	  \arrow[2]{s,r}{\Delta_{a,c}} \\ 
	  \node{(C(b,c)\otimes C(b,c))\otimes (C(a,b)\otimes C(a,b))} 
	  \arrow{s,l}{T} \\
	  \node{(C(b,c)\otimes C(a,b))\otimes (C(b,c) \otimes
	  C(a,b))} \arrow{e,t}{\circ \otimes \circ} \node{C(a,c)\otimes
	  C(a,c),} 
	 \end{diagram}
	\]
	where $T$ is an appropriate composition of associators and a
	symmetry operator. (Recall that we assume $\bm{V}$ is symmetric
	monoidal.) 
  \item $\Delta_{a,a}$ preserves identity morphisms, i.e. the following
	diagram is commutative for each object $a\in C_0$ 
	\[
	 \begin{diagram}
	  \node{1} \arrow{s,l}{1_a} \node{1\otimes 1} \arrow{w,t}{\cong}
	  \arrow{s,r}{1_a\otimes 1_a} \\
	  \node{C(a,a)} \arrow{e,t}{\Delta_{a,a}} \node{C(a,a)\otimes
	  C(a,a),} 
	 \end{diagram}
	\]
  \item $\varepsilon_{a,b}$ is a counit for $\Delta_{a,b}$, i.e.\
	the following diagram is commutative
	\[
	 \begin{diagram}
	  \node{} \node{C(a,b)} \arrow{s,r}{\Delta_{a,b}}
	  \arrow{sw,t}{\cong} \arrow{se,t}{\cong}
	  \\ 
	  \node{1\otimes C(a,b)}
	  \node{C(a,b)\otimes  C(a,b)}
	   \arrow{w,t}{\varepsilon_{a,b}\otimes 1}
	  \arrow{e,t}{1\otimes \varepsilon_{a,b}}
	  \node{C(a,b)\otimes 1}
	 \end{diagram}
	\]
  \item $\Delta_{a,b}$ is coassociative,  i.e.\ the following
	diagram is commutative
	\[
	 \begin{diagram}
	  \node{C(a,b)} \arrow{e,t}{\Delta_{a,b}} \arrow[2]{s,l}{\Delta_{a,b}}
	  \node{C(a,b)\otimes C(a,b)}
	  \arrow{s,r}{1\otimes\Delta_{a,b}} \\
	  \node{} \node{C(a,b)\otimes (C(a,b)\otimes
	  C(a,b))} \arrow{s,r}{\cong} \\
	  \node{C(a,b)\otimes C(a,b)}
	  \arrow{e,t}{\Delta_{a,b}\otimes 1}
	  \node{(C(a,b)\otimes C(a,b))\otimes C(a,b).}
	 \end{diagram}
	\]
 \end{enumerate}
 Another way of saying this is that a coalgebra $\bm{V}$-category is a
 category enriched over the category of comonoid objects in $\bm{V}$.
\end{remark}

\begin{example}
 Let $I$ be a small category. Then we have a $\bm{V}$-category
 $I\otimes 1$ by Lemma \ref{free_category}. Each $(I\otimes 1)(i,j)$ has
 a structure of comonoid by Example \ref{coproduct_on_free_object}.
 The comonoid structure is compatible with the compositions of morphisms
 and preserves identities. Thus $I\otimes 1$ is a coalgebra category.
\end{example}

\begin{example}
 \label{coalgebra_in_T}
 When $\bm{V}$ is of product type, any object $C$ has a canonical
 comonoid structure by Example \ref{everybody_is_comonoid}.
 The assumption that the unit object $1$ is terminal guarantees
 the existence of counit morphisms
 \[
  \varepsilon_{a,b} : C(a,b) \longrightarrow 1
 \]
 and we have a coalgebra structure on $C$.
\end{example}

\begin{definition}
 Let $C$ be a coalgebra $\bm{V}$-category. When a
 $\bm{V}$-category $X$ is equipped with a right comodule structure over
 $C$
 \[
  \mu : X \longrightarrow X\otimes C,
 \]
 it is called a right comodule category over $C$.
 Left comodule categories are defined analogously.
\end{definition}

It is convenient to have a more concrete description.

\begin{lemma}
 \label{concrete_comodule_category}
 Let $C$ be a coalgebra $\bm{V}$-category and $X$ be a
 $\bm{V}$-category. A right comodule structure on $X$ consists of 
 \begin{itemize}
  \item a map
	\[
	p : X_0 \longrightarrow C_0
	\]
	between objects, and
  \item a collection of morphisms
	\[
	\mu_{x,y} : X(x,y) \longrightarrow X(x,y)\otimes C(p(x),p(y))
	\]
	indexed by pairs of objects in $X$
 \end{itemize}
 satisfying the following conditions:
 \begin{enumerate}
  \item The following diagram is commutative
	\[
	 \begin{diagram}
	  \node{X(y,z)\otimes X(x,y)} \arrow{e,t}{\circ}
	  \arrow{s,l}{\mu_{y,z}\otimes\mu_{x,y}} \node{X(x,z)} 
	  \arrow[2]{s,r}{\mu_{x,z}} \\ 
	  \node{X(y,z)\otimes (C(p(y),p(z)))\otimes
	  (X(x,y)\otimes C(p(x),p(y)))}  
	  \arrow{s,l}{T} \\
	  \node{(X(y,z) \otimes X(x,y))\otimes
	  (C(p(y),p(z))\otimes C(p(x),p(y)))} \arrow{e,t}{\circ
	  \otimes \circ} 
	  \node{X(x,z)\otimes C(p(x),p(z)),} 
	 \end{diagram}
	\]
	where $T$ is an appropriate composition of associators and a
	symmetry operator.
  \item The following diagram is commutative for each object $x\in X_0$
	\[
	 \begin{diagram}
	  \node{1} \arrow{s,l}{1_{x}}\node{1\otimes 1} \arrow{w,t}{\cong}
	  \arrow{s,r}{1_{x}\otimes 1_p(x)} \\
	  \node{X(x,x)} \arrow{e,t}{\mu_{x,x}} \node{X(x,x)\otimes
	  C(p(x),p(x)).} 
	 \end{diagram}
	\]
  \item The following diagram is commutative
	\[
	 \begin{diagram}
	  \node{} \node{X(x,y)} \arrow{s,r}{\mu_{x,y}}
	  \arrow{sw,t}{\cong} \\ 
	  \node{X(x,y)\otimes 1}
	  \node{X(x,y)\otimes  C(p(x),p(y))}
	   \arrow{w,t}{1\otimes\varepsilon_{x,y}.}
	 \end{diagram}
	\]
  \item $\mu_{x,y}$ is coassociative in the sense that the following
	diagram is commutative
	\[
	 \begin{diagram}
	  \node{X(x,y)} \arrow{e,t}{\mu_{x,y}} \arrow[2]{s,l}{\mu_{x,y}}
	  \node{X(x,y)\otimes C(p(x),p(y))}
	  \arrow{s,r}{\mu_{x,y}\otimes 1} \\
	  \node{} \node{(X(x,y)\otimes
	  C(p(x),p(y))\otimes C(p(x),p(y)} \arrow{s,r}{\cong} \\
	  \node{X(x,y)\otimes C(p(x),p(y))}
	  \arrow{e,t}{1\otimes \Delta_{x,y}}
	  \node{X(x,y)\otimes (C(p(x),p(y))\otimes C(p(x),p(y))).}
	 \end{diagram}
	\]
 \end{enumerate}
\end{lemma}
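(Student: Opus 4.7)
The plan is to unwind the definition of a right comodule category, which is a $\bm{V}$-functor $\mu : X \to X \otimes C$ satisfying the counit and coassociativity axioms, and match its data object-by-object and hom-object-by-hom-object against the claimed structure. First I would examine the object map $\mu_0 : X_0 \to (X \otimes C)_0 = X_0 \times C_0$. Write $\mu_0(x) = (\pi(x), p(x))$. The counit axiom $(1_X \otimes \varepsilon) \circ \mu = \mathrm{id}_X$ (modulo the canonical isomorphism $X \otimes 1 \cong X$) read at the level of objects forces $\pi = \mathrm{id}_{X_0}$, so $\mu_0$ is completely determined by a map $p : X_0 \to C_0$.

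Next I would read off the hom-object data. For any $x, y \in X_0$, the fact that $\mu$ is a $\bm{V}$-functor provides, by the definition of the monoidal structure on $\categories{\bm{V}}$ (Lemma \ref{V-categories_is_monoidal}), a morphism
\[
\mu_{x,y} : X(x,y) \longrightarrow (X \otimes C)\bigl((x,p(x)),(y,p(y))\bigr) = X(x,y) \otimes C(p(x),p(y)).
\]
The two axioms for a $\bm{V}$-functor in Definition \ref{V-functor} translate directly: compatibility of $\mu$ with the compositions of $X$ and of $X \otimes C$ gives condition (1), where the map $T$ is precisely the associator/symmetry rearrangement built into the composition of $X \otimes C$; compatibility with identities gives condition (2).

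The remaining conditions come from the two coaction axioms. The counit axiom, evaluated on hom-objects, yields the triangle in condition (3), since $(1_X \otimes \varepsilon)_{(x,p(x)),(y,p(y))}$ acts on hom-objects as $1_{X(x,y)} \otimes \varepsilon_{p(x),p(y)}$. The coassociativity axiom $(1_X \otimes \Delta) \circ \mu = (\mu \otimes 1_C) \circ \mu$, evaluated on hom-objects and combined with the description of $\Delta$ on a coalgebra $\bm{V}$-category, produces the square in condition (4). Conversely, given the data $(p, \{\mu_{x,y}\})$ satisfying (1)--(4), one defines $\mu$ on objects by $x \mapsto (x, p(x))$ and on hom-objects by $\mu_{x,y}$; conditions (1) and (2) ensure that this is a $\bm{V}$-functor, and (3), (4) ensure the coaction axioms.

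The only real obstacle is bookkeeping with the associators and symmetry operators: verifying that the canonical composition $T$ appearing in the definition of $\circ$ in $X \otimes C$ matches the $T$ in condition (1), and that the coassociativity of $\mu$ as a $\bm{V}$-functor morphism correctly encodes condition (4) when one rewrites $(\mu \otimes 1_C) \circ \mu$ on hom-objects. Since $\bm{V}$ is symmetric monoidal and all these coherences are instances of Mac Lane's coherence theorem, these verifications are routine.
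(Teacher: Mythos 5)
Your proposal is correct and follows essentially the same route as the paper's proof: identify the object map via the counit axiom (forcing $\mu_0(x)=(x,p(x))$), read off $\mu_{x,y}$ from the description of $X\otimes C$, and match the $\bm{V}$-functor axioms to conditions (1)--(2) and the coaction axioms to conditions (3)--(4). You simply spell out the routine verifications that the paper leaves as ``easy to verify.''
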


\begin{proof}
 A functor $\mu : X \to X\otimes C $ defines a map
 \[
  \mu : X_0 \longrightarrow X_0\times C_0.
 \]
 Define $p=\pr_2\circ\mu : X_0\to C_0$. Then the counit condition
 implies 
 \[
  \mu(x) = (x,p(x))
 \]
 for $x \in C_0$ and for each pair of objects $x,y\in X_0$ we obtain
 \[
  \mu_{x,y} : X(x,y) \longrightarrow X(x,y)\otimes C(p(x),p(y)).
 \]
 It is easy to verify that the conditions on $\mu$ for comodule
 structure corresponds to conditions in this Lemma.
\end{proof}

\begin{example}
 \label{comodule_from_comma_category}
 Suppose $\bm{V}$ is of product type. By Example \ref{coalgebra_in_T}, any
 $\bm{V}$-category has a canonical coalgebra structure and any
 $\bm{V}$-functor 
 \[
  p : X \longrightarrow C
 \]
 is compatible with coproducts and counits. The composition
 \[
  X(x,y) \rarrow{\Delta} X(x,y)\otimes X(x,y) \rarrow{1\times p}
 X(x,y)\otimes  C(p(x),p(y))
 \]
 makes $X$ into a comodule over $C$.
\end{example}

Note that right comodules over a comonoid object in a
symmetric monoidal category, in general, form a category in an obvious
way (Definition \ref{morphism_of_comodules}). Namely by requiring a
morphism to make the diagram commutative 
\[
 \begin{diagram}
  \node{M} \arrow{e} \arrow{s} \node{N} \arrow{s} \\
  \node{M\otimes C} \arrow{e} \node{N\otimes C.}
 \end{diagram}
\]
In the case of comodule categories, we should relax the commutativity of
this diagram in the following way.

\begin{definition}
 \label{2-category_of_comodules}
 Let $C$ be a coalgebra $\bm{V}$-category and $(X,\mu)$ and $(X',\mu')$
 be right comodule categories over $C$. A left morphism of right comodule
 categories from $(X,\mu)$ to $(X',\mu')$ is a pair $(F,\varphi)$ of a
 $\bm{V}$-functor 
 \[
  F : X \longrightarrow X'
 \]
 and a $\bm{V}$-natural transformation
 \[
 \xymatrix{
 X \ar[d]_{\mu} \ar[r]^{F} & X' \ar[d]^{\mu'} \ar@{=>}[ld]_{\varphi} \\
 X\otimes C \ar[r]_{F\otimes 1} & X'\otimes C.
 }
 \]
 The composition of left morphisms are defined by the following diagram
 \[
 \xymatrix{%
 X \ar[d]^{\mu} \ar[r]^{F} & X' \ar[d]_{\mu'} \ar@{=>}[ld] \ar[r]^{F'}
 & X'' \ar[d]^{\mu''} \ar@{=>}[ld] \\
 X\otimes C \ar[r]_{F\otimes 1}  & X'\otimes C
 \ar[r]_{F'\otimes 1}  & X''\otimes C. 
 }
 \]
 
 For left morphisms
 \[
  (F,\varphi), (G,\psi) : (X,\mu) \longrightarrow (X',\mu'),
 \]
 a $2$-morphism from $(F,\varphi)$ to $(G,\psi)$ is a $\bm{V}$-natural
 transformation 
 \[
  \xi : F \Longrightarrow G
 \]
 making the following diagram commutative
 \[
  \xymatrix{%
  \mu'\circ F \ar@{=>}^{\varphi}[r] \ar@{=>}_{\mu'\circ \xi}[d] &
 (F\otimes 1)\circ \mu \ar@{=>}^{(\xi\otimes 1)\circ \mu}[d] \\
 \mu'\circ G \ar@{=>}^{\psi}[r] & (G\otimes 1)\circ \mu
 }
 \]
 The composition of $2$-morphisms are given by the composition of
 $\bm{V}$-natural transformations.

 Right morphisms for right comodules are defined analogously by
 reversing the direction of $\Rightarrow$.
 For left comodules, we also define left and right morphisms
 analogously. 

 The $2$-categories of right comodule categories over $C$ and left
 morphisms, of right comodules categories and right morphisms, of left
 comodule categories and left morphisms, and of left comodule
 categories and right morphisms are denoted by $\rlComod{C}$,
 $\rrComod{C}$, $\llComod{C}$, and $\lrComod{C}$,
 respectively. $2$-categories of bicomodule categories $\bilComod{C}{C}$
 and $\birComod{C}{C}$ are defined similarly.
\end{definition}

\begin{remark}
 The above definition can be generalized to comodules over a comonoid
 object in a symmetric monoidal $2$-category.
\end{remark}

\begin{example}
 \label{comma_category_as_comodule}

 Suppose $\bm{V}$ is of product type. Recall from Example
 \ref{comodule_from_comma_category} that any $\bm{V}$-category $C$ can
 be regarded as a coalgebra category and any functor
 \[
  p : X \longrightarrow C
 \]
 defines a right comodule structure
 \[
  X \rarrow{\Delta} X\otimes X \rarrow{1\otimes p}
 X \otimes C 
 \]
 on $X$. It is easy to see that this
 correspondence defines a $2$-functor
 \[
  \lcats{\bm{V}}\downarrow C \longrightarrow \rlComod{C},
 \]
 where the structure of $2$-category on $\lcats{\bm{V}}\downarrow C$ is
 defined in Example \ref{2-category_of_comma_categories}. Since
 $\otimes$ is the direct product, we can recover $p$ from
 $(1\otimes p)\circ \Delta$. However, there are differences in
 $1$-morphisms. For example, a morphism
 \[
  (F,\varphi) : (X,\mu) \longrightarrow (X',\mu')
 \]
 of right $C$-comodule categories is given by a $\bm{V}$-functor
 \[
  F : X \longrightarrow X'
 \]
 and a family of morphisms
 \[
  \varphi(x) : (F(x), p(F(x))) \longrightarrow (F(x), p(x))
 \]
 in $X'\otimes C$. Since $\otimes$ is the product in $\bm{V}$,
 $\varphi(x)$ is of the form
 \[
  \varphi(x) = \varphi_1(x)\otimes \varphi_2(x) \in
 (X\otimes C)((F(x),p(F(x))),(F(x),p(x))) = X(F(x),F(x))\otimes
 C(p(F(x)),p(x)). 
 \]
 The $1$-morphisms coming from $\lcats{\bm{V}}\downarrow C$ are
 exactly those morphisms having natural transformations whose first
 component is the identity.
 
 Thus we can regard $\lcats{\bm{V}}\downarrow C$ as a
 $2$-subcategory of $\rlComod{C}$.
\end{example}

When $\bm{V}$ is Abelian, we cannot expect such a simple
characterization of comodule categories as above.
We will see in \S\ref{graded_category} that, in this case,
the notion of comodules over a free $\bm{V}$-category $I\otimes 1$
generated by a small category $I$ corresponds to the notion of graded
categories.

\section{The Grothendieck Construction}
\label{definitions}

In this section, we define the Grothendieck construction as a
$2$-functor from the category of oplax functors from $I$ to the category
of comodules over $I\otimes 1$. The construction works for lax functors
by reversing arrows appropriately.

\subsection{The Grothendieck Construction for Oplax and Lax Functors}
\label{definition_for_oplax_functor}

The following is our definition of the Grothendieck construction. Recall
that $\bm{V}$ is a symmetric monoidal category satisfying Assumption
\ref{condition_on_V}. In particular it is closed under arbitrary
coproducts. 

\begin{definition}
 Let $I$ be a small category. For an oplax functor
 \[
  X : I \longrightarrow \categories{\bm{V}},
 \]
 define a $\bm{V}$-category $\Gr(X)$ as follows: Objects are
 given by
 \[
  \Gr(X)_0 = \coprod_{i\in I_0} X(i)_0\times\{i\}.
 \]
 For $(x,i),(y,j) \in \Gr(X)_0$, define
 \[
  \Gr(X)((x,i),(y,j)) = \bigoplus_{u:i\to j} X(j)(X(u)(x),y).
 \]
 The composition
 \[
 \circ : \Gr(X)((y,j),(z,k))\otimes\Gr(X)((x,i),(y,j)) \longrightarrow
 \Gr(X)((x,i),(z,k)) 
 \]
 is given, on each component, by
 \begin{eqnarray*}
  X(k)(X(v)(y),z)\otimes X(j)(X(u)(x),y) & \rarrow{1\otimes X(v)} &
   X(k)(X(v)(y),z)\otimes X(k)(X(v)(X(u)(x)),X(v)(y)) \\
  & \rarrow{\theta_{v,u}^*} & X(k)(X(v)(y),z)\otimes X(k)(X(v\circ
   u)(x)),X(v)(y)) \\ 
 & \rarrow{\circ} & X(k)(X(v\circ u)(x),z).
 \end{eqnarray*}

\end{definition}

One of the simplest examples is the semidirect product construction for
groups. 

\begin{example}
 \label{translation_groupoid}
 Suppose a group $G$ acts on another group $H$.
 We regard $G$ as a category with a
 single object $\ast$. Then the action defines a functor
 \[
  H : G \longrightarrow \category{Groups} \subset \Categories
 \]
 by
 \[
  H(\ast) = H
 \]
 and
 \[
  H(g) = g\cdot : H \longrightarrow H.
 \]
 The Grothendieck construction of this functor is nothing but the
 semidirect product.
\end{example}

This example can be extended as follows.

\begin{example}
 Let $A$ be a $\bm{V}$-category. Suppose a group $G$ acts on $A$ from
 the left via $\bm{V}$-functors. We regard it as a functor
 \[
  A : G \longrightarrow \bm{V}\category{-Categories}.
 \]

 The Grothendieck construction of $A$ is called the orbit
 category by Cibils and Marcos in \cite{math/0312214} when $\bm{V}$ is
 the category of $k$-modules.

 By definition, objects of $\Gr(A)$ can be identified with
 objects in $A$
 \[
  \Gr(A)_0 = A_0\times \{\ast\} \cong A_0.
 \]
 For $x,y\in A_0$, morphisms are given by
 \[
  \Gr(A)(x,y) = \bigoplus_{g\in G} {A}(gx,y)
 \]
 and compositions are given, on each component, by
 \[
  A(gy,z)\otimes A(hx,y) \rarrow{1\otimes A(g)} A(gy,z)\otimes A(ghx,gy)
 \rarrow{\circ} A(ghx,z).
 \]
\end{example}

It is not difficult to define $\Gr$ as a $2$-functor
\[
 \Gr : \lOplax(I,\categories{\bm{V}}) \longrightarrow
 \categories{\bm{V}}. 
\]

\begin{definition}
 \label{Gr(F)_definition}
 For a left transformation of oplax functors
 \[
  (F,\varphi) : X \longrightarrow Y,
 \]
 define
 \[
  \Gr(F,\varphi) : \Gr(X) \longrightarrow \Gr(Y)
 \]
 by
 \[
  \Gr(F,\varphi)(x,i) = (F(i)(x),i)
 \]
 for objects and 
 \[
 \Gr(F,\varphi) : \Gr(X)((x,i),(y,j)) \longrightarrow
 \Gr(Y)((F(i)(x),i), (F(j)(y),j))
 \]
 by the composition
 \[
  X(j)(X(u)(x),y) \rarrow{F(j)} Y(j)(F(j)(X(u)(x),F(j)(y)))
 \rarrow{\varphi(u)^*} Y(j)(Y(u)(F(i)(x)),F(j)(y))
 \]
 on each component.
\end{definition}

\begin{definition}
 \label{Gr(theta)}
 For a $2$-morphism
 \[
  \theta : (F,\varphi) \Longrightarrow (G,\psi)
 \]
 in $\lOplax(I,\categories{\bm{V}})$, define a $2$-morphism
\[
 \Gr(\theta) : \Gr(F,\varphi) \Longrightarrow
 \Gr(G,\psi)
\]
in $\categories{\bm{V}}$ by 
 \[
  \Gr(\theta)(x,i) = (\theta(i)(x), 1_i) : \Gr(F,\varphi)(x,i) =
 (F(i)(x),i) \longrightarrow (G(i)(x),i) = \Gr(G,\psi)(x,i).
 \]
\end{definition}

\begin{proposition}
 The above constructions define a $2$-functor
 \[
  \Gr : \lOplax(I,\categories{\bm{V}}) \longrightarrow
 \categories{\bm{V}}. 
 \]
\end{proposition}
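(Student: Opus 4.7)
The plan is to verify, in order, four items: (a) $\Gr(X)$ is a well-defined $\bm{V}$-category; (b) $\Gr(F,\varphi)$ is a $\bm{V}$-functor; (c) $\Gr(\theta)$ is a $\bm{V}$-natural transformation; and (d) $\Gr$ respects identities together with the vertical and horizontal compositions of the $2$-category $\lOplax(I,\categories{\bm{V}})$. Throughout, since the hom-objects in $\Gr(X)$ are coproducts indexed by hom-sets of $I$ and $\otimes$ in $\bm{V}$ distributes over coproducts, it suffices to check every identity on individual summands, reducing each statement to a diagram chase inside a single $X(k)$ or $Y(k)$.

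For (a), associativity of composition on $\Gr(X)$, restricted to the summand indexed by a composable triple $i \rarrow{u} j \rarrow{v} k \rarrow{w} \ell$, unwinds to inserting $\theta^X_{v,u}$ and $\theta^X_{w,v}$ into the triple composition in $X(\ell)$ along two routes; the equality of those routes is precisely the coherence axiom of Definition \ref{oplax_functor_definition}, and unitality reduces to the unit axiom for $\eta_i$. For (b), the required compatibility of $\Gr(F,\varphi)$ with composition on the summand indexed by a pair $(v,u)$ amounts to the commutativity of
\[
\begin{diagram}
\node{X(k)(X(v)(y),z)\otimes X(j)(X(u)(x),y)} \arrow{e,t}{\circ} \arrow{s}
\node{X(k)(X(v\circ u)(x),z)} \arrow{s} \\
\node{Y(k)(Y(v)(F(j)(y)),F(k)(z))\otimes Y(j)(Y(u)(F(i)(x)),F(j)(y))} \arrow{e,t}{\circ}
\node{Y(k)(Y(v\circ u)(F(i)(x)),F(k)(z))}
\end{diagram}
\]
with the vertical maps built from $F$ and pullbacks by $\varphi$. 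Chasing both routes, the comparison reduces to the hexagonal coherence axiom relating $\varphi(v\circ u)$ with $\theta^X_{v,u}$, $\theta^Y_{v,u}$, $\varphi(u)$, and $\varphi(v)$ from the definition of left transformation of oplax functors; unitality reduces correspondingly to the unit compatibility.

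For (c), each component $\theta(i)(x) : 1 \to Y(i)(F(i)(x),G(i)(x))$ lies naturally inside the summand of $\Gr(Y)((F(i)(x),i),(G(i)(x),i))$ indexed by $1_i$, and $\bm{V}$-naturality on an arbitrary summand indexed by $u : i \to j$ is exactly the commutativity condition defining $\theta$ as a morphism of left transformations. Finally (d) is a direct unwinding: the formulas for $\Gr(F,\varphi)$ and $\Gr(\theta)$ are summand-by-summand from data that is itself composition-compatible, so preservation of identities, of vertical composition of $2$-morphisms, and of horizontal composition of $1$-morphisms with $2$-morphisms follow by routine inspection. The substantive obstacle is step (b): one must keep track of three ambient categories $X(j)$, $X(k)$, $Y(k)$ and align everything inside $Y(k)$ through the correct successive insertions of $\theta$'s and $\varphi$'s before the hexagon axiom of the left transformation can be invoked.
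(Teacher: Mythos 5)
Your outline is correct and follows the only reasonable route---direct verification---which is also what the paper intends: the proposition is stated without a written proof, and the corresponding checks carried out later in \S\ref{Gr_as_2-functor} are dismissed as obvious or straightforward. In particular you correctly isolate the two substantive points, namely that associativity of composition in $\Gr(X)$ rests on the coherence axiom for $\theta_{u',u}$ in Definition \ref{oplax_functor_definition} (with unitality coming from the axiom for $\eta_i$), and that compatibility of $\Gr(F,\varphi)$ with composition rests on the hexagon axiom in the definition of a left transformation.
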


We may dualize the above construction to obtain 
\[
 \Gr : \rLax(I^{\op},\categories{\bm{V}}) \longrightarrow
 \categories{\bm{V}}. 
\]
We briefly describe the construction.

\begin{definition}
 For a lax functor
 \[
  X : I^{\op} \longrightarrow \categories{\bm{V}},
 \]
 define a $\bm{V}$-category $\Gr(X)$ by
 \[
  \Gr(X)_0 = \coprod_{i\in I_0}\{i\}\times X(i)_0
 \]
 and 
 \[
  \Gr(X)((i,x),(j,y)) = \bigoplus_{u : i\to j} X(i)(x,X(u)(y)).
 \]
 The composition is given on each component by 
 \begin{eqnarray*}
  X(j)(y,X(v)(z))\otimes X(i)(x,X(u)(y)) & \rarrow{X(u)\otimes 1} &
   X(i)(X(u)(y),X(u)\circ X(v)(z))\otimes X(i)(x,X(u)(y)) \\
  & \rarrow{\theta_*} & X(i)(X(u)(y),X(v\circ u)(z))\otimes
   X(i)(x,X(u)(y)) \\ 
  & \rarrow{\circ} & X(i)(x,X(v\circ u)(z)).
 \end{eqnarray*}

 For a right transformation $(F,\varphi)$ of lax functors
 \[
  \xymatrix{
  X(j) \ar[r]^{F(j)} \ar[d]_{X(u)} & Y(j) \ar[d]^{Y(u)}
 \\ 
 X(i) \ar@{=>}[ur]^{\varphi(u)} \ar[r]_{F(i)} & Y(i),
 }
 \]
 define
 \[
  \Gr(F,\varphi) : \Gr(X) \longrightarrow \Gr(Y)
 \]
 by
 \[
  \Gr(F,\varphi)(i,x) = (i,F(i)(x))
 \]
 for objects and 
 \[
  \Gr(F,\varphi) : \Gr(X)((i,x),(j,y)) \longrightarrow
 \Gr(Y)((i,F(i)(x)),(j,F(j)(y))) 
 \]
 by the composition
 \[
  X(i)(x,X(u)(y)) \rarrow{F(i)} Y(i)(F(i)(x),F(i)(X(u)(y)))
 \rarrow{\varphi(u)_*} Y(i)(F(i)(x),Y(u)(F(j)(y))).
 \]
\end{definition}

\begin{proposition}
 The above constructions define a $2$-functor
 \[
 \Gr : \rLax(I^{\op},\categories{\bm{V}}) \longrightarrow
 \categories{\bm{V}}. 
 \]
\end{proposition}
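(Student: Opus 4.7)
The plan is to verify by direct computation that the assignments $X \mapsto \Gr(X)$, $(F,\varphi) \mapsto \Gr(F,\varphi)$ on $1$-morphisms, and the obvious componentwise definition on $2$-morphisms (mirroring Definition \ref{Gr(theta)}) satisfy the three requirements of a $2$-functor: (i) $\Gr(X)$ is a $\bm{V}$-category, (ii) $\Gr(F,\varphi)$ is a $\bm{V}$-functor and $\Gr$ preserves identities and compositions of $1$-morphisms, and (iii) the induced map on hom-categories is functorial, i.e.\ respects identities and vertical composition of $2$-morphisms. The arguments run exactly parallel to the oplax case already established; one can also view the verification as a formal consequence of it by reversing the direction of the coherence $2$-cells everywhere and reindexing morphisms of $I^{\op}$ as morphisms of $I$, but carrying out the direct check is no harder.

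First I would verify (i). The composition in $\Gr(X)$ is defined componentwise over pairs $(u,v)$ of composable morphisms in $I$. Associativity, on the summand indexed by a triple $i \rarrow{u} j \rarrow{v} k \rarrow{w} \ell$, reduces to the composability-coherence diagram for the lax structure maps $\theta_{w,v\circ u}$ and $\theta_{v,u}$ in Definition \ref{oplax_functor_definition} (dualized), together with associativity of composition inside $X(i)$. The unit laws follow from the unit coherence involving $\eta_i : F(1_i) \Rightarrow 1_{F(i)}$ together with the unit law for $X(i)$.

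Next, for (ii), preservation of identities by $\Gr(F,\varphi)$ is immediate from the identity-coherence axiom for a right transformation. Preservation of composition is a two-step unfolding: the relevant square factors through an intermediate diagram whose commutativity uses naturality of $\varphi(u)$ in the $\bm{V}$-functor $X(u)$, followed by the coherence of $\varphi$ over a composable pair $u,v$ in $I$ (i.e.\ the analogue of the second axiom for a right transformation). Preservation of composition of right transformations $(G,\psi) \circ (F,\varphi)$ is then a routine diagram chase on each component, combining the analogous identities for $\psi$ with those for $\varphi$.

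For (iii), one checks that $\Gr(\theta)$, defined with value $\theta(i)(x)$ concentrated on the summand indexed by $1_i$, is a $\bm{V}$-natural transformation. Its naturality square, evaluated on the summand $X(i)(x, X(u)(y))$, unpacks into the naturality square for $\theta(j)$ against morphisms in $X(i)$ combined with the compatibility of $\theta$ with the two right transformations it connects, which is precisely the defining axiom for a modification of right transformations. Preservation of vertical composition is then formal from the componentwise definition. The main obstacle is pure bookkeeping: because hom-objects in $\Gr(X)$ are coproducts indexed by morphisms of $I$, every diagram must be checked summand by summand, and in each summand one must carefully track the interaction of $\theta$, $\varphi$, the associator, and the symmetry of $\bm{V}$. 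However, each such componentwise verification reduces to a coherence axiom already imposed in \S\ref{2-category} or to the associativity/symmetry axioms for $\bm{V}$, so no new ingredient is needed.
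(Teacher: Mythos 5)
Your proposal is correct and follows essentially the same route as the paper, which states this proposition without proof as the formal dual of the oplax case: a summand-by-summand verification in which associativity and unitality of $\Gr(X)$ reduce to the lax coherence of $\theta_{v,u}$ and $\eta_i$ together with the axioms of each $X(i)$, functoriality of $\Gr(F,\varphi)$ reduces to naturality and the two right-transformation axioms, and well-definedness of $\Gr(\theta)$ reduces to the modification axiom. The only slip is notational: for a lax functor the unit $2$-cell runs $\eta_i : 1_{X(i)} \Rightarrow X(1_i)$, the reverse of the direction you wrote, and it is this direction that supplies the identity morphism of $(i,x)$ in the summand indexed by $1_i$.
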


The above propositions say the Grothendieck construction is a process to glue
$\bm{V}$-categories in a given diagram of $\bm{V}$-categories together
to form a single $\bm{V}$-category. We show in
\S\ref{left_adjoint_to_diagonal} that this $\Gr$ is left adjoint to the
diagonal functor and $\Gr(X)$ can be regarded as a $2$-colimit of
$X$. If we regard the Grothendieck 
construction in this way, there is no hope to recover the original
diagram. Note that the Grothendieck construction is defined by
coproducts and $\Gr(X)$ is a $\bm{V}$-category whose morphism objects
have coproduct decompositions. It turns out that these coproduct
decompositions allow us to construct a diagram of $\bm{V}$-categories
which is very closely related to the original diagram $X$.

We use the notion of graded category, which is the subject of the next
section \S\ref{graded_category}, in order to make the above
statement precise.
\subsection{Graded Categories}
\label{graded_category}

Given a group $G$, the notion of $G$-graded category was introduced and
has been studied for $k$-linear categories by extending the notion of
$G$-graded algebras. More generally, $k$-linear categories graded by a
small category have been defined and used in the deformation theory of
$k$-linear prestacks \cite{Lowen08,0905.2354}. In these approaches
graded categories are defined by using coproducts.

We have seen in \S\ref{comodule} that
coproduct decompositions are intimately related to comodule structures
over a ``free coalgebra''. This observation allows us to
define graded categories without coproduct decompositions. This
viewpoint also suggests us to extend the notion of morphisms by
incorporating appropriate natural transformations. Our
definition is also suggested by the definition of degree preserving
functor by Asashiba \cite{0905.3884}. Note that morphisms between graded
categories defined in \cite{Lowen08} correspond to strictly degree
preserving functors in \cite{0905.3884} and our category of $I$-graded
$\bm{V}$-categories is larger than Lowen's.

The following is our definition of $\bm{V}$-categories graded by a small
category $I$.

\begin{definition}
 \label{graded_category_definition}
 Let $I$ be a small category and $X$ be a $\bm{V}$-category. An
 $I$-grading on $X$ is a right comodule structure
 \[
  \mu : X \longrightarrow X\otimes (I\otimes 1)
 \]
 on $X$ over the coalgebra category $I\otimes 1$.
\end{definition}

\begin{remark}
 We may also use left comodules. Since $I\otimes 1$ is a cocommutative
 coalgebra category, it does not make any essential difference.
\end{remark}

A more concrete description of $I$-grading can be obtained in many cases.

\begin{lemma}
 Suppose $\bm{V}$ is the category $\lMod{k}$ for a commutative ring
 $k$. Then an $I$-grading on a 
 $\bm{V}$-category $X$ consists of a map 
 \[
  p : X_0 \longrightarrow I_0
 \]
 and a family of coproduct decompositions
 \[
 X(x,y) \rarrow{\cong} \bigoplus_{u:p(x)\to p(y) \text{ in }
 I} X^u(x,y) 
 \]
 indexed by pairs of objects in $X$ satisfying the following conditions:
 \begin{enumerate}
  \item For any object $x\in X_0$, we have the following diagram
	\[
	\begin{diagram}
	 \node{1} \arrow{e,t}{1_x} \arrow{se,..} \node{X(x,x)} \\
	\node{} \node{X^{1_{p(x)}}(x,x)} \arrow{n,J}
	\end{diagram}
	\]
  \item For objects $x,y,z\in X_0$, the following diagram is commutative
	\[
	 \begin{diagram}
	  \node{X(y,z)\otimes X(x,y)} \arrow{e,t}{\circ} \node{X(x,z)} \\
	  \node{X^u(y,z)\otimes X^v(x,y)} \arrow{n,J} \arrow{e,..}
	  \node{X^{uv}(x,z).} \arrow{n,J}
	 \end{diagram}
	\]
 \end{enumerate}
\end{lemma}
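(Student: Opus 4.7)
The plan is to combine the concrete unpacking of right comodule categories from Lemma \ref{concrete_comodule_category} with the dictionary between free-coalgebra comodule structures and coproduct decompositions from Lemma \ref{decomposition_and_comodule}, applied hom-module by hom-module. By Definition \ref{graded_category_definition}, an $I$-grading on $X$ is a right coaction of $I \otimes 1$ on $X$, so Lemma \ref{concrete_comodule_category} supplies (and is equivalent to) the data of a map $p : X_0 \to (I \otimes 1)_0 = I_0$ together with coactions
\[
\mu_{x,y} : X(x,y) \longrightarrow X(x,y) \otimes (I \otimes 1)(p(x),p(y))
\]
satisfying the counit, coassociativity, identity, and composition axioms of that lemma. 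The coalgebra in $\bm{V} = \lMod{k}$ sitting on the right is, by definition of $I \otimes 1$ and Example \ref{coproduct_on_free_object}, the free $k$-module on $S_{x,y} = \mathrm{Mor}_I(p(x),p(y))$ with its cocommutative diagonal comultiplication, so Lemma \ref{decomposition_and_comodule} gives a bijection between such coactions $\mu_{x,y}$ and coproduct decompositions
\[
X(x,y) \cong \bigoplus_{u : p(x) \to p(y)} X^u(x,y),
\]
under which $\mu_{x,y}$ sends $f \in X^u(x,y)$ to $f \otimes u$.

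Next I would verify that the two purely $(I \otimes 1)(p(x),p(y))$-internal axioms (counit and coassociativity) become automatic once the decomposition has been chosen: on a homogeneous element $f \in X^u(x,y)$, coassociativity reduces to the identity $f \otimes u \otimes u = f \otimes \Delta(u)$, which holds because $\Delta(u) = u \otimes u$ on basis elements by Example \ref{coproduct_on_free_object}, and the counit axiom collapses similarly. The remaining work is to translate the two axioms of Lemma \ref{concrete_comodule_category} that couple different hom-modules. The identity axiom $\mu_{x,x}(1_x) = 1_x \otimes 1_{p(x)}$ is, under the bijection, precisely the assertion that $1_x$ lies in the summand $X^{1_{p(x)}}(x,x)$, which is condition (1) of the statement.

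For the composition axiom, I would evaluate the two paths of the relevant square from Lemma \ref{concrete_comodule_category} on homogeneous elements $g \in X^u(y,z)$ and $h \in X^v(x,y)$. The clockwise path sends $g \otimes h$ to $\mu_{x,z}(g \circ h)$, while the counter-clockwise path produces $(g \otimes u) \otimes (h \otimes v)$, rearranges by the symmetry $T$ to $(g \otimes h) \otimes (u \otimes v)$, and then applies composition in both tensor factors to land at $(g \circ h) \otimes (u \circ v)$ after using that composition in $I \otimes 1$ sends $u \otimes v$ to the basis element $u \circ v$. Equality for all such $g, h$ is exactly the statement that composition maps $X^u(y,z) \otimes X^v(x,y)$ into $X^{u \circ v}(x,z)$, i.e.\ condition (2). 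Conversely, if one starts with data $(p, \{X^u(x,y)\})$ satisfying (1) and (2), defining $\mu_{x,y}$ on each summand $X^u(x,y)$ by $f \mapsto f \otimes u$ and extending by linearity reverses the translation and gives an $I$-grading. The only mildly delicate point, and the main bookkeeping hurdle, is tracking the symmetry $T$ and the identification of the multiplication $(I \otimes 1)(p(y),p(z)) \otimes (I \otimes 1)(p(x),p(y)) \to (I \otimes 1)(p(x),p(z))$ with composition in $I$ on basis elements; once this is in place, the equivalence of the two descriptions is a direct unpacking.
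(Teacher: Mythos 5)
Your proposal is correct and follows exactly the route the paper takes: the paper's own proof is the one-line remark that the lemma is ``a direct translation of Lemma \ref{decomposition_and_comodule} and Lemma \ref{concrete_comodule_category},'' and your argument is precisely that translation carried out in detail, applying the comodule/decomposition dictionary hom-module by hom-module and checking that the identity and composition axioms become conditions (1) and (2).
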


\begin{proof}
 This is a direct translation of Lemma \ref{decomposition_and_comodule}
 and Lemma \ref{concrete_comodule_category}.
\end{proof}

\begin{example}
 \label{G-graded_category}
 Let $G$ be a group and consider the case $\bm{V} = \lMod{k}$. The
 unit object of $\lMod{k}$ is the ground ring $k$ and the
 $k$-linear category $G\otimes k$ is nothing but the group algebra
 $k[G]$ over $k$ regarded as a $k$-linear category with a single
 object. 

 Since $G\otimes 1=k[G]$ has the only object $\ast$, a $G$-grading on a
 $k$-linear category $A$ is given by 
 a coproduct decomposition
 \[
  {A}(x,y) = \bigoplus_{g\in G} A^g(x,y)
 \]
 satisfying the condition that the functor $p$ induces 
 \[
  \circ : A^g(y,z)\otimes A^h(x,y) \longrightarrow A^{gh}(x,z).
 \]
 and
 \[
  \begin{diagram}
   \node{1} \arrow{e,t}{1_x} \arrow{se,..} \node{A(x,x)} \\
   \node{} \node{A^1(x,x).} \arrow{n}
  \end{diagram}
 \]
 This is the definition of a $G$-graded
 $k$-linear category in \cite{math/0312214,0807.4706,0905.3884}. 
\end{example}

Even if $\bm{V}$ is not Abelian, we often obtain an analogous coproduct
decomposition.

\begin{example}
 \label{nonenriched_graded_category}
 Let $I$ be a small category. Consider the case that $\bm{V}$ is the
 category of sets. Suppose a small category $X$ has an $I$-grading. As
 we have seen in Example \ref{comma_category_as_comodule}, it is
 determined by a functor
 \[
  p : X \longrightarrow I.
 \]
  We have a map of morphism sets
 \[
  p_{x,y} : \Mor_X(x,y) \longrightarrow \Mor_I(p(x),p(y))
 \]
 for each pair of objects $x,y \in X_0$. Denote
 \[
  \Mor^u_X(x,y) = p_{x,y}^{-1}\{u\}
 \]
 and we have a coproduct decomposition
 \[
  \Mor_X(x,y) = \coprod_{u:p(x)\to p(y)} \Mor^u_X(x,y).
 \]
 The composition of morphisms in $X$ induces a map
 \[
  \Mor^u_X(y,z)\times \Mor^v_X(x,y) \longrightarrow \Mor^{uv}(x,z).
 \]

 Conversely, we can recover $p$ from the above coproduct decomposition
 by using the following composition
 \[
  \Mor_X(x,y)= X(x,y) \cong \coprod_{u:p(x)\to p(y)} X^u(x,y)
 \rarrow{} 
 \coprod_{u:p(x) \to p(y)} \{u\} = \Mor_I(p(x),p(y)).
 \]

 Note that this argument works without a change when $\bm{V}$ is the
 category of topological spaces, simplicial sets, and small categories.
\end{example}

\begin{example}
 \label{graded_spectral_category}
 Suppose $\bm{V}$ is a stable model category and $X$ is an $I$-graded
 $\bm{V}$-category. By Remark 
 \ref{decompostion_for_stable_model_category}, we have a coproduct
 decomposition of each $X(x,y)$ up to weak equivalences.
 In particular, we have
 \[
  X(x,y) \simeq \bigvee_{u: p(x)\top(y)} X^u(x,y)
 \]
 when $X$ is an $I$-graded spectral category in our sense. See
 \cite{0801.4524,0802.3938} for spectral categories. 
\end{example}

We have also seen in Example \ref{comma_category_as_comodule} that, when
$\bm{V}$ is of product type, the comma $2$-category
$\lcats{\bm{V}}\downarrow (I\otimes 1)$ can be regarded as a 
$2$-subcategory of the $2$-category $\rlComod{(I\otimes 1)}$ of
comodules over $I\otimes 1$. 
Thus we would like to define a $2$-category of $I$-graded
categories which generalizes the $2$-category of the comma category
$\lcats{\bm{V}}\downarrow (I\otimes 1)$ when $\bm{V}$ is of product
type and the $2$-category of $G$-graded categories defined by Asashiba
\cite{0905.3884}. Note that, although we have defined a $2$-category of
comodule categories in Definition \ref{2-category_of_comodules},
$1$-morphisms in this $2$-category is less restrictive than $1$-morphisms
of $G$-graded categories.

\begin{example}
 Let $G$ be a group and 
 \begin{eqnarray*}
  \mu & : & A \longrightarrow A\otimes k[G] \\
  \mu' & : & B \longrightarrow B\otimes k[G]
 \end{eqnarray*}
 be $G$-graded $k$-linear categories regarded as comodules over
 $k[G]$. Let 
 \[
  (F,\varphi) : (A,\mu) \longrightarrow (B,\mu')
 \]
 be a left morphism of $k[G]$-comodules. $F$ is a $k$-linear
 functor
 \[
  F : A \longrightarrow B
 \]
 and $\varphi$ is a $k$-linear natural transformation
 \[
 \xymatrix{
 A \ar[r]^{F} \ar[d]_{\mu} & B \ar[d]^{\mu'} \ar@{=>}[dl]_{\varphi} \\
 A\otimes k[G] \ar[r]_{F\otimes 1} & B\otimes k[G].
 }
 \]
 Note that $\mu$ and $\mu'$ are identity on objects and, for each object
 $a\in A_0$, $\varphi(a)$ can be regarded as an element
 \[
  \varphi(a) \in (B\otimes k[G])(\mu'(F(a)),(1\otimes
 F)(\mu(a)))  = B(F(a),F(a))\otimes k[G].
 \]

 The condition that $\varphi$ is a $k$-linear natural
 transformation implies that the following diagram is commutative
 \begin{center}
  \scalebox{.8}{%
  $\begin{diagram}
   \node{} \node{A(a,a')} \arrow{se,t}{\ell^{-1}} 
   \arrow{sw,t}{r^{-1}} \\
   \node{A(a,a')\otimes 1} \arrow{s,l}{\mu\otimes 1}
   \node{} \node{1\otimes A(a,a')}
   \arrow{s,r}{1\otimes F} \\  
   \node{(A(a,a')\otimes k[G])\otimes 1} \arrow{s,l}{(F\otimes
   1)\otimes\varphi(a)} \node{} 
   \node{1\otimes B(F(a),F(a'))} \arrow{s,r}{\varphi(a')\otimes \mu'} \\
   \node{(B(F(a),F(a'))\otimes k[G])\otimes (B(F(a),F(a))\otimes k[G])}
   \arrow{se,b}{\circ} \node{}
   \node{(B(F(a'),F(a')\otimes k[G])\otimes (B(F(a),F(a'))\otimes k[G])}
   \arrow{sw,b}{\circ} \\
   \node{}\node{B(F(a),F(a'))\otimes k[G].} \node{}
  \end{diagram}$}
 \end{center}

 Suppose $\varphi(a)$ is of the form
 \[
  \varphi(a) = 1_{F(a)}\otimes\varphi_2(a)
 \]
 for some $\varphi_2(a)\in G$ and suppose $f\in A^{h}(a,a')$ and
 $F(f)\in B^{h'}(F(a),F(a'))$. Then we have 
 \begin{eqnarray*}
  \mu(f) & = & f\otimes h \\
  \mu'(F(f)) & = & F(f)\otimes h'
 \end{eqnarray*}
 and the commutativity of the diagram implies 
 \[
  F(f)\otimes (h\varphi_2(a)) = F(f)\otimes (\varphi_2(a')h'). 
 \]

In other words, $F$ restricts to 
 \[
  F : A^{\varphi_2(a')g}(a,a') \longrightarrow
 B^{g\varphi_2(a)}(F(a),F(a')). 
 \]
 This is the definition of degree preserving functor in Definition
 4.1(2) in \cite{0905.3884}.
\end{example}

\begin{definition}
 Let $I$ be a small category. A left morphism
 \[
  (F,\varphi) : (X,\mu) \longrightarrow (X',\mu')
 \]
 in $\rlComod{(I\otimes 1)}$ is called degree-preserving if there exists 
 a family of morphisms in $I$ 
 \[
  \varphi_2(x) \in \Mor_I(p'(F(x)),p(x)) \subset
 \Mor_{\underline{I\otimes 1}}(p'(F(x)), p(x)) =\Mor_{\bm{V}}(1,
 (I\otimes 1)(p'(F(x)),p(x))) 
 \]
 with
 \[
  \varphi(x) = 1\otimes\varphi_2(x).
 \]

 We define degree preserving right morphisms in the same way. 
\end{definition}

\begin{remark}
 We required that $\varphi_2(x)$ is homogeneous in the sense that it
 belongs to $I$ not in $I\otimes 1$.
\end{remark}

\begin{definition}
 \label{2-category_of_graded_category_definition}
 Let $I$ be a small category. The subcategories of the $2$-categories
 $\rlComod{(I\otimes 1)}$ and $\rrComod{(I\otimes 1)}$consisting of
 degree preserving (left or right) morphisms  
 are denoted by $\lcats{\bm{V}}_{I}$ and $\rcats{\bm{V}}_I$ and called
 the $2$-categories of left or right $I$-graded $\bm{V}$-categories,
 respectively. 
\end{definition}

\begin{remark}
 The definition of morphisms of graded categories in \cite{Lowen08}
 requires $\varphi$ to be the identity.
\end{remark}
\subsection{The Grothendieck Construction as a Graded Category}
\label{Gr_as_2-functor}

In this section, we extend the Grothendieck construction for oplax
functors to a $2$-functor
\[
 \Gr : \lOplax(I,\categories{\bm{V}}) \longrightarrow
 \lcats{\bm{V}}_I.
\]

Let us begin with objects.

\begin{definition}
 For an oplax functor
 \[
 X : I \longrightarrow \categories{\bm{V}},
 \]
 define a $\bm{V}$-functor
 \[
 \mu_X : \Gr(X) \longrightarrow \Gr(X)\otimes (I\otimes 1)
 \]
 as follows. On objects,
 \[
 \mu_X : \Gr(X)_0 = \coprod_{i\in I_0} X(i)_0\times\{i\} \longrightarrow
 \Gr(X)_0\otimes I_0
 \]
 is defined by
 \[
  \mu_X(x,i) = (x,i,i).
 \]
 Define, on each component of morphisms, by
 \[
  \begin{diagram}
   \node{\Gr(X)((x,i),(y,j))} \arrow[2]{e,t}{\mu_X} \node{}
   \node{(\Gr(X)\otimes (I\otimes 1))((x,i),(y,j))} \\
   \node{X(j)(X(u)(x),y)}  \arrow{n,J} \arrow{e,t}{\cong}
   \node{X(j)(X(u)(x),y)\otimes (1\otimes 1)} 
   \arrow{e,t}{1\otimes u} \node{X(j)(X(u)(x),y)\otimes (u\otimes 1).}
   \arrow{n,J}
  \end{diagram}
 \]
\end{definition}

Let
\[
 (F,\varphi) : X \longrightarrow X'
\]
be a left morphism of oplax functors. Recall that in Definition
\ref{Gr(F)_definition}, we defined a $\bm{V}$-functor 
\[
 \Gr(F,\varphi) : \Gr(X) \longrightarrow \Gr(X').
\]
In order to obtain a left morphism in $\rlComod{(I\otimes 1)}$, we need a 
$\bm{V}$-natural transformation
\[
 \Gr(\varphi) : \mu_{X'}\circ \Gr(F,\varphi) \Longrightarrow
 (\Gr(F,\varphi)\otimes 1)\circ \mu_{X}. 
\]

\begin{definition}
 For a left morphism of oplax functors
 \[
 (F,\varphi) : X \longrightarrow X',
 \]
 define a $\bm{V}$-natural transformation
 \[
 \Gr(\varphi) : \mu_{X'}\circ \Gr(F) \Longrightarrow
 (\Gr(F)\otimes 1)\circ \mu_{X},
 \]
 namely a morphism
 \begin{multline*}
 \Gr(\varphi)(x,i) : 1 \to (\Gr(X')\otimes(I\otimes
  1))((\mu_{X'}\circ\Gr(F))(x,i), 
  (\Gr(F)\otimes 1)\circ\mu_{X}(x,i)) \\
  = X'(i)(F(x),F(x))\otimes (\Mor_I(i,i)\otimes 1)
 \end{multline*}
 by the identity morphism
 \[
  \Gr(\varphi)(x,i) = 1_{F(x)}\otimes 1_i.
 \]
\end{definition}

\begin{lemma}
 The pair $(\Gr(F,\varphi),\Gr(\varphi))$ defines a left morphism
 \[
  (\Gr(F,\varphi),\Gr(\varphi)) : (\Gr(X),\mu_{X}) \longrightarrow
 (\Gr(X'),\mu_{X'}) 
 \]
 in $\lcats{\bm{V}}_I$.
\end{lemma}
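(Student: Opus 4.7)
The plan is to verify three things in turn: (i) that $\Gr(\varphi)(x,i) = 1_{F(i)(x)} \otimes 1_i$ has the correct type as the $(x,i)$-component of a $\bm{V}$-natural transformation $\mu_{X'} \circ \Gr(F,\varphi) \Rightarrow (\Gr(F,\varphi)\otimes 1) \circ \mu_X$, (ii) that the enriched naturality square for $\Gr(\varphi)$ commutes, and (iii) that the pair $(\Gr(F,\varphi), \Gr(\varphi))$ is degree preserving in the sense of the preceding definition.

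For (i), I would observe that both composites send the object $(x,i)$ of $\Gr(X)$ to the object $(F(i)(x),\,i,\,i)$ of $\Gr(X') \otimes (I\otimes 1)$. Hence $1_{F(i)(x)}\otimes 1_i$ sits in the $1_i$-component of the endomorphism object $(\Gr(X')\otimes (I\otimes 1))((F(i)(x),i,i),(F(i)(x),i,i))$, which is exactly the required target of a natural-transformation component at $(x,i)$.

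For (ii), the key observation is that the two composites $\mu_{X'}\circ\Gr(F,\varphi)$ and $(\Gr(F,\varphi)\otimes 1)\circ\mu_X$ coincide as $\bm{V}$-functors, not merely up to a nontrivial natural transformation. Indeed, on the summand $X(j)(X(u)(x),y) \hookrightarrow \Gr(X)((x,i),(y,j))$ indexed by a morphism $u:i\to j$ in $I$, both composites produce the same element of the summand $Y(j)(Y(u)(F(i)(x)),F(j)(y)) \otimes (u\otimes 1)$: the map $\Gr(F,\varphi)$ acts on this summand by $\varphi(u)^*\circ F(j)$ by Definition \ref{Gr(F)_definition}, while $\mu$ on either side inserts the indexing morphism $u$ into the $(I\otimes 1)$-factor. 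Once this equality of functors is in hand, the enriched naturality diagram for $\Gr(\varphi)$ collapses to the assertion that the identity $2$-cell $1_{F(i)(x)}\otimes 1_i$ is natural in $(x,i)$, which is automatic.

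For (iii), the component $\Gr(\varphi)(x,i)$ is by construction of the form $1 \otimes \varphi_2(x,i)$ with $\varphi_2(x,i) = 1_i \in \Mor_I(i,i)$, a morphism of $I$ rather than of $I\otimes 1$, so the degree-preservation condition is tautological. I do not anticipate a serious obstacle: the content of the lemma is the identification of the two composites in step (ii), and this reduces to a bookkeeping verification, component by component over the coproduct decomposition $\bigoplus_{u:i\to j} X(j)(X(u)(x),y)$, using the way $\mu_X$ and $\Gr(F,\varphi)$ are defined summand-wise in terms of the indexing morphism $u$.
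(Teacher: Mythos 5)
Your proposal is correct and follows the same route as the paper: the paper simply asserts that the comodule-morphism condition is "easy to check" and then records that $\Gr(\varphi)(x,i)=1_{F(i)(x)}\otimes 1_i$ with $1_i\in\Mor_I(i,i)$ homogeneous, which is your step (iii). Your step (ii) — that $\mu_{X'}\circ\Gr(F,\varphi)$ and $(\Gr(F,\varphi)\otimes 1)\circ\mu_X$ agree summand-by-summand over $\bigoplus_{u:i\to j}$, so the identity $2$-cell is automatically natural — is exactly the verification the paper leaves implicit.
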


\begin{proof}
 It is easy to check that $(\Gr(F,\varphi),\Gr(\varphi))$ is a
 left morphism in $\rlComod{(I\otimes 1)}$.

 For an object $(x,i)$ in $\Gr(X)$, define
 \[
  \Gr(\varphi)_2(x,i) = 1_i : 1 \longrightarrow (I\otimes 1)(i,i).
 \]
 Then $\Gr(\varphi)(x,i) = 1_{F(x)}\otimes \Gr(\varphi)_2(x,i)$ and we
 have a degree preserving left morphism.
\end{proof}

 Let
 \[
  (F,\varphi),(G,\psi) : X \longrightarrow X'
 \]
 be morphisms of oplax functors.
 For a $2$-morphism
 \[
  \theta : (F,\varphi) \Longrightarrow (G,\psi)
 \]
 in $\lOplax(I,\categories{\bm{V}})$, we have defined a
 $\bm{V}$-natural transformation
 \[
  \Gr(\theta) : \Gr(F,\varphi) \Longrightarrow \Gr(G,\psi)
 \]
in Definition \ref{Gr(theta)}.

\begin{lemma}
 \label{Gr_as_functor_to_comma_categories}
  The above constructions define a functor
 \[
 \Gr : \lOplax(I,\categories{\bm{V}})(X,X')
 \longrightarrow (\lcats{\bm{V}}_I)(\Gr(X),\Gr(X')).   
 \]
\end{lemma}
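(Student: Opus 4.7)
The plan is to verify the three things needed for the assignment $\theta \mapsto \Gr(\theta)$ to be a functor between the indicated hom-categories: (a) that $\Gr(\theta)$ really does land in $(\lcats{\bm{V}}_I)(\Gr(X),\Gr(X'))$, i.e.\ that it is a $2$-morphism in the $2$-category of left $I$-graded $\bm{V}$-categories between the $1$-morphisms $\Gr(F,\varphi)$ and $\Gr(G,\psi)$; (b) that $\Gr$ preserves identity $2$-morphisms; and (c) that $\Gr$ preserves vertical composition of $2$-morphisms.

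For (a), the key point is the compatibility square from Definition \ref{2-category_of_comodules}: I must check that
\[
 \xymatrix{
  \mu_{X'}\circ \Gr(F,\varphi) \ar@{=>}[r]^{\Gr(\varphi)}
   \ar@{=>}[d]_{\mu_{X'}\circ \Gr(\theta)} &
   (\Gr(F,\varphi)\otimes 1)\circ \mu_X
   \ar@{=>}[d]^{(\Gr(\theta)\otimes 1)\circ \mu_X} \\
  \mu_{X'}\circ \Gr(G,\psi) \ar@{=>}[r]^{\Gr(\psi)} &
   (\Gr(G,\psi)\otimes 1)\circ \mu_X
 }
\]
commutes after evaluation on each object $(x,i)\in\Gr(X)_0$. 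By construction $\Gr(\varphi)(x,i)=1_{F(i)(x)}\otimes 1_i$ and $\Gr(\psi)(x,i)=1_{G(i)(x)}\otimes 1_i$, while $\Gr(\theta)(x,i)=(\theta(i)(x),1_i)$. Both routes around the square therefore reduce to the element $\theta(i)(x)\otimes 1_i$, so commutativity is essentially tautological. I also need to record that $\Gr(\theta)$ sits ``above the identity of $I$'' in the sense that its second component is $1_i$, which is exactly the degree-preservation condition required for $2$-morphisms in $\lcats{\bm{V}}_I$.

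For (b) and (c), these follow routinely from the componentwise definition of $\Gr(\theta)$: since $\Gr(\mathrm{id}_{(F,\varphi)})(x,i)=(1_{F(i)(x)},1_i)=1_{\Gr(F,\varphi)(x,i)}$, identities go to identities; and for $\theta : (F,\varphi)\Rightarrow(G,\psi)$ and $\xi:(G,\psi)\Rightarrow(H,\chi)$, both $\Gr(\xi\circ\theta)(x,i)$ and $(\Gr(\xi)\circ\Gr(\theta))(x,i)$ equal $((\xi(i)\circ\theta(i))(x),1_i)$, using that vertical composition of $\bm{V}$-natural transformations in $\lcats{\bm{V}}_I$ is given by vertical composition of the underlying $\bm{V}$-natural transformations (Definition \ref{2-category_of_comodules}).

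The only step that requires a bit of care is (a): one must expand the definition of the horizontal whiskering $\mu_{X'}\circ\Gr(\theta)$ and $\Gr(\theta)\otimes 1$ and use that $\mu_{X'}$ is identity on the first factor and sends the morphism component $f \in X'(j)(X'(u)(\bullet),\bullet)$ to $f\otimes u$, so that both composites in the diagram evaluate to $\theta(i)(x)\otimes 1_i$ on objects and behave symmetrically on morphisms. I expect no genuine obstacle beyond bookkeeping, since the second component of $\Gr(\theta)$ is the identity $1_i$, which is precisely what forces the comodule-compatibility square to collapse.
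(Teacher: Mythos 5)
Your proposal is correct and follows essentially the same route as the paper's proof: verify that $\Gr(\theta)$ satisfies the comodule-compatibility square of Definition \ref{2-category_of_comodules} (which collapses because both composites evaluate to $\theta(i)(x)\otimes 1_i$), and then check preservation of identities and of vertical composition, the latter reducing to the componentwise definition of composition of $\bm{V}$-natural transformations. One minor remark: in Definition \ref{2-category_of_graded_category_definition} the degree-preservation condition is imposed only on $1$-morphisms of $\lcats{\bm{V}}_I$, so your extra check that the second component of $\Gr(\theta)$ is $1_i$ is harmless but not actually required of a $2$-morphism.
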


\begin{proof}
 We need to check the following:
 \begin{enumerate}
  \item  For a $2$-morphism $\theta$ in
	 $\lOplax(I,\categories{\bm{V}})$, $\Gr(\theta)$ is a
	 $2$-morphism in $\lComod{(I\otimes 1)}$, 
	 i.e.\, it makes the following diagram commutative 
	 \[
	 \xymatrix{%
	 \mu_{X'}\circ \Gr(F,\varphi) \ar@{=>}^{\Gr(\varphi)}[r]
	 \ar@{=>}_{\mu_{X'}\circ \Gr(\theta)}[d] &
	 (F\otimes 1)\circ \mu_X \ar@{=>}^{(\Gr(\theta)\otimes 1)\circ
	 \mu_X}[d] \\ 
	 \mu_{X'}\circ \Gr(G,\psi) \ar@{=>}^{\Gr(\psi)}[r] & (G\otimes
	 1)\circ \mu.
	 }
	 \]
  \item $\Gr(1_{(F,\varphi)})=1_{(\Gr(F,\varphi),\Gr(\varphi))}$ for a
	$1$-morphism $(F,\varphi)$ in
	$\lOplax(I,\categories{\bm{V}})$. 
  \item For $2$-morphisms
	\[
	 \xymatrix{%
	(F_0,\varphi_0) \ar@{=>}^{\theta_1}[r] & (F_1,\varphi_1)
	\ar@{=>}^{\theta_2}[r] & (F_2,\varphi_2), 
	}
	\]
	we have
	\[
	 \Gr(\theta_2\circ\theta_1) = \Gr(\theta_2)\circ \Gr(\theta_1).
	\]
 \end{enumerate}

 The first and second parts are obvious from the definition of
 $\Gr(\theta)$. For the 
 third part, recall that both compositions $\theta_2\circ \theta_1$ and
 $\Gr(\theta_2)\circ\Gr(\theta_1)$ are compositions of natural
 transformations. The equality follows from the definition of the
 composition of natural transformations.
\end{proof}

\begin{proposition}
 \label{Gr_as_graded_category}
 The above constructions make the Grothendieck construction into a
 $2$-functor 
 \[
 \Gr : \lOplax(I,\categories{\bm{V}}) \longrightarrow
 \lcats{\bm{V}}_I \subset \rlComod{(I\otimes 1)}.
 \]
\end{proposition}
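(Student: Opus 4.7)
The plan is to verify the three (strict) $2$-functor axioms: preservation of identity $1$-morphisms, preservation of composition of $1$-morphisms, and local functoriality on hom-categories. The last of these is exactly Lemma \ref{Gr_as_functor_to_comma_categories}, so what remains are the two $1$-categorical equalities at the level of $\sk_1$.

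First I would check that $\Gr$ sends the identity transformation $(1_X, 1_X) : X \to X$ to the identity $1$-morphism on $(\Gr(X),\mu_X)$. On objects this is immediate since $\Gr(1_X, 1_X)(x,i) = (x,i)$, and on hom-components the map $X(j)(X(u)(x),y) \to X(j)(X(u)(x),y)$ built in Definition \ref{Gr(F)_definition} reduces to the identity once $F(j)$ and $\varphi(u)^*$ are replaced by identities. The natural transformation $\Gr(1_X)(x,i) = 1_x \otimes 1_i$ is visibly the identity natural transformation, and the homogeneity witness $\Gr(1_X)_2(x,i) = 1_i$ lies in $I$, so the resulting $1$-morphism is the degree-preserving identity in $\lcats{\bm{V}}_I$.

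Next I would check strict compatibility with composition. Given composable left transformations
$$ X \xrightarrow{(F,\varphi)} X' \xrightarrow{(G,\psi)} X'' $$
with composite $(G\circ F,\psi\circ\varphi)$ (as recalled in \S\ref{2-category}), I want
$$\Gr\bigl((G,\psi)\circ(F,\varphi)\bigr) \;=\; \Gr(G,\psi)\circ \Gr(F,\varphi)$$
as $1$-morphisms in $\lcats{\bm{V}}_I$. On objects both sides send $(x,i)$ to $(G(i)(F(i)(x)),i)$. On the $u$-component of hom-objects, $\Gr(G,\psi)\circ\Gr(F,\varphi)$ is the composite
\begin{align*}
X(j)(X(u)(x),y) &\xrightarrow{F(j)} X'(j)(F(j)X(u)(x),F(j)(y)) \xrightarrow{\varphi(u)^*} X'(j)(X'(u)F(i)(x),F(j)(y)) \\
&\xrightarrow{G(j)} X''(j)(G(j)X'(u)F(i)(x),G(j)F(j)(y)) \xrightarrow{\psi(u)^*} X''(j)(X''(u)G(i)F(i)(x),G(j)F(j)(y)),
\end{align*}
which by naturality of $\psi(u)^*$ in its first argument (and functoriality of $G(j)$) equals the formula obtained from the composite left transformation $(G\circ F, \psi\circ \varphi)$. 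For the natural transformation components, both $\Gr(\psi\circ\varphi)(x,i)$ and the pasted composite $\bigl((\Gr(G,\psi)\otimes 1)\circ \Gr(\varphi)\bigr)\ast \Gr(\psi)$ evaluate to $1_{G(F(x))}\otimes 1_i$, again matching.

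The one nontrivial step — and what I expect to be the main bookkeeping obstacle — is the naturality chase for the hom-composition above: one must commute $\psi(u)^*$ past $G(j)$, which boils down to the $\bm{V}$-naturality of $\psi(u) : X''(u)\circ G(i) \Rightarrow G(j)\circ X'(u)$ evaluated on the morphism $\varphi(u)^*\circ F(j)$, together with associativity of the $\bm{V}$-enriched composition. Both are purely formal, so combined with Lemma \ref{Gr_as_functor_to_comma_categories} this yields the asserted strict $2$-functoriality of $\Gr$ into $\lcats{\bm{V}}_I \subset \rlComod{(I\otimes 1)}$.
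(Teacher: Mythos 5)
Your proposal is correct and follows essentially the same route as the paper: local functoriality is delegated to Lemma \ref{Gr_as_functor_to_comma_categories}, and the remaining checks are preservation of identities (immediate) and of composition of left transformations, where the paper likewise verifies the object part explicitly and reduces the hom-component equality to the definition of composite left transformations together with functoriality of the $\bm{V}$-functors involved. Your write-up is in fact slightly more explicit than the paper's on the hom-component naturality chase and on the degree-preserving witnesses.
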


\begin{proof}
 It remains to prove the following:
 \begin{enumerate}
  \item The following diagram is commutative
	\begin{center}
	 \scalebox{.85}{%
	 $\begin{diagram}
	   \node{\lOplax(I,\categories{\bm{V}})(X',X'')\times
	   \lOplax(I,\categories{\bm{V}})(X,X')} \arrow{e,t}{\circ}
	   \arrow{s,l}{\Gr\times\Gr}
	   \node{\lOplax(I,\categories{\bm{V}})(X,X'')} \arrow{s,r}{\Gr}
	   \\  
	   \node{(\lcats{\bm{V}}_I)(\Gr(X'),\Gr(X''))
	 \times (\lcats{\bm{V}}_I)(\Gr(X),\Gr(X'))} \arrow{e,t}{\circ}
	   \node{(\lcats{\bm{V}}_I)(\Gr(X),\Gr(X'')). } 
	  \end{diagram}$
	 }
	\end{center}
  \item The following diagram is commutative
	\[
	 \begin{diagram}
	  \node{1} \arrow{e} \arrow{se}
	  \node{\lOplax(I,\categories{\bm{V}})(X,X)}
	  \arrow{s,r}{\Gr} \\
	  \node{} \node{(\lcats{\bm{V}}_I)(\Gr(X),\Gr(X)),}  
	 \end{diagram}
	\]
	where $1$ in the diagram is the trivial category.
 \end{enumerate}

 For morphisms of oplax functors
 \[
  X \rarrow{(F,\varphi)} X' \rarrow{(F',\varphi')} X'',
 \]
 the composition $(F',\varphi')\circ(F,\varphi)$ is given by
 \[
  (F',\varphi')\circ(F,\varphi) = (F'\circ F, \varphi'\circ\varphi)
 \]
 and
 \[
  \Gr((F',\varphi')\circ(F,\varphi)) = (\Gr(F'\circ F,
   \varphi'\circ\varphi), \Gr(\varphi'\circ\varphi)).
 \]
 For $(x,i)\in \Gr(X)_0$,
 \[
  \Gr(F'\circ F,\varphi'\circ \varphi)(x,i) = ((F'\circ F)(i)(x),i) =
 (F'(i)(F(i)(x)),i) = \Gr(F')(\Gr(F)(x,i)).
 \]
 For morphisms in $\Gr(X)$, the compatibility of the compositions and
 $\Gr$ follows from the definition of morphisms of oplax functors.

 The second part is obvious.
\end{proof}

We also obtain a $2$-functor
\[
 \Gr : \rLax(I^{\op},\categories{\bm{V}}) \longrightarrow
 \rcats{\bm{V}}_I. 
\]
The details are omitted.

\section{Comma Categories and the Smash Product Construction}
\label{comma_category}

Given a functor
\[
 p : E \longrightarrow I
\]
of small categories, the inverse image $p^{-1}(x)$ of $x\in I_0$
is a natural candidate for the fiber over $x$. There are two more ways
to take fibers, which should be regarded as ``homotopy fibers'', i.e.\
comma categories $p\downarrow x$ and $x\downarrow p$, thanks to the
work of Quillen \cite{Quillen73}.

In this section, we extend definitions of these fibers to enriched
contexts. 

\subsection{Fibers of Gradings}
\label{enriched_fiber}



As we have seen in \S\ref{graded_category}, a correct enriched analogue
of a functor 
\[
 p : E \longrightarrow I
\]
is a grading
\[
 \mu : E \longrightarrow E\otimes (I\otimes 1).
\]
In this section, we define fibers of a grading.

\begin{definition}
 Let $I$ be a small category and
 \[
  \mu : E \longrightarrow E\otimes (I\otimes 1)
 \]
 be an $I$-grading. For each object $i\in I_0$, define a
 $\bm{V}$-category $E|_i$ by
 \[
  (E|_i)_0 = \lset{e\in E_0}{\mu(e)=(e,i)}
 \]
 and, for $e,e'\in (E|_i)_0$, $(E|_i)(e,e')$ is defined by the
 following pullback diagram 
 \[
  \begin{diagram}
   \node{(E|_i)(e,e')} \arrow{e} \arrow{s} \node{E(e,e')}
   \arrow{s,r}{p} \\  
   \node{E(e,e')\otimes 1} \arrow{e,t}{1\otimes 1_i}
   \node{E(e,e')\otimes(I(i,i)\otimes 1).} 
  \end{diagram}
 \]
 The composition is defined by the following diagram
 \begin{center}
  \scalebox{.7}{$
  \begin{diagram}
   \node{(E|_i)(e',e'')\otimes (E|_i)(e,e')} \arrow[3]{e}
   \arrow[3]{s} \arrow{se,..} \node{} 
   \node{} \node{E(e',e'')\otimes E(e,e')} \arrow[3]{s} \arrow{sw} \\
   \node{} \node{(E|_i)(e,e'')}\arrow{e} \arrow{s} \node{E(e,e'')}
   \arrow{s} \node{} \\ 
   \node{} \node{E(e,e'')\otimes 1} \arrow{e} \node{E(e,e'')\otimes
   I(i,i)\otimes 1} \node{} \\ 
   \node{E(e',e'')\otimes 1\otimes E(e,e')\otimes 1} \arrow[3]{e}
   \arrow{ne} 
   \node{} \node{} \node{E(e',e'')\otimes I(i,i)\otimes 1 \otimes
   E(e,e')\otimes I(i,i)\otimes 1.} \arrow{nw}
  \end{diagram}$}
 \end{center}
\end{definition}

The following is an analogue of $p\downarrow i$.

\begin{definition}
 Let
 \[
  \mu : E \longrightarrow E\otimes(I\otimes 1)
 \]
 be an $I$-grading and define
 \[
  p = \pr_2\circ \mu_0 : E_0 \longrightarrow E_0\times I_0
 \longrightarrow I_0.
 \]

 For each object $i\in I_0$, define a $\bm{V}$-category
 $\mu\downarrow i$ as follows. Objects are defined by
 \[
  (\mu\downarrow i)_0 = \coprod_{e\in E_0} \{e\}\times \Mor_{I}(p(e),i).
 \]

 For $(e,u), (e',u')\in (\mu\downarrow i)_0$, define an object
 $(\mu\downarrow i)((e,u),(e',u'))$ in $\bm{V}$ by the following pullback
 diagram in $\bm{V}$
 \[
  \begin{diagram}
   \node{(\mu\downarrow i)((e,u),(e',u'))} \arrow{e} \arrow[3]{s}
   \node{E(e,e')} \arrow{s,r}{\mu} \\
   \node{} \node{(E\otimes(I\otimes 1))((e,p(e)),(e',p(e')))} 
   \arrow{s,=} \\
   \node{} \node{E(e,e')\otimes (I\otimes 1)(p(e),p(e'))}
   \arrow{s,r}{1\otimes u'_*} \\
   \node{E(e,e')\otimes 1} \arrow{e,t}{1\otimes u^*}
   \node{E(e,e')\otimes 
   (I\otimes 1)(p(e),i).}
  \end{diagram}
 \]

 In $\mu\downarrow i$, the identity morphisms are defined by
 \[
  \begin{diagram}
   \node{1} \arrow{se,..} \arrow{ese,t}{1} \arrow{sse,b}{1_e} \node{}
   \node{} \\ 
   \node{} \node{(\mu\downarrow i)((e,u),(e,u))} \arrow{e} \arrow{s}
   \node{E(e,e)} \arrow{s,r}{u_*\circ\mu}  \\ 
   \node{} \node{E(e,e)\otimes 1} \arrow{e,t}{u} \node{E(e,e)\otimes
   (I\otimes 1)(p(e),i).} 
   \end{diagram}
 \]

 The composition
 \[
  (\mu\downarrow i)((e',u'),(e'',u''))\otimes (\mu\downarrow
 i)((e,u),(e',u')) 
 \longrightarrow (\mu\downarrow i)((e,u),(e'',u''))
 \]
 is given by the following diagram
  \begin{center}
   \rotatebox{270}{%
  \scalebox{.6}{%
  $\begin{diagram}
   \node{(\mu\downarrow i)((e',u'),(e'',u''))\otimes
    (\mu\downarrow i)((e,u),(e',u'))} \arrow[2]{s} \arrow{se} \arrow[3]{e}
    \node{} 
    \node{} 
   \node{E(e',e'')\otimes E(e,e')} \arrow{sw,t}{\mu\otimes\mu}
    \arrow{s,r}{\circ} \\  
   \node{} \node{(\mu\downarrow i)((e',u'),(e'',u''))\otimes
    (E\otimes (I\otimes 1))((e,p(e)),(e',p(e')))} \arrow{e}
    \arrow{s}  
    \node{(E\otimes (I\otimes 1))((e',p(e')),(e'',p(e'')))
    \otimes (E\otimes (I\otimes 1))((e,p(e)),(e',p(e')))}
    \arrow{se,t}{\circ} \arrow{s,r}{u''_*\otimes 1}
    \node{E(e,e'')} \arrow{s,r}{\mu} \\ 
    \node{(E(e',e'')\otimes 1)\otimes (\mu\downarrow i)((e,u),(e',u'))}
    \arrow{s} \arrow{e}
    \node{(E(e',e'')\otimes 1)\otimes (E\otimes (I\otimes
    1))((e,p(e)),(e',p(e')))} 
    \arrow{se,t}{1\otimes u'_*} 
    \arrow{e,t}{(1\otimes u')\otimes 1}
    \node{(E(e',e'')\otimes (I\otimes 1)(p(e'),i))\otimes
    (E\otimes (I\otimes 1))((e,p(e)),(e',p(e')))} \arrow{sse,t}{\circ}
    \node{E(e,e'')\otimes (I\otimes 1)(p(e),p(e''))}
    \arrow[2]{s,r}{1\otimes u''_*} \\
    \node{(E(e',e'')\otimes 1)\otimes (E(e,e')\otimes 1)}
    \arrow{s,l}{\circ} \arrow[2]{e,t}{1\otimes (1\otimes u)}
    \node{} 
    \node{(E(e',e'')\otimes 1)\otimes (E(e,e')\otimes
    (I\otimes 1)(p(e),i))} \arrow{se,t}{\circ} 
    \node{} \\ 
    \node{E(e,e'')\otimes 1} \arrow[3]{e,t}{1\otimes u} \node{} \node{}
    \node{E(e,e'')\otimes (I\otimes 1)(p(e),i).}
  \end{diagram}$}}
 \end{center}
\end{definition}

\begin{definition}
 Let
 \[
  \mu : E \longrightarrow E\otimes (I\otimes 1)
 \]
 be an $I$-grading and define
 \[
  p = \pr_2\circ\mu : E_0 \longrightarrow E_0\times I_0 \longrightarrow
 I_0. 
 \]
 For each $i\in I_0$, define a $\bm{V}$-category
 $i\downarrow \mu$ as follows. Objects are defined by
 \[
 (i\downarrow \mu)_0 = \coprod_{e\in E_0} \Mor_{I}(i,p(e))\times \{e\}.  
 \]

 Define $(i\downarrow \mu)((e,u),(e',u'))$ by the following
 pullback diagram
 \[
  \begin{diagram}
   \node{(i\downarrow \mu)((u,e),(u',e'))} \arrow{e} \arrow[3]{s}
   \node{E(e,e')} \arrow{s,r}{\mu} \\
   \node{} \node{(E\otimes(I\otimes 1))((e,p(e)),(e',p(e')))} 
   \arrow{s,=} \\
   \node{} \node{E(e,e')\otimes (I\otimes 1)(p(e),p(e'))}
   \arrow{s,r}{1\otimes u^*} \\
   \node{E(e,e')\otimes 1} \arrow{e,t}{1\otimes u'_*}
   \node{E(e,e')\otimes (I\otimes 1)(i,p(e')).}
  \end{diagram}
 \]

 The identities and compositions are defined analogously to the case of
 $\mu\downarrow i$.
\end{definition}

It follows from the fact that $\mu$ is a functor satisfying
coassociativity and counitality that $\mu\downarrow i$ is a
$\bm{V}$-category for each $i\in I_0$. 

\begin{remark}
 $(\mu\downarrow i)((e,u),(e',u'))$ can be also defined by an equalizer 
 \[
  \xymatrix{
 (\mu\downarrow i)((e,u),(e',u')) \ar[r] & E(e,e')
 \ar@<1ex>[r]^(.34){u_*\circ\mu} 
 \ar@<-1ex>[r]_(.34){u} & 
 E(e,e')\otimes (I\otimes 1)(p(e),i)}
 \]
\end{remark}

These ``homotopy fibers'' define functors on $I$.

\begin{definition}
 Let $E$ be a right $I$-graded category. Then define
 \[
  \overleftarrow{\Gamma}(\mu) : I \longrightarrow \categories{\bm{V}}
 \]
 as follows. For $i\in I_0$
 \[
  \overleftarrow{\Gamma}(\mu)(i) = \mu\downarrow i.
 \]
 For a morphism $u : i\to i'$ in $I$, define a
 $\bm{V}$-functor 
 \[
 \overleftarrow{\Gamma}(\mu)(u) : \overleftarrow{\Gamma}(\mu)(i) \longrightarrow
 \overleftarrow{\Gamma}(\mu)(i') 
 \]
 as follows. For an object $(e,v)$ in
 $\overleftarrow{\Gamma}(\mu)(i)=\mu\downarrow i$, define
 \[
  \overleftarrow{\Gamma}(\mu)(u)(e,v) = (e,u\circ v).
 \]
 The morphism
 \[
  \overleftarrow{\Gamma}(\mu)(u) : \overleftarrow{\Gamma}(i)((e,v),(e',v'))
 \longrightarrow \overleftarrow{\Gamma}(\mu)(i')((e,u\circ v),(e',u\circ v'))
 \]
 is defined by the following commutative diagram
 \[
 \begin{diagram}
  \node{\overleftarrow{\Gamma}(\mu)(i)((e,v),(e',v'))} \arrow{e} \arrow[2]{s}
  \node{E(e,e')} \arrow{s,r}{\mu} \\
  \node{} \node{E(e,e')\otimes (I\otimes 1)(p(e),p(e'))}
  \arrow{s,r}{1\otimes v'_*}
  \arrow{sse,t}{1\otimes (u\circ v')_*} \\
  \node{E(e,e')\otimes 1} \arrow{ese,b}{1\otimes (u\circ v)}
  \arrow{e,t}{1\otimes v}
  \node{E(e,e')\otimes (I\otimes 1)(p(e),i)} 
  \arrow{se,t}{1\otimes u_*} \\
  \node{} \node{} \node{E(e,e')\otimes (I\otimes 1)(p(e),i').}
 \end{diagram}
 \]

 Dually, for an $I$-graded category
 \[
  \mu : E \longrightarrow E\otimes (I\otimes 1),
 \]
 define
 \[
  \overrightarrow{\Gamma}(\mu) : I^{\op} \longrightarrow \categories{\bm{V}}
 \]
 by
 \[
  \overrightarrow{\Gamma}(\mu)(i) = i\downarrow\mu
 \]
 on objects and 
 \[
  \overrightarrow{\Gamma}(\mu)(u)(e,v) = (e,v\circ u).
 \]
 for a morphism $u : i'\to i$.
\end{definition}

\begin{lemma}
 $\overleftarrow{\Gamma}(\mu)$ and $\overrightarrow{\Gamma}(\mu)$ are functors.
\end{lemma}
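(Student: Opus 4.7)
The plan is to verify the functor axioms for $\overleftarrow{\Gamma}(\mu)$; the argument for $\overrightarrow{\Gamma}(\mu)$ is completely analogous by reversing arrows in $I$.

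First, I need to check that $\overleftarrow{\Gamma}(\mu)(u) : \mu\downarrow i \to \mu\downarrow i'$ is itself a $\bm{V}$-functor for each $u : i \to i'$. The action on objects $(e,v) \mapsto (e, u\circ v)$ is clear, and the morphism map on hom-objects is well-defined because the diagram in the definition commutes: both legs factor through $1\otimes (u\circ v')_* \circ \mu$ and $1\otimes (u\circ v)$ respectively, and the universal property of the pullback defining $\overleftarrow{\Gamma}(\mu)(i')((e,u\circ v),(e',u\circ v'))$ produces a unique morphism from $\overleftarrow{\Gamma}(\mu)(i)((e,v),(e',v'))$. Compatibility with identities and composition in $\mu\downarrow i$ then follows from the corresponding properties in $E$ together with the universal properties of the pullbacks: any morphism out of a pullback is determined by its composition with the pullback projections, and both $\overleftarrow{\Gamma}(\mu)(u)$ and the comparison morphisms agree when projected to $E(e,e')$.

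Next I would check the two functoriality conditions for $\overleftarrow{\Gamma}(\mu)$ itself. For identities, $\overleftarrow{\Gamma}(\mu)(1_i)(e,v) = (e, 1_i\circ v) = (e,v)$ by the identity axiom for $I$, and on hom-objects the induced map coincides with the identity by the uniqueness clause in the universal property (both $\overleftarrow{\Gamma}(\mu)(1_i)$ and the identity satisfy the same defining relations, since post-composing with $1_i$ in $I$ leaves $v$ and $v'$ unchanged). For composition, given $i \rarrow{u} i' \rarrow{u'} i''$ we have
\[
 \overleftarrow{\Gamma}(\mu)(u'\circ u)(e,v) = (e,(u'\circ u)\circ v) = (e,u'\circ (u\circ v)) = \overleftarrow{\Gamma}(\mu)(u')\circ\overleftarrow{\Gamma}(\mu)(u)(e,v)
\]
by associativity in $I$, and on hom-objects the equality again follows from the universal property of the pullback defining $\overleftarrow{\Gamma}(\mu)(i'')((e,u'\circ u\circ v),(e',u'\circ u\circ v'))$, since the two candidate morphisms from $\overleftarrow{\Gamma}(\mu)(i)((e,v),(e',v'))$ agree after projection to $E(e,e')$ and both satisfy the pullback compatibility condition dictated by $(u'\circ u)_* = u'_*\circ u_*$.

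The dual statement for $\overrightarrow{\Gamma}(\mu)$ is proved by the symmetric argument, replacing post-composition $u_*$ with pre-composition $u^*$ throughout and reversing the direction of morphisms in $I$.

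The main obstacle is purely notational bookkeeping through the rather large pullback diagrams appearing in the definitions of $(\mu\downarrow i)((e,u),(e',u'))$ and the composition law; once one recognizes that all the required equalities hold after projection to $E(e,e')$ and that the remaining compatibility conditions are formal consequences of the coassociativity and counitality of $\mu$ together with the associativity of composition in $I$, the verification is routine.
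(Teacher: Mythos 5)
Your proof is correct and follows essentially the same route as the paper: the identity axiom is verified by observing that post-composition with $1_i$ acts as the identity in the defining diagram, and preservation of composition follows from the uniqueness clause in the universal property of the pullback together with associativity in $I$. The only difference is that you additionally spell out why each $\overleftarrow{\Gamma}(\mu)(u)$ is itself a well-defined $\bm{V}$-functor, which the paper treats as part of the definition.
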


\begin{proof}
 Let us check $\overleftarrow{\Gamma}(\mu)$ is a functor. The case of
 $\overrightarrow{\Gamma}(\mu)$ is analogous. For an identity morphism
 \[
  1_i : i \longrightarrow i
 \]
 in $I$,  the morphism
 \[
  \overleftarrow{\Gamma}(\mu)(1_i) : \overleftarrow{\Gamma}(\mu)(i)((e,v),(e',v'))
 \longrightarrow \overleftarrow{\Gamma}(\mu)(i)((e,v),(e',v')) 
 \]
 is defined by the following diagram
 \[
 \begin{diagram}
  \node{\overleftarrow{\Gamma}(\mu)(i)((e,v),(e',v'))} \arrow{e} \arrow[2]{s}
  \node{E(e,e')} \arrow{s,r}{\mu} \\
  \node{} \node{E(e,e')\otimes (I\otimes 1)(p(e),p(e'))}
  \arrow{s,r}{1\otimes v'_*}
  \arrow{sse,t}{1\otimes (1_i\circ v')_*} \\
  \node{E(e,e')\otimes 1} \arrow{ese,b}{1\otimes (1_i\circ v)}
  \arrow{e,t}{1\otimes v}
  \node{E(e,e')\otimes (I\otimes 1)(p(e),i)} 
  \arrow{se,t,=}{1\otimes (1_i)_*=1} \\
  \node{} \node{} \node{E(e,e')\otimes (I\otimes 1)(p(e),i).}
 \end{diagram}
 \]
 It follows that $\overleftarrow{\Gamma}(\mu)(1_i)$ is the identity morphism.

 For a composable morphisms
 \[
  i_0 \rarrow{u_1} i_1 \rarrow{u_2} i_2
 \]
 in $I$, the composition
 \[
  \overleftarrow{\Gamma}(\mu)(u_2)\circ \overleftarrow{\Gamma}(\mu)(u_1) :
 \overleftarrow{\Gamma}(\mu)(i_0)((i,v),(i',v')) 
 \longrightarrow \overleftarrow{\Gamma}(\mu)(i_2)((e,u_2\circ u_1\circ v),
 (e',u_2\circ u_1\circ v'))
 \]
 satisfies the universality of the pullback and we have
 \[
  \overleftarrow{\Gamma}(\mu)(u_2)\circ \overleftarrow{\Gamma}(\mu)(u_1) =
 \overleftarrow{\Gamma}(\mu)(u_2\circ u_1). 
 \]
\end{proof}

\begin{example}
 Consider the case $I$ is a group $G$ and $\bm{V}=\lMod{k}$. For a
 $G$-graded category
 \[
  \mu : A \longrightarrow A\otimes k[G]
 \]
 we obtain a functor
 \[
  \overleftarrow{\Gamma}(\mu) : G \longrightarrow \categories{k}.
 \]
 Since $G$ has a single object $\ast$, $\overleftarrow{\Gamma}(\mu)$ is
 determined by the $k$-linear category
 $\overleftarrow{\Gamma}(\mu)(\ast)$ and an 
 action of $G$ on it. 

 The $k$-linear category $\overleftarrow{\Gamma}(\mu)(\ast)$ has objects 
 \[
 \overleftarrow{\Gamma}(\mu)(\ast)_0 = \lset{(x,g)}{x\in A_0, g \in
 \Mor_{G}(\ast,\ast)} =  A_0\times G. 
 \]
 The $k$-module of morphisms $\overleftarrow{\Gamma}(\mu)(\ast)((x,g),(y,h))$ is
 given by the pullback diagram
 \[
 \begin{diagram}
 \node{} \node{\overleftarrow{\Gamma}(\mu)(\ast)((x,g),(y,h))}
  \arrow[2]{s} \arrow{e} 
 \node{A(x,y)} \arrow{s,r}{\mu} \\
 \node{} \node{} \node{A(x,y)\otimes k[G]} \arrow{s,r}{h_*} \\
 \node{} \node{A(x,y)\otimes 1} \arrow{e,t}{1\otimes g}
  \node{A(x,y)\otimes k[G].}
 \end{diagram}
 \]
 Since $\mu$ is $G$-grading, we have a coproduct decomposition
 \[
  A(x,y) = \bigoplus_{g\in G} A^g(x,y)
 \]
 and, for $f\in A^g(x,y)$, $\mu(f)$ is given by
 \[
  \mu(f) = f\otimes g.
 \]
 Thus we have an identification
 \[
  \overleftarrow{\Gamma}(\mu)(\ast)((x,g),(y,h)) \cong A^{h^{-1}g}(x,y)
 \]
 for $g,h\in G$. The is the smash product construction in
 \cite{math/0312214,0807.4706,0905.3884}. 
\end{example}

Let us extend $\overleftarrow{\Gamma}$ and $\overrightarrow{\Gamma}$ as
$2$-functors 
\begin{eqnarray*}
 \overleftarrow{\Gamma} & : & \lcats{\bm{V}}_I \longrightarrow
  \lOplax(I,\categories{\bm{V}}) \\
 \overrightarrow{\Gamma} & : & \rcats{\bm{V}}_I \longrightarrow
  \rLax(I^{\op},\categories{\bm{V}}).
\end{eqnarray*}

\begin{definition}
 Let 
 \[
 (F,\varphi) : (E,\mu) \longrightarrow (E',\mu').
 \]
 be a morphism in $\lcats{\bm{V}}_I$, i.e.\ a degree-preserving
 left morphism in $\rlComod{(I\otimes 1)}$. $\varphi$ can be written as
 \[
  \varphi(e) = 1\otimes \varphi_2(e)
 \]
 for $\varphi_2(e)\in \Mor_I(p'(F(e)),p(e))$.

 Define a left morphism of oplax functors  
 \[
  (\Gamma(F,\varphi),\Gamma(\varphi)) : \overleftarrow{\Gamma}(\mu)
 \longrightarrow \overleftarrow{\Gamma}(\mu') 
 \]
 as follows. For each $i\in I_0$, define a $\bm{V}$-functor
 \[
 \overleftarrow{\Gamma}(F,\varphi)(i) : \overleftarrow{\Gamma}(\mu)(i) \longrightarrow
 \overleftarrow{\Gamma}(\mu')(i) 
 \]
 by
 \[
  \overleftarrow{\Gamma}(F,\varphi)(i)(e,v) = (F(e),v\circ\varphi_2(e))
 \]
 for an object $(e,v)\in \overleftarrow{\Gamma}(\mu)(i)_0$ and 
 \[
  \overleftarrow{\Gamma}(F,\varphi)(i) : \overleftarrow{\Gamma}(\mu)(i)((e,v),(e',v'))
 \longrightarrow 
 \overleftarrow{\Gamma}(\mu')(i)((F(e),v\circ\varphi_2(e)),
 (F(e'),v'\circ\varphi_2(e'))) 
 \]
 is defined by the commutativity of the following diagram
 \begin{center}
  \scalebox{.9}{%
  $\begin{diagram}
   \node{\overleftarrow{\Gamma}(\mu)(i)((e,v),(e',v'))} \arrow{e} \arrow[2]{s}
    \node{E(e,e')} \arrow{s,r}{\mu}
   \arrow{e,t}{F} \node{E'(F(e),F(e'))} \arrow{s,r}{\mu'} \\ 
    \node{} \node{E(e,e')\otimes (I\otimes 1)(p(e),p(e'))}
    \arrow{s,l}{1\otimes v'_*}
    \arrow{se,t}{F\otimes \varphi(i)^*}
    \node{E'(F(e),F(e'))\otimes (I\otimes 1)(p'(F(e)),p'(F(e')))} 
   \arrow{s,r}{\varphi(i')_*} \\
   \node{E(e,e')\otimes 1} \arrow{s,l}{F} \arrow{e,t}{1\otimes v}
    \node{E(e,e')\otimes 
   (I\otimes 1)(p(e),i)} \arrow{se,b}{F\otimes \varphi_1(i)^*}
   \node{E'(F(e),F(e'))\otimes (I\otimes 
   1)(p'(F(e)),p(e'))}  \arrow{s,r}{1\otimes v'_*} \\
   \node{E'(F(e),F(e'))\otimes 1} \arrow[2]{e,t}{1\otimes
    v\circ\varphi_1(e)} \node{} 
   \node{E'(F(e),F(e'))\otimes (I\otimes 1)(p'(F(e)),i),}
  \end{diagram}$}
 \end{center}
 where the commutativity of the top right hexagon follows from the
 naturality of $\varphi$.

 For each $u : i\to i'$ in $I$ and $(e,v)\in \overleftarrow{\Gamma}(\mu)(i)_0$,
 define 
 \begin{multline*}
  \overleftarrow{\Gamma}(\varphi)(u)(e,v) : 1 \longrightarrow
 \overleftarrow{\Gamma}(\mu')(i')((\overleftarrow{\Gamma}(\mu')(u)\circ
  \overleftarrow{\Gamma}(F,\varphi))(e,v), 
 (\overleftarrow{\Gamma}(F,\varphi)(i')\circ \overleftarrow{\Gamma}(\mu)(u))(e,v)) \\
  = \overleftarrow{\Gamma}(\mu')(i')((F(e),u\circ
  (v\circ\varphi_2(e))),(F(e),(u\circ 
  v)\circ\varphi_2(e))) 
 \end{multline*}
 to be the identity.
\end{definition}

\begin{definition}
 Let
 \[
  (F,\varphi), (G,\psi) : (E,\mu) \longrightarrow (E',\mu')
 \]
 be left morphisms of $I$-graded $\bm{V}$-categories. For a $2$-morphism 
 \[
  \xi : (F,\varphi) \Longrightarrow (G,\psi)
 \]
 in $\lcats{\bm{V}}_I$, define a $2$-morphism in
 $\lOplax(I,\categories{\bm{V}})$ 
 \[
  \overleftarrow{\Gamma}(\xi) : (\overleftarrow{\Gamma}(F,\varphi),\overleftarrow{\Gamma}(\varphi))
 \Longrightarrow 
 (\overleftarrow{\Gamma}(G,\psi),\overleftarrow{\Gamma}(\psi)), 
 \]
 i.e.\ a $\bm{V}$-natural transformation
 \[
  \overleftarrow{\Gamma}(\xi) : \overleftarrow{\Gamma}(F,\varphi) \Longrightarrow
 \overleftarrow{\Gamma}(G,\psi) 
 \]
 by the following diagram
 \[
 \begin{diagram}
  \node{1} \arrow{se,t,..}{\overleftarrow{\Gamma}(\xi)(e,v)}
  \arrow[2]{e,t}{\xi(e)} 
  \arrow[3]{s,b}{\xi(e)} \node{} \node{E'(F(e),G(e))}
  \arrow[2]{s,r}{\mu} \\ 
  \node{}
  \node{\overleftarrow{\Gamma}(\mu')((F(e),v\circ\varphi_2(e)),
  (G(e),v\circ\psi_2(e)))}  
  \arrow{ne} \arrow{ssw} \node{} \\ 
  \node{} \node{} \node{E'(F(e),G(e))\otimes (I\otimes
  1)(p'(F(e)),p'(G(e)))} \arrow{s,r}{1\otimes (v\circ \psi_2(e))_*} \\ 
  \node{E'(F(e),G(e))\otimes 1} \arrow[2]{e,t}{1\otimes v\circ
  \varphi_2(e)} \node{}
  \node{E'(F(e),G(e))\otimes (I\otimes 1)(p'(F(e)),i).}
 \end{diagram}
 \]
\end{definition}

\begin{lemma}
 \label{Gamma_as_functor}
 The above constructions define a functor
 \[
  \overleftarrow{\Gamma} : \lcats{\bm{V}}_I((\mu),(\mu'))
 \longrightarrow \lOplax(I,
 \categories{\bm{V}})(\overleftarrow{\Gamma}(\mu),\overleftarrow{\Gamma}(\mu')).   
 \]
\end{lemma}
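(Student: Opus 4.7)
The plan is to verify the three conditions required for $\overleftarrow{\Gamma}$ to be a functor between hom-categories, in direct parallel with the proof of Lemma \ref{Gr_as_functor_to_comma_categories}. The three items to check are: (1) for any $2$-morphism $\xi : (F,\varphi) \Rightarrow (G,\psi)$ in $\lcats{\bm{V}}_I$, the proposed $\overleftarrow{\Gamma}(\xi)$ is a genuine $2$-morphism in $\lOplax(I,\categories{\bm{V}})$; (2) $\overleftarrow{\Gamma}(1_{(F,\varphi)}) = 1_{(\overleftarrow{\Gamma}(F,\varphi),\overleftarrow{\Gamma}(\varphi))}$; and (3) for a vertically composable pair $\xi_1, \xi_2$ of $2$-morphisms, $\overleftarrow{\Gamma}(\xi_2 \circ \xi_1) = \overleftarrow{\Gamma}(\xi_2)\circ\overleftarrow{\Gamma}(\xi_1)$.

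For (1), I need to show that for every $u : i \to j$ in $I$, the square
\[
\xymatrix{
\overleftarrow{\Gamma}(\mu')(u)\circ \overleftarrow{\Gamma}(F,\varphi)(i)
\ar@{=>}[r]^{\overleftarrow{\Gamma}(\varphi)(u)}
\ar@{=>}[d]_{\overleftarrow{\Gamma}(\mu')(u)\circ \overleftarrow{\Gamma}(\xi)(i)}
& \overleftarrow{\Gamma}(F,\varphi)(j)\circ \overleftarrow{\Gamma}(\mu)(u)
\ar@{=>}[d]^{\overleftarrow{\Gamma}(\xi)(j)\circ \overleftarrow{\Gamma}(\mu)(u)} \\
\overleftarrow{\Gamma}(\mu')(u)\circ \overleftarrow{\Gamma}(G,\psi)(i)
\ar@{=>}[r]_{\overleftarrow{\Gamma}(\psi)(u)}
& \overleftarrow{\Gamma}(G,\psi)(j)\circ \overleftarrow{\Gamma}(\mu)(u)
}
\]
commutes when evaluated at each object $(e,v)$ of $\overleftarrow{\Gamma}(\mu)(i) = \mu\downarrow i$. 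Since each component $\overleftarrow{\Gamma}(\varphi)(u)(e,v)$ and $\overleftarrow{\Gamma}(\psi)(u)(e,v)$ was defined to be the identity, this reduces to showing that the two maps from $1$ into the object of morphisms of $\overleftarrow{\Gamma}(\mu')(j)$ obtained by (a) first applying $\xi(e)$ and then translating $u$ and (b) first translating $u$ and then applying $\xi(e)$ agree. I would lift this equality through the defining pullback for $(\mu\downarrow j)(-,-)$ back to the underlying equalities in $E'$ and $(I\otimes 1)$, where the first coordinate is $\xi(e)$ either way, and the second coordinate is obtained by post-composing $\varphi_2(e)$ or $\psi_2(e)$ with $u\circ v$. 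The key point is that the degree-preservation of $\xi$ (seen as a $2$-morphism in $\rlComod{(I\otimes 1)}$) forces the $(I\otimes 1)$-components on both sides to coincide; this is where the hypotheses on $\xi$ are used.

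For (2), the identity $2$-morphism $\xi = 1_{(F,\varphi)}$ has $\xi(e) = 1_{F(e)}$ and $\varphi_2(e) = \psi_2(e)$, so the defining pullback diagram for $\overleftarrow{\Gamma}(\xi)(e,v)$ factors through the identity; uniqueness of the map into the pullback then identifies $\overleftarrow{\Gamma}(1_{(F,\varphi)})(e,v)$ with the identity on $(F(e), v\circ \varphi_2(e))$. For (3), given $\xi_1 : (F,\varphi)\Rightarrow (G,\psi)$ and $\xi_2 : (G,\psi) \Rightarrow (H,\chi)$, both $\overleftarrow{\Gamma}(\xi_2 \circ \xi_1)(e,v)$ and $\overleftarrow{\Gamma}(\xi_2)(e,v)\circ \overleftarrow{\Gamma}(\xi_1)(e,v)$ are maps into the pullback defining $(\mu'\downarrow i)(-,-)$, and their compositions with the projection to $E'(F(e),H(e))$ both recover $\xi_2(e)\circ \xi_1(e)$. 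Universality of the pullback then gives the desired equality.

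The main obstacle will be step (1), and specifically the bookkeeping needed to lift the compatibility equation through the nested pullback diagrams that define both the hom-objects of $\mu\downarrow j$ and the functor $\overleftarrow{\Gamma}(F,\varphi)(j)$; the computation is essentially forced once one notes that degree-preservation guarantees all $(I\otimes 1)$-valued data is governed by $\varphi_2$ and $\psi_2$, but writing this out requires patience with the associators and with the naturality of $\varphi, \psi$ already invoked in the definition of $\overleftarrow{\Gamma}(F,\varphi)(j)$. Steps (2) and (3) are then routine consequences of the universal property of pullbacks.
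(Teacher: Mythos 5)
Your proposal is correct and follows essentially the same route as the paper: identity preservation via the universal property of the pullback, and composition preservation by exhibiting a commutative diagram into the pullback defining $(\mu'\downarrow i)(-,-)$ and invoking its universality. Your step (1), checking that $\overleftarrow{\Gamma}(\xi)$ really is a $2$-morphism in $\lOplax(I,\categories{\bm{V}})$, is an additional verification the paper leaves implicit, but it goes through exactly as you describe because $\overleftarrow{\Gamma}(\varphi)(u)$ and $\overleftarrow{\Gamma}(\psi)(u)$ are identities and both composites are determined by the same element $\xi(e)$ under the pullback projections.
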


\begin{proof}
 By the universality of pullback, the identity natural transformation
 induces the identity. For composable $2$-morphisms
 \[
  \xymatrix{%
 (F_0,\varphi_0) \ar@{=>}[r]^{\xi_1} & (F_1,\varphi_1)
 \ar@{=>}[r]^{\xi_2} & (F_2,\varphi_2),
 }
 \]
consider the following diagram
\begin{center}
 \scalebox{.8}{
$\begin{diagram}
   \node{1} \arrow[2]{e,t}{(\xi_2\circ\xi_1)(e)}
  \arrow[3]{s,l}{(\xi_2\circ\xi_1)(e)} 
  \arrow{se,t}{\overleftarrow{\Gamma}(\xi_2)(e,v)\otimes\Gamma(\xi_1)(e,v)} 
   \node{} \node{E'(F_0(e),F_2(e))} \arrow[2]{s,=} \\ 
   \node{} \node{\overleftarrow{\Gamma}(E')((F_1(e),v\circ\varphi_{12}(e)),
   (F_2(e),v\circ\varphi_{22}(e)))
  \otimes\overleftarrow{\Gamma}(E')((F_0(e),v\circ\varphi_{02}(e)),
  (F_1(e),v\circ\varphi_{12}(e)))}
   \arrow{s,r}{} \node{} \\
   \node{}
   \node{\Gamma(E')((F_0(e),v\circ\varphi_{02}(e)),
  (F_2(e),v\circ\varphi_{02}(e)))} \arrow{sw} \arrow{e}
   \node{E'(F_0(e),F_2(e))} \arrow{s} \\  
   \node{E'(F_0(e),F_2(e))\otimes 1} \arrow[2]{e} \node{}
   \node{E'(F_0(e),F_2(e))\otimes (I\otimes 1)(p'(F_0(e)),i).} 
  \end{diagram}$}
\end{center}
 The commutativity of the diagram and the universality of the pullback
 implies that 
 \[
  (\Gamma(\xi_2)\circ\Gamma(\xi_1))(e,v) =
 \Gamma(\xi_2)(e,v)\circ\Gamma(\xi_1)(e,v). 
 \]
\end{proof}

It is not hard to see that $\overleftarrow{\Gamma}$ preserves the identity
left morphisms and compositions of left morphisms.

\begin{proposition}
 \label{Gamma_as_2-functor}
 We obtain a $2$-functor
 \[
  \overleftarrow{\Gamma} : \lcats{\bm{V}}_I \longrightarrow
 \lOplax(I,\categories{\bm{V}}). 
 \]
\end{proposition}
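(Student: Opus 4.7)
Lemma \ref{Gamma_as_functor} already establishes that, for each pair of objects $(E,\mu)$ and $(E',\mu')$ in $\lcats{\bm{V}}_I$, the assignment $\overleftarrow{\Gamma}$ on $2$-morphisms combines with its assignment on $1$-morphisms to give a functor between hom-categories. Therefore what remains is to verify the two $2$-functoriality conditions at the level of $1$-morphisms: preservation of identities and preservation of composition. Each is checked objectwise in $I$, and then the coherence $2$-cells $\overleftarrow{\Gamma}(\varphi)(u)$ are compared.

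First I would treat identities. For the identity $1$-morphism $(1_E, 1) : (E,\mu) \to (E,\mu)$, we have $\varphi_2(e) = 1_{p(e)}$ for every $e \in E_0$. Plugging into the defining diagram of $\overleftarrow{\Gamma}(F,\varphi)(i)$, the top row becomes $\mu$ followed by the identity on $E(e,e')\otimes (I\otimes 1)(p(e),p(e'))$, and the bottom row is the pullback projection $1\otimes v$ into the same object. By the universal property of the pullback defining $\overleftarrow{\Gamma}(\mu)(i)((e,v),(e',v'))$, the induced morphism is the identity. On objects, $(e,v) \mapsto (e, v\circ 1_{p(e)}) = (e,v)$. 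Hence $\overleftarrow{\Gamma}(1_E,1)(i) = 1_{\overleftarrow{\Gamma}(\mu)(i)}$, and the coherence $2$-cell $\overleftarrow{\Gamma}(1)(u)(e,v)$ is the identity at $(e, u\circ v)$ by construction, so $\overleftarrow{\Gamma}(1_E,1) = 1_{\overleftarrow{\Gamma}(\mu)}$.

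Next I would treat composition. Given composable left morphisms
\[
(E,\mu) \xrightarrow{(F,\varphi)} (E',\mu') \xrightarrow{(G,\psi)} (E'',\mu''),
\]
their composition in $\lcats{\bm{V}}_I$ has underlying functor $G\circ F$, and, unwinding Definition \ref{2-category_of_comodules} and the degree-preserving hypothesis, the natural transformation has second component $(\psi\star\varphi)_2(e) = \varphi_2(e)\circ \psi_2(F(e))$. On objects of $\overleftarrow{\Gamma}(\mu)(i)$ we then compute
\[
\overleftarrow{\Gamma}(G,\psi)(i)\circ\overleftarrow{\Gamma}(F,\varphi)(i)(e,v) = \bigl(G(F(e)), (v\circ\varphi_2(e))\circ\psi_2(F(e))\bigr),
\]
which equals $\overleftarrow{\Gamma}((G,\psi)\circ(F,\varphi))(i)(e,v)$ by associativity in $I$. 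On morphisms, both sides factor through the common pullback defining $\overleftarrow{\Gamma}(\mu'')(i)$; by the naturality of $\varphi$ and $\psi$ against $\mu, \mu', \mu''$ and the universal property, the two induced morphisms agree. Finally, since all coherence cells $\overleftarrow{\Gamma}(\varphi)(u)$ and $\overleftarrow{\Gamma}(\psi)(u)$ are identities (associativity of composition in $I$), their horizontal composite equals $\overleftarrow{\Gamma}(\psi\star\varphi)(u)$, which is likewise the identity.

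The only real obstacle is bookkeeping: keeping straight which of the four source/target grading maps $p, p'$ each $\varphi_2$ or $\psi_2$ refers to, and confirming that the big hexagonal naturality diagram in the definition of $\overleftarrow{\Gamma}(F,\varphi)$ behaves compatibly under composition. Once this pullback diagram chase is organized (say, by factoring it through the intermediate pullback over $E'$), the verification reduces to associativity in $I$ and the coaction identities for $\mu$, $\mu'$, $\mu''$, and the claim follows.
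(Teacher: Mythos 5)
Your proposal is correct and follows essentially the same route as the paper: reduce, via Lemma \ref{Gamma_as_functor}, to checking preservation of identity $1$-morphisms and of composition of $1$-morphisms, the former being immediate and the latter a pullback diagram chase resting on associativity in $I$ (the paper dismisses this last step as ``tedious but straightforward,'' whereas you supply the computation, including the correct formula $(\psi\star\varphi)_2(e)=\varphi_2(e)\circ\psi_2(F(e))$ for the composite degree datum). No gap.
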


\begin{proof}
 Strict $2$-categories are categories enriched over $\Categories$. By
 definition of enriched functors (Definition \ref{V-functor}), it
 suffices to prove that the following diagrams are commutative in 
 $\Categories$. 
 \[
  \begin{diagram}
   \node{1} \arrow{e,t}{1} \arrow{se,b}{1}
   \node{\lcats{\bm{V}}_I((\mu),(\mu))}
   \arrow{s,r}{\overleftarrow{\Gamma}} \\ 
   \node{}
   \node{\lOplax(I,\categories{\bm{V}})(\overleftarrow{\Gamma}(\mu),
   \overleftarrow{\Gamma}(\mu))}   
  \end{diagram}
 \]
 \begin{center}
  \scalebox{.7}{
  $\begin{diagram}
   \node{\lcats{\bm{V}}_I(\mu',\mu'')\times
   \lcats{\bm{V}}_I(\mu,\mu')} \arrow{e,t}{\circ}
    \arrow{s,l}{\overleftarrow{\Gamma}\times\overleftarrow{\Gamma}} 
   \node{\lcats{\bm{V}}_I(\mu,\mu'')}
    \arrow{s,r}{\overleftarrow{\Gamma}} \\  
   \node{\lOplax(I,\categories{\bm{V}})(\overleftarrow{\Gamma}(\mu'),
    \overleftarrow{\Gamma}(\mu''))\times  
   \lOplax(I,\categories{\bm{V}})(\overleftarrow{\Gamma}(\mu),
    \overleftarrow{\Gamma}(\mu'))}  
   \arrow{e,t}{\circ}
   \node{\lOplax(I,\categories{\bm{V}})(\overleftarrow{\Gamma}(\mu),
    \overleftarrow{\Gamma}(\mu''))}  
  \end{diagram}$}
 \end{center}

 The commutativity of the first diagram is obvious. It is tedious but
 straightforward to check the commutativity of
 the second diagram.
\end{proof}

Dually we have 

\begin{proposition}
 We obtain a $2$-functor
 \[
  \overrightarrow{\Gamma} : \rcats{\bm{V}}_I \longrightarrow
 \rLax(I^{\op},\categories{\bm{V}}). 
 \]
\end{proposition}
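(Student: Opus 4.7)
The plan is to mirror the construction of $\overleftarrow{\Gamma}$ in Proposition \ref{Gamma_as_2-functor}, reversing the roles of ``left'' and ``right'' and, correspondingly, the directions of the relevant $2$-morphisms. On objects, the assignment $\mu\mapsto \overrightarrow{\Gamma}(\mu)$ has already been defined, and the verification that $\overrightarrow{\Gamma}(\mu) : I^{\op}\to \categories{\bm{V}}$ is a functor dualizes the corresponding check for $\overleftarrow{\Gamma}(\mu)$ word-for-word via the pullback defining $(i\downarrow\mu)((u,e),(u',e'))$.

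Next I would extend $\overrightarrow{\Gamma}$ to $1$-morphisms. Given a degree-preserving right morphism $(F,\varphi):(E,\mu)\to (E',\mu')$ in $\rcats{\bm{V}}_I$ with $\varphi(e) = 1\otimes \varphi_2(e)$ for some $\varphi_2(e)\in \Mor_I(p(e),p'(F(e)))$, I would define, for each $i\in I_0$, a $\bm{V}$-functor
\[
\overrightarrow{\Gamma}(F,\varphi)(i) : i\downarrow\mu \longrightarrow i\downarrow\mu'
\]
on objects by $(v,e)\mapsto (\varphi_2(e)\circ v,F(e))$, and on morphisms through the universal property of the pullback defining $(i\downarrow\mu')((\varphi_2(e)\circ v, F(e)),(\varphi_2(e')\circ v',F(e')))$, using $F:E(e,e')\to E'(F(e),F(e'))$ and the naturality square for $\varphi$ to witness compatibility with $\mu'$ after pushing along $v$ and $\varphi_2$. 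For each $u : i'\to i$ in $I^{\op}$ (i.e.\ $u : i\to i'$ in $I$ read covariantly on $\overrightarrow{\Gamma}$), the $2$-morphism
\[
\overrightarrow{\Gamma}(\varphi)(u)(v,e) : 1 \to \overrightarrow{\Gamma}(\mu')(i')((\overrightarrow{\Gamma}(F,\varphi)(i')\circ \overrightarrow{\Gamma}(\mu)(u))(v,e), (\overrightarrow{\Gamma}(\mu')(u)\circ \overrightarrow{\Gamma}(F,\varphi)(i))(v,e))
\]
is declared to be the identity; the two composites agree on the underlying object $F(e)$ and on the $I$-component they land at $\varphi_2(e)\circ v\circ u$ in either order, so associativity of composition in $I$ makes this legitimate. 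For a $2$-morphism $\xi:(F,\varphi)\Rightarrow (G,\psi)$ in $\rcats{\bm{V}}_I$, I would define $\overrightarrow{\Gamma}(\xi)(v,e)$ again through the pullback describing $\overrightarrow{\Gamma}(\mu')(i)((\varphi_2(e)\circ v,F(e)),(\psi_2(e)\circ v,G(e)))$, built from $\xi(e):1\to E'(F(e),G(e))$ exactly as in the left case, but with the direction of the $\varphi$-square reversed.

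With these definitions in hand, the $2$-functoriality reduces to three bookkeeping checks, each the direct mirror of a check already carried out for $\overleftarrow{\Gamma}$ in Lemma \ref{Gamma_as_functor} and Proposition \ref{Gamma_as_2-functor}: (i) identities go to identities (immediate from the universal property of the pullback, since pulling back the identity on $E$ gives the identity on each fiber); (ii) vertical composition of $2$-morphisms is preserved (the composite natural transformation factors uniquely through the pullback, just as in the diagram in the proof of Lemma \ref{Gamma_as_functor}); and (iii) horizontal composition of $1$-morphisms is preserved, which amounts to noting that if $(F',\varphi'):(E',\mu')\to (E'',\mu'')$ is another right morphism with $\varphi'(e')=1\otimes\varphi'_2(e')$, then the composite acts on objects by $(v,e)\mapsto (\varphi'_2(F(e))\circ\varphi_2(e)\circ v, F'(F(e)))$, which matches $\overrightarrow{\Gamma}(F',\varphi')\circ \overrightarrow{\Gamma}(F,\varphi)$ by definition.

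The only genuinely delicate point is bookkeeping the direction of $\varphi_2(e)$ for right morphisms (it lives in $\Mor_I(p(e),p'(F(e)))$, opposite to the left-morphism case) and confirming that, with this direction, all the pullback squares defining $\overrightarrow{\Gamma}(F,\varphi)(i)$ on morphism objects actually commute; this is precisely where the naturality of $\varphi$ and the fact that $\mu,\mu'$ are comodule structures (not merely maps of quivers) are used, and it mirrors the corresponding hexagon in the definition of $\overleftarrow{\Gamma}(F,\varphi)(i)$. Once this is in place, the rest of the verification is formal and strictly dual to the arguments already given, so no new ideas are required.
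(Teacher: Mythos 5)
Your proposal is correct and is exactly the argument the paper intends: the paper states this proposition with only the word ``Dually'' and omits the details, and your dualization (with $\varphi_2(e)\in\Mor_I(p(e),p'(F(e)))$ for right morphisms, the object assignment $(v,e)\mapsto(\varphi_2(e)\circ v,F(e))$, and the identity $2$-morphisms justified by associativity in $I$) is the correct mirror of the constructions and checks given for $\overleftarrow{\Gamma}$ in Lemma \ref{Gamma_as_functor} and Proposition \ref{Gamma_as_2-functor}. No new ideas are needed beyond the bookkeeping you describe.
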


\subsection{Fibered and Cofibered Categories}
\label{fibered_category}

We have seen in \S\ref{enriched_fiber} that there are three ways to take
a fiber over an object $i\in I_0$ for an $I$-graded category
\[
 \mu : E \longrightarrow E\otimes (I\otimes 1).
\]
We have also seen that two of them, $\mu\downarrow i$ and
$i\downarrow \mu$ can be extended to $2$-functors
\begin{eqnarray*}
 \overleftarrow{\Gamma} & : & \lcats{\bm{V}}_I \longrightarrow
 \lOplax(I,\categories{\bm{V}}), \\ 
 \overrightarrow{\Gamma} & : & \rcats{\bm{V}}_I \longrightarrow
 \rLax(I^{\op},\categories{\bm{V}}). 
\end{eqnarray*}

In the case of non-enriched categories, we need the notions of fibered
and cofibered categories introduced by 
Grothendieck in order to extend the
remaining construction $E|_i$ to a $2$-functor. A definition of fibered
$I$-graded category is introduced for $k$-linear categories
by Lowen \cite{Lowen08} recently. We reformulate Lowen's definition in
order to incorporate it into our definition of graded categories.

The idea of Grothendieck is to compare $\Gamma_{\cof}(\mu)(i)$ and
$E|_i$ for a given $I$-graded category 
\[
 \mu : E \longrightarrow E\otimes (I\otimes 1).
\]

\begin{definition}
 Let $\mu : E \to E\otimes (I\otimes 1)$ be an $I$-graded
 category. Define 
 \[
  i_i : E|_i \longrightarrow \mu\downarrow i
 \]
 as follows. For $e\in (E|_i)_0$, 
 \[
  i_i(e) = (e,1_i).
 \]
 For $e,e'\in (E|_i)_0$, 
 \[
  i_i : (E|_i)(e,e') \longrightarrow
 (\mu\downarrow i)((e,1_i),(e',1_i)) 
 \]
 is defined by the following diagram
 \[
  \begin{diagram}
   \node{(E|_i)(e,e')} \arrow{ese} \arrow{sse} \arrow{se,..} \node{}
   \node{} \\ 
   \node{} \node{(\mu\downarrow i)((e,1_i),(e',1_i))} \arrow{e}
   \arrow{s} 
   \node{E(e,e')} \arrow{s,r}{\mu} \\
   \node{} \node{E(e,e')\otimes 1} \arrow{e,t}{1\otimes 1_i}
   \node{E(e,e')\otimes I(i,i)\otimes 1.}
  \end{diagram}
 \]

 Similarly we define
 \[
  j_i : E|_i \longrightarrow i\downarrow \mu
 \]
 by
 \[
  j_i(e) = (e,1_i)
 \]
 on objects.
\end{definition}

\begin{lemma}
 For each $i\in I_0$,
 \begin{eqnarray*}
  i_i & : & E|_i \longrightarrow \mu\downarrow i, \\
  j_i & : & E|_i \longrightarrow i\downarrow \mu
 \end{eqnarray*}
 are $\bm{V}$-functors.
\end{lemma}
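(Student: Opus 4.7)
The plan is to reduce both statements to the universal properties of the pullback diagrams that define the morphism objects. Since $(\mu\downarrow i)((e,u),(e',u'))$ is given by a pullback into $E(e,e')$, and the map $i_i$ on morphisms is itself constructed through that pullback, it is enough to verify functoriality after post-composing with the two projections of the pullback. The projection to $E(e,e')$ sends the putative morphism to a composite built from the hom-objects of $E$, and the projection to $E(e,e')\otimes 1$ sends it to a composite built from identity morphisms in $I\otimes 1$. Because $E$ is itself a $\bm{V}$-category and the composition/identity maps in both $E|_i$ and $\mu\downarrow i$ were defined by diagrams that commute with the pullback projections, both axioms reduce to the corresponding axioms for $E$.

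Concretely, to treat $i_i$ first I would verify identity preservation: the morphism $i_i(1_e) : 1 \to (\mu\downarrow i)((e,1_i),(e,1_i))$ is obtained by factoring $1_e : 1 \to E(e,e)$ through the pullback, and one checks it agrees with the identity of $(e,1_i)$ by showing they become equal after composing with both pullback projections. For the projection to $E(e,e)$ this is the identity $1_e$ on both sides; for the projection to $E(e,e)\otimes 1$ this reduces to the equality $1_i \circ 1_i = 1_i$ together with the counitality of $\mu$ on identity morphisms. Next, for composition preservation, given morphisms $f \in (E|_i)(e',e'')$ and $g \in (E|_i)(e,e')$, both $i_i(f \circ g)$ and $i_i(f)\circ i_i(g)$ are morphisms $1 \to (\mu\downarrow i)((e,1_i),(e'',1_i))$; I would check they agree after applying each of the two projections. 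The first projection gives $f\circ g$ in $E$ on both sides, and the second projection forces the equality $1_i \circ 1_i = 1_i$ together with the definition of the composition in $\mu\downarrow i$, which was designed precisely so that these match.

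The argument for $j_i$ is formally the same, with the roles of the two legs of the pullback swapped (tensoring $1_i$ with $u'$ on the left versus with $u$ on the right), so no new ideas are needed. The only slightly tricky bookkeeping is to track the various instances of the unit isomorphism $1 \otimes 1 \cong 1$ and the fact that the grading $\mu$ restricted to $E|_i$ factors as $\mu(e) = (e,i)$ with structure map sending $f \in (E|_i)(e,e')$ to $f \otimes 1_i$. I expect the main obstacle to be purely notational: writing out the large commutative diagram that expresses $i_i(f\circ g) = i_i(f)\circ i_i(g)$ so that each square is identified either as a pullback square, as the composition law in $E$, or as the diagram $1_i \circ 1_i = 1_i$ in $I$. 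Once that diagram is drawn, the universal property of the pullback does the rest and yields both lemmas simultaneously.
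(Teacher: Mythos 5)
Your proposal is correct, and it is exactly the verification the paper has in mind (the paper in fact states this lemma without proof, since $i_i$ and $j_i$ are defined through the universal property of the pullbacks defining $(\mu\downarrow i)((e,1_i),(e',1_i))$ and $(i\downarrow\mu)((1_i,e),(1_i,e'))$). Checking identity and composition preservation after postcomposing with the two pullback projections, and reducing the two legs to the category axioms of $E$ and to $1_i\circ 1_i=1_i$ in $I$ together with counitality of $\mu$, is the intended argument.
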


We define prefibered and precofibered categories in terms of these
functors. 

\begin{definition}
 When $i_i$ has a left adjoint
 \[
 s_i :   \mu\downarrow i \longrightarrow E|_i
 \]
 for each $i\in I_0$, $\mu$ is called
 precofibered. The collection $\{s_i\}_{i\in I_0}$ is called a
 precofibered structure on $\mu$.

 When $j_i$ has a right adjoint 
 \[
  t_i : i\downarrow \mu \longrightarrow E|_i
 \]
 for each $i\in I_0$, $\mu$ is called
 prefibered. The collection $\{t_i\}_{i\in I_0}$ is called a
 prefibered structure on $\mu$.
\end{definition}

We obtain a lax functor from a prefibered category.

\begin{definition}
 Let 
 \[
  \mu : E \longrightarrow E\otimes (I\otimes 1)
 \]
 be a prefibered category. For $i\in I_0$, define
 \[
  \Gamma_{\fib}(\mu)(i) = E|_{i}.
 \]
 For a morphism $u : i \to i'$ in $I$, define
 \[
  \Gamma_{\fib}(\mu)(u) : E|_{i'} \longrightarrow E|_{i}
 \]
 by the composition
 \[
  E|_{i'} \rarrow{j_{i'}} i'\downarrow \mu
 \rarrow{\overrightarrow{\Gamma}(\mu)(u)} i\downarrow \mu 
 \rarrow{t_{i}} E|_{i}.
 \]

 Dually, for a precofibered category
 \[
  \mu : E\longrightarrow E\otimes (I\otimes 1),
 \]
 define
 \[
  \Gamma_{\cof}(\mu)(i) = E|_i
 \]
 for each $i\in I_0$ and define
 \[
  \Gamma_{\cof}(\mu)(u) : E|_i \longrightarrow E|_{i'}
 \]
 for each $u : i \to i'$ by the composition
 \[
  E|_{i} \rarrow{i_{i}} \mu\downarrow i
 \rarrow{\overleftarrow{\Gamma}(\mu)(u)} \mu\downarrow i'
 \rarrow{s_{i'}} E|_{i'}.
 \]
\end{definition}

\begin{lemma}
 When $\mu$ is prefibered
 \[
 \Gamma_{\fib}(\mu) : I^{\op} \longrightarrow \categories{\bm{V}}
 \]
 is a lax functor. When $\mu$ is precofibered
 \[
  \Gamma_{\cof}(\mu) : I \longrightarrow \categories{\bm{V}}
 \]
 is an oplax functor.
\end{lemma}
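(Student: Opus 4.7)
The plan is to build the structural 2-morphisms of the oplax/lax data from the adjunction units and counits, and then reduce the coherence axioms to the triangle identities together with the strict 2-functoriality of $\overleftarrow{\Gamma}(\mu)$ and $\overrightarrow{\Gamma}(\mu)$ established in the preceding lemma. I carry out the precofibered (oplax) case in detail; the prefibered (lax) case is the pointwise formal dual.

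Let $\eta^{(i)} : 1_{\mu\downarrow i} \Rightarrow i_i\circ s_i$ and $\epsilon^{(i)} : s_i\circ i_i \Rightarrow 1_{E|_i}$ denote the unit and counit of $s_i \dashv i_i$. Since $\overleftarrow{\Gamma}(\mu)(1_i) = 1_{\mu\downarrow i}$, one has $\Gamma_{\cof}(\mu)(1_i) = s_i\circ i_i$, so I set the unit of the oplax structure to be $\eta_i := \epsilon^{(i)} : \Gamma_{\cof}(\mu)(1_i) \Rightarrow 1_{E|_i}$. For a composable pair $i\rarrow{u} j\rarrow{u'} k$ in $I$, strict functoriality of $\overleftarrow{\Gamma}(\mu)$ yields
\[
\Gamma_{\cof}(\mu)(u'\circ u) = s_k\circ\overleftarrow{\Gamma}(\mu)(u')\circ\overleftarrow{\Gamma}(\mu)(u)\circ i_i,
\]
while expanding the composition gives
\[
\Gamma_{\cof}(\mu)(u')\circ\Gamma_{\cof}(\mu)(u) = s_k\circ\overleftarrow{\Gamma}(\mu)(u')\circ i_j\circ s_j\circ\overleftarrow{\Gamma}(\mu)(u)\circ i_i.
\]
I define $\theta_{u',u}$ by whiskering $\eta^{(j)}$ with $s_k\circ\overleftarrow{\Gamma}(\mu)(u')$ on the left and $\overleftarrow{\Gamma}(\mu)(u)\circ i_i$ on the right.

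It remains to verify the three oplax coherence axioms. The two unit axioms reduce, after the substitutions above, to the triangle identities for $s_i\dashv i_i$ whiskered by a single $\overleftarrow{\Gamma}(\mu)(u)$. The associativity axiom on a triple $a\rarrow{u} b\rarrow{v} c\rarrow{w} d$ compares the two pastings obtained by first inserting $\eta^{(c)}$ and then $\eta^{(b)}$, versus first $\eta^{(b)}$ and then $\eta^{(c)}$, into the horizontal string $s_d\circ\overleftarrow{\Gamma}(\mu)(w)\circ\overleftarrow{\Gamma}(\mu)(v)\circ\overleftarrow{\Gamma}(\mu)(u)\circ i_a$. Since the two insertions occur at disjoint horizontal positions, the middle-four interchange law in $\categories{\bm{V}}$ identifies them. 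This is the delicate step; everything else is routine bookkeeping, and the principal obstacle is keeping the bookkeeping honest when writing the whiskered 2-cells as pasted diagrams in $\categories{\bm{V}}$.

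For $\Gamma_{\fib}(\mu)$ one repeats the same argument with $j_i \dashv t_i$ in place of $s_i \dashv i_i$: the counit $j_i\circ t_i \Rightarrow 1_{i\downarrow\mu}$ contracts the internal $j_{i'}\circ t_{i'}$ to produce a lax-direction 2-morphism $\theta_{u',u} : \Gamma_{\fib}(\mu)(u')\circ\Gamma_{\fib}(\mu)(u) \Rightarrow \Gamma_{\fib}(\mu)(u'\circ u)$, and the unit $1_{E|_i}\Rightarrow t_i\circ j_i$ supplies $\eta_i : 1_{E|_i}\Rightarrow \Gamma_{\fib}(\mu)(1_i)$. Strict functoriality of $\overrightarrow{\Gamma}(\mu)$ viewed on $I^{\op}$, the triangle identities, and the same interchange argument yield the lax coherence axioms, completing the proof.
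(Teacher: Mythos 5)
Your proposal is correct and takes essentially the same route as the paper: the structural $2$-cells of the (op)lax structure are obtained by inserting the unit of $s_i\dashv i_i$ (resp.\ the counit of $j_i\dashv t_i$) into the middle of the composite, relying on the strict functoriality of $\overleftarrow{\Gamma}(\mu)$ and $\overrightarrow{\Gamma}(\mu)$ from the preceding lemma. The paper states the conclusion without checking coherence, whereas you supply that verification explicitly via the triangle identities and the interchange law, which is a welcome (and correct) elaboration rather than a different method.
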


\begin{proof}
 For a composable morphisms
 $i \rarrow{u} i' \rarrow{u'} i''$, we have the following diagram
 \[
 \xymatrix{
 E|_i & E|_{i'} \ar[l]_{\Gamma_{\fib}(\mu)(u)}
 \ar@<-.4ex>[d]_{j_{i'}} & E|_{i''}
 \ar[l]_{\Gamma_{\fib}(\mu)(u')} 
 \ar[d]^{j_{i''}} \\
 i\downarrow\mu \ar[u]^{t_i} & i'\downarrow\mu
 \ar[l]_{\overrightarrow{\Gamma}(\mu)(u)} 
 \ar@<-.4ex>[u]_{t_{i'}} &
 i''\downarrow\mu. \ar[l]_{\overrightarrow{\Gamma}(\mu)(u')}
 }
 \]
 By definition, we have a natural transformation
 \[
 \varepsilon_{i'} : j_{i'}\circ t_{i'} \Longrightarrow
 1_{i'\downarrow\mu} 
 \]
 for each $i\in I_0$. Define
 \[
  \theta_{u',u} : \Gamma_{\fib}(\mu)(u)\circ \Gamma_{\fib}(\mu)(u')
 \Longrightarrow \Gamma_{\fib}(\mu)(u'\circ u)
 \]
 by using $\varepsilon_{i'}$
 \begin{multline*}
 \Gamma_{\fib}(\mu)(u)\circ \Gamma_{\fib}(\mu)(u') =
 (t_i\circ\overrightarrow{\Gamma}(\mu)(u)\circ j_{i'})\circ (t_{i'}\circ
 \overrightarrow{\Gamma}(\mu)(u)\circ j_{i''}) \\
  \Longrightarrow t_i\circ\overrightarrow{\Gamma}(\mu)(u)\circ 
 \overrightarrow{\Gamma}(\mu)(u')\circ j_{i''} =
  t_i\circ\overrightarrow{\Gamma}(\mu)(u'\circ u)\circ j_{i''} =
  \Gamma_{\fib}(\mu)(u'\circ u).
 \end{multline*}
 Then we obtain a lax functor.

 Similarly, if $\mu$ is precofibered, define
 \[
  \theta_{u',u} : \Gamma_{\cof}(\mu)(u'\circ u) \Longrightarrow
 \Gamma_{\cof}(\mu)(u')\circ\Gamma_{\cof}(\mu)(u) 
 \]
 by using 
 \[
  \eta_{i'} : 1_{E|_{i'}} \longrightarrow i_{i'}\circ s_{i'}.
 \]
 And we obtain an oplax functor.
\end{proof}

\begin{definition}
 \label{fibered_category_definition}
 We say a prefibered $I$-graded category
 $\mu : E \to E\otimes (I\otimes 1)$ is fibered if the above natural
 transformations $\varepsilon_{i'}$, hence $\theta_{u',u}$, are
 isomorphisms. A precofibered 
 category is called cofibered if $\eta_{i'}$ are all natural
 isomorphisms. 
\end{definition}

We would like to define morphisms of prefibered and precofibered
categories corresponding to morphisms of lax and oplax functors under
$\Gamma_{\fib}$ and $\Gamma_{\cof}$.

\begin{definition}
Let
 \begin{eqnarray*}
  \mu & : & E \longrightarrow E\otimes(I\otimes 1) \\
  \mu' & : & E' \longrightarrow E'\otimes(I\otimes 1) \\
 \end{eqnarray*}
 be prefibered graded categories over $I$. A morphism of prefibered
 graded categories from $\mu$ to $\mu'$ is a right morphism
 \[
  (F,\varphi) : (E,\mu) \longrightarrow (E',\mu')
 \]
 of $I$-graded categories. $2$-morphisms are also $2$-morphisms in
 $\rcats{\bm{V}}_I$. 
\end{definition}

\begin{definition}
 Let
\[
  (F,\varphi) : (E,\mu) \longrightarrow (E',\mu')
 \]
 be a morphism of prefibered $I$-graded categories.
 Define a right morphism of lax functors
 \[
  (\Gamma_{\fib}(F,\varphi),\Gamma_{\fib}(\varphi)) : \Gamma_{\fib}(\mu)
 \longrightarrow \Gamma_{\fib}(\mu')
 \]
 as follows. For an object $i\in I_0$, we need to define a functor
 \[
  \Gamma_{\fib}(F,\varphi)(i) : E|_{i} \longrightarrow E'|_{i}.
 \]
 For $e \in (E|_i)_0$, we have
 \[
  \varphi(e) : i = p(e) \longrightarrow p'(F(e)).
 \]
 In other words, $(F(e),\varphi(e))\in (i\downarrow\mu')_0$. By applying
 \[
  t'_i : i\downarrow\mu' \longrightarrow E'|_{i}
 \]
 we obtain
 \[
  \Gamma_{\fib}(F,\varphi)(i)(e) = t'_i(F(e),\varphi(e)) \in
 (E'|_i)_0.
 \]
 For a pair of objects 
 $e,e' \in (E|_i)_0$, define
 \[
 (E|_i)(e,e') \longrightarrow
 (E'|_i)(\Gamma_{\fib}(F,\varphi)(i)(e),\Gamma_{\fib}(F,\varphi)(i)(e')) 
 \]
 by the composition
 \[
  (E|_i)(e,e') \rarrow{\tilde{F}} (i\downarrow
 E')((F(e),\varphi(e)),(F(e'),\varphi(e'))) \rarrow{t'_i}
 (E'|_i)(t'_i(F(e),\varphi(e)), t'_i(F(e'),\varphi(e'))),
 \]
 where $\tilde{F}$ is defined by the following diagram
 \[
 \begin{diagram}
 \node{E(e,e)} \arrow{se,..} \arrow{ese,t}{F} \arrow{sse,b}{F} \node{} \\
 \node{} \node{(i\downarrow
 E')((F(e),\varphi(e)),(F(e'),\varphi(e')))} \arrow{e} \arrow{s} 
 \node{E'(F(e),F(e'))} \arrow{s} \\
 \node{} \node{E'(F(e),F(e'))} \arrow{e} \node{E'(F(e),F(e'))\otimes
  (I\otimes 1)(i,p'(F(e'))).}
 \end{diagram}
 \]
 The commutativity of the outside square comes from the naturality of
 $\varphi$. 

 We define $\Gamma_{\fib}(\varphi)$ for
 \[
 \xymatrix{%
 E|_{i'} \ar[rr]^{\Gamma_{\fib}(F,\varphi)(i')}
 \ar[d]_{\Gamma_{\fib}(\mu)(u)} & & E'|_{i'} 
 \ar[d]^{\Gamma_{\fib}(\mu)(u)} \\ 
 E|_{i} \ar[rr]_{\Gamma_{\fib}(F,\varphi)(i)}
 \ar@{=>}[urr]^{\Gamma_{\fib}(\varphi)} & & E'|_{i} 
 }
 \]
 as follows. The adjunction $\varepsilon_i$ induces
 \[
  F(t_i(e,u)) \longrightarrow F(e),
 \]
 which in turn induces
 \[
  j_i(F(t_i(e,u))) \longrightarrow (F(e),\varphi(e))
 \]
 or
 \[
  F(t_i(e,u)) \longrightarrow t_i(F(e),\varphi(e))
 \]
 and we obtain a morphism
 \[
  (F(t_i(e,u)),\varphi(t_i(e,u))) \longrightarrow
 (t'_i(F(e),\varphi(e)),u) 
 \]
 in $i\downarrow\mu'$. By applying $t'_{i}$, we obtain
 $\Gamma_{\fib}(\varphi)$. 
\end{definition}

For $2$-morphisms we define as follows.

\begin{definition}
 Let
 \[
  (F,\varphi), (G,\psi) : (E,\mu) \longrightarrow (E',\mu')
 \]
 be right morphisms of prefibered $I$-graded categories. For a
 $2$-morphism 
 \[
  \xi : (F,\varphi) \Longrightarrow (G,\psi),
 \]
 define a morphism of right transformations
 \[
 \Gamma_{\fib}(\xi) : (\Gamma_{\fib}(F,\varphi),\Gamma_{\fib}(\varphi))
 \Longrightarrow 
 (\Gamma_{\fib}(G,\psi), \Gamma_{\fib}(\psi))
 \]
 as follows. For $i\in X_0$, we need to define a $\bm{V}$-natural
 transformation
 \[
  \Gamma_{\fib}(\xi)(i) : \Gamma_{\fib}(F,\varphi)(i) \Longrightarrow
 \Gamma_{\fib}(G,\psi)(i). 
 \]
 For each object $e$ in $E|_i$, we have
 \[
  \xi(e) : F(e) \longrightarrow G(e).
 \]
 We also have a commutative diagram
 \[
  \begin{diagram}
   \node{x} \arrow{s,=} \arrow{e,=} \node{p(e)} \arrow{e,t}{\varphi(e)}
   \node{p'(F(e))} \arrow{s,r}{p'(\xi(e))} \\
   \node{x} \arrow{e,=} \node{p(e)} \arrow{e,b}{\psi(e)} \node{p'(G(e))}
  \end{diagram}
 \]
 and we have a morphism
 \[
  \xi(e) : (F(e),\varphi(e)) \longrightarrow (G(e),\psi(e))
 \]
 in $i\downarrow \mu'$ and we obtain
 \[
  \Gamma_{\fib}(\xi)(i)(e) \in
 E'(t'_i(F(e),\varphi(e)),t'_i(G(e),\psi(e))) =
 (E'|_{i})(\Gamma_{\fib}(F,\varphi)(i)(e),\Gamma_{\fib}(G,\psi)(i)(e)). 
 \]
 It is not difficult to check that $\Gamma_{\fib}(\xi)$ satisfies the
 condition for a morphism of right transformation.
\end{definition}

The above definitions can be dualized and we obtain a $2$-functor
$\Gamma_{\cof}$. 

\begin{proposition}
 We have $2$-functors
 \begin{eqnarray*}
  \Gamma_{\fib} & : & \category{Prefibered}(I) \longrightarrow
 \rLax(I^{\op},\categories{\bm{V}}),  \\
  \Gamma_{\cof} & : & \category{Precofibered}(I) \longrightarrow
   \lOplax(I,\categories{\bm{V}}), 
 \end{eqnarray*}
 whose behaviors on objects are given by $\Gamma_{\fib}(\mu)$ and
 $\Gamma_{\cof}(\mu)$, respectively.
\end{proposition}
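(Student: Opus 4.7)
My plan is to treat only $\Gamma_{\cof}$ in detail; the statement for $\Gamma_{\fib}$ follows by a dual argument (or by applying the cofibered case to the opposite gradings). The key observation that organizes the whole proof is that the formulas in the definitions of $\Gamma_{\cof}(\mu)(u)$, $\Gamma_{\cof}(F,\varphi)$ and $\Gamma_{\cof}(\xi)$ are obtained from the corresponding data for $\overleftarrow{\Gamma}$ by pre- and post-composition with the fiber inclusions $i_i : E|_i \to \mu\downarrow i$ and their left adjoints $s_i : \mu\downarrow i \to E|_i$. Thus, once I have verified the axioms for $\overleftarrow{\Gamma}$ (Proposition~\ref{Gamma_as_2-functor}), most of the work for $\Gamma_{\cof}$ reduces to manipulating the unit $\eta_i : 1_{E|_i} \Rightarrow i_i\circ s_i$ and the counit $\varepsilon_i : s_i\circ i_i \Rightarrow 1_{\mu\downarrow i}$ of the adjunction $s_i\dashv i_i$, together with the triangle identities.

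First I will check that $\Gamma_{\cof}(\mu)$ is an oplax functor. For composable $i\rarrow{u}i'\rarrow{u'}i''$, the structural $2$-morphism
\[
\theta_{u',u} : \Gamma_{\cof}(\mu)(u'\circ u) \Longrightarrow \Gamma_{\cof}(\mu)(u')\circ \Gamma_{\cof}(\mu)(u)
\]
is defined using $\eta_{i'}$ inserted between the two occurrences of $s_{i'}$ and $i_{i'}$; coherence of this $\theta$ with further compositions and with identities follows from the triangle identities and the strict functoriality of $\overleftarrow{\Gamma}(\mu)$ already established in \S\ref{enriched_fiber}. Next I will check that $\Gamma_{\cof}(F,\varphi)$ is a left transformation of oplax functors. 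The required $\bm{V}$-functor at $i$ is $s'_i\circ\overleftarrow{\Gamma}(F,\varphi)(i)\circ i_i$; the required $2$-cell $\Gamma_{\cof}(\varphi)(u)$ is built, as in the definition of $\Gamma_{\fib}(\varphi)$ but dualized, from the unit $\eta'_{i'}$ of the codomain adjunction and from $\overleftarrow{\Gamma}(\varphi)(u)$. The axioms for a left transformation then reduce to a diagram chase that mixes (i) the axioms for $\overleftarrow{\Gamma}(F,\varphi)$ as a left transformation, proved in Lemma~\ref{Gamma_as_functor} and Proposition~\ref{Gamma_as_2-functor}, and (ii) naturality of $\eta'$ and $\eta$.

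For the $2$-morphism part, given $\xi : (F,\varphi)\Rightarrow(G,\psi)$, I will define $\Gamma_{\cof}(\xi)(i)$ componentwise by the image under $s'_i$ of the morphism $\xi(e) : (F(e),\varphi(e))\to(G(e),\psi(e))$ in $\mu'\downarrow i$ (obtained by verifying commutativity of the relevant square exactly as in the definition of $\Gamma_{\fib}(\xi)$). That this satisfies the morphism-of-left-transformations square is then a consequence of the naturality of $s'_i$ and of the corresponding statement for $\overleftarrow{\Gamma}(\xi)$. Preservation of identities and vertical composition of $2$-morphisms is immediate from the functoriality of each $s'_i$, so that $\Gamma_{\cof}$ induces a functor on Hom-categories
\[
\Gamma_{\cof} : \category{Precofibered}(I)(\mu,\mu') \longrightarrow \lOplax(I,\categories{\bm{V}})(\Gamma_{\cof}(\mu),\Gamma_{\cof}(\mu')),
\]
mirroring Lemma~\ref{Gamma_as_functor}.

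Finally I will verify the two remaining $2$-functoriality diagrams, namely preservation of identity $1$-morphisms and compatibility with horizontal composition $(F',\varphi')\circ(F,\varphi)$. Preservation of identities is immediate. For composition, the formula $\Gamma_{\cof}(F',\varphi')(i)\circ \Gamma_{\cof}(F,\varphi)(i) = s''_i\circ\overleftarrow{\Gamma}(F',\varphi')(i)\circ i'_i\circ s'_i\circ \overleftarrow{\Gamma}(F,\varphi)(i)\circ i_i$ differs from $\Gamma_{\cof}((F',\varphi')\circ(F,\varphi))(i)$ by the insertion of $i'_i\circ s'_i$; the two agree up to the unit $\eta'_i$, and the coherence of the structural $2$-cells of $\Gamma_{\cof}$ with this insertion follows from the same triangle identities used above. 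The main obstacle is not any single step but the bookkeeping: one must systematically check that every coherence diagram becomes, after cancelling a $\overleftarrow{\Gamma}$-piece, a consequence of a triangle identity for $s_i\dashv i_i$ at the appropriate fiber. Once this bookkeeping is set up, strict $2$-functoriality of $\Gamma_{\cof}$, and dually of $\Gamma_{\fib}$, is established.
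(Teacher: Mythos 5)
The paper offers no proof of this proposition --- it is stated as an immediate consequence of the preceding definitions --- so the relevant comparison is with what those definitions implicitly require. Your organizing principle is exactly the intended one: everything about $\Gamma_{\cof}$ (resp.\ $\Gamma_{\fib}$) is obtained from $\overleftarrow{\Gamma}$ (resp.\ $\overrightarrow{\Gamma}$) by conjugating with the fiber inclusions and their adjoints, so the oplax/lax structure constraints come from the unit and counit of $s_i\dashv i_i$ (resp.\ $j_i\dashv t_i$), and the coherence axioms reduce to naturality plus the triangle identities together with the already-established $2$-functoriality of $\overleftarrow{\Gamma}$. Your verification of the oplax-functor axioms for $\Gamma_{\cof}(\mu)$, of the left-transformation axioms for $\Gamma_{\cof}(F,\varphi)$, and of the $2$-morphism level is sound and matches the paper's constructions.

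There is, however, one step that does not close as written: strict preservation of composition of $1$-morphisms. You observe that $\Gamma_{\cof}(F',\varphi')(i)\circ\Gamma_{\cof}(F,\varphi)(i)$ and $\Gamma_{\cof}((F',\varphi')\circ(F,\varphi))(i)$ differ by an inserted $i'_i\circ s'_i$ and hence ``agree up to the unit $\eta'_i$,'' yet you then conclude strict $2$-functoriality; these two claims are in tension. Concretely, in the prefibered case the composite of the images sends $e$ to $t''_i\bigl(F'(t'_i(F(e),\varphi(e))),\,\varphi'(t'_i(F(e),\varphi(e)))\bigr)$, while $\Gamma_{\fib}$ of the composite sends $e$ to $t''_i\bigl(F'(F(e)),\,\varphi'(F(e))\circ\varphi(e)\bigr)$. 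Unless $\varphi(e)$ is an identity, $t'_i(F(e),\varphi(e))$ and $F(e)$ lie in different fibers and are related only by the counit of the adjunction, so the two values are canonically isomorphic but not equal for a general choice of adjoints. The triangle identities control the coherence $2$-cells but cannot turn this non-identity comparison into an equality. To repair the argument you must either exhibit a choice of adjoints for which the comparison is an identity, or weaken the conclusion to a pseudofunctor (which is what the classical cleavage construction gives). The paper does not address this point either, so the defect is arguably in the statement as much as in your write-up, but it should be flagged rather than absorbed into the bookkeeping.
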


\section{The Grothendieck Construction as Left Adjoints}
\label{adjunctions}

In \S\ref{Gr_as_2-functor}, we have seen that the Grothendieck
construction defines a $2$-functor
\[
 \Gr : \lOplax(I,\categories{\bm{V}}) \longrightarrow
 \lcats{\bm{V}}_I \subset \rlComod{(I\otimes 1)}.
\]
We have constructed $2$-functors
\[
 \lGamma : \lcats{\bm{V}}_I \longrightarrow
 \lFunct(I,\categories{\bm{V}}) \subset \lOplax(I,\categories{\bm{V}}) 
\]
and
\[
 \Gamma_{\cof} : \category{Cofibered}(I) \longrightarrow
 \lOplax(I,\categories{\bm{V}}) 
\]
in \S\ref{comma_category}.

The aim of this section is to show that $\Gr$ is left adjoint to
$\overleftarrow{\Gamma}$. We also show that a restriction of $\Gr$ to
a certain $2$-subcategory is left adjoint to $\Gamma_{\cof}$ in a weak
sense. We, of course, have ``lax versions'' of these adjunctions.

By composing the forgetful functor, we obtain
\[
 \Gr : \lOplax(I,\categories{\bm{V}}) \longrightarrow
 \categories{\bm{V}}, 
\]
which is the $2$-functor $\Gr$ defined in
\S\ref{definition_for_oplax_functor}. We show that this $2$-functor also
has a right adjoint in \S\ref{left_adjoint_to_diagonal}.

\subsection{The Grothendieck Construction and Graded Categories}
\label{Gr_and_pi}

In this section, we prove that $\lGamma$ is right adjoint to $\Gr$.

\begin{theorem}
 \label{Gr_and_Gamma_are_adjoint}
 For an oplax functor $X: I \to \categories{\bm{V}}$ and an $I$-graded
 category $\mu : E \to E\otimes (I\otimes 1)$, we
 have an isomorphism of categories 
 \[
 (\lcats{\bm{V}}_I)((\Gr(X),\mu_X),(E,\mu))\cong
 \lOplax(I,\categories{\bm{V}})(X,\lGamma(\mu)). 
 \]
\end{theorem}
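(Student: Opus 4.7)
The plan is to construct mutually inverse functors
\[
\Phi : (\lcats{\bm{V}}_I)((\Gr(X),\mu_X),(E,\mu)) \longleftrightarrow \lOplax(I,\categories{\bm{V}})(X,\lGamma(\mu)) : \Psi
\]
and verify they are compatible with composition and act correctly on $2$-morphisms. Intuitively, the Grothendieck construction is a ``free'' assembly process, and $\lGamma$ undoes this assembly by taking comma-type fibers; the coproduct decomposition of $\Gr(X)((x,i),(y,j))$ into $u$-components is precisely matched by the pullback decomposition of $(\mu\downarrow j)(\cdots,\cdots)$ at various morphisms $u$.

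For $\Phi$, given a degree-preserving left morphism $(F,\varphi)$, write $\varphi(x,i) = 1\otimes\varphi_2(x,i)$ with $\varphi_2(x,i)\in \Mor_I(p(F(x,i)),i)$ for each object $(x,i)$ of $\Gr(X)$. Define $\tilde{F}(i):X(i)\to \mu\downarrow i$ on objects by $\tilde{F}(i)(x) = (F(x,i),\varphi_2(x,i))$. On morphisms, I first embed $X(i)(x,y)$ into the $1_i$-component of $\Gr(X)((x,i),(y,i))$ via pre-composition with the oplax structure $2$-cell $\eta_i(x):X(1_i)(x)\to x$, then apply $F$ to reach $E(F(x,i),F(y,i))$, and finally lift to $(\mu\downarrow i)(\tilde{F}(i)(x),\tilde{F}(i)(y))$ using the universal property of the defining pullback of $\mu\downarrow i$ — the lift exists because the naturality of $\varphi$ combined with its degree-preserving form gives exactly the required equation in the pullback diagram. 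The component $\tilde{\varphi}(u)(x)$ of the natural transformation $\tilde{\varphi}(u):\lGamma(\mu)(u)\circ\tilde{F}(i)\Rightarrow \tilde{F}(j)\circ X(u)$ is obtained by applying $F$ to the canonical element $1_{X(u)(x)}$ sitting in the $u$-component $X(j)(X(u)(x),X(u)(x))$ of $\Gr(X)((x,i),(X(u)(x),j))$, and lifting similarly.

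For $\Psi$, given a left transformation $(G,\psi):X\to \lGamma(\mu)$, write $G(i)(x) = (g(i,x),\gamma(i,x))$ with $\gamma(i,x):p(g(i,x))\to i$, and set $F(x,i) = g(i,x)$ and $\varphi_2(x,i) = \gamma(i,x)$. On the $u$-component $X(j)(X(u)(x),y)$ of $\Gr(X)((x,i),(y,j))$, define $F$ as the composite of $G(j)$ on morphisms, pre-composition by the $x$-component $\psi(u)(x):\lGamma(\mu)(u)(G(i)(x))\to G(j)(X(u)(x))$, and the forgetful map $(\mu\downarrow j)(\lGamma(\mu)(u)(G(i)(x)),G(j)(y))\to E(F(x,i),F(y,j))$. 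The required degree-preserving natural transformation $\varphi$ is read off from the second coordinates $\gamma(i,x)$.

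The main obstacle will be verifying functoriality in both directions: the composition law in $\Gr(X)$ is twisted by the oplax $2$-cells $\theta_{v,u}$ of $X$, and one must check that these precisely match the coherence axioms for left transformations of oplax functors. This is essentially diagram-chasing mediated by the pullback defining $\mu\downarrow i$ and the comodule axioms on $\mu$. Once this is done, $\Psi\circ\Phi = 1$ and $\Phi\circ\Psi = 1$ follow by inspection, since both constructions extract and repackage the same underlying data. At the $2$-morphism level, a $2$-morphism $\xi:(F,\varphi)\Rightarrow (G,\psi)$ in $\lcats{\bm{V}}_I$ is a family $\xi(x,i)\in E(F(x,i),G(x,i))$ whose comodule-compatibility condition is exactly the condition that each $\xi(x,i)$ lifts uniquely to a morphism in $(\mu\downarrow i)(\tilde{F}(i)(x),\tilde{G}(i)(x))$; these lifts assemble into the corresponding morphism of left transformations, and naturality of the bijection in $X$ and in $(E,\mu)$ is immediate from the formulas.
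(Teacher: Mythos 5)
Your proposal is correct in outline and is essentially the paper's argument in a different packaging: the paper constructs a unit $\eta_X : X \to \lGamma(\mu_X)$ (the canonical inclusion of $X(i)$ into the $1_i$-part of $\mu_X\downarrow i$) and a counit $\varepsilon_\mu : \Gr(\lGamma(\mu)) \to (E,\mu)$ (projection out of the defining pullbacks) and verifies the triangle identities, and your $\Phi$ and $\Psi$ are exactly the maps $(F,\varphi)\mapsto \lGamma(F,\varphi)\circ\eta_X$ and $(G,\psi)\mapsto\varepsilon_\mu\circ\Gr(G,\psi)$ induced by that unit and counit. The details you flag as remaining (functoriality through the $\theta_{v,u}$-twisted composition, the lift criterion in the pullback defining $\mu\downarrow i$, and the identification of $2$-morphisms) are the same checks the paper leaves as routine.
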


We need to define morphisms
\begin{eqnarray*}
 \eta_X & : & X \longrightarrow \lGamma(\mu_X), \\
 \varepsilon_{\mu} & : & \Gr(\lGamma(\mu)) \longrightarrow \mu,
\end{eqnarray*}
in $\lOplax(I,\categories{\bm{V}})$ and
$\lcats{\bm{V}}_I$, respectively.

Let us first define $\eta_X$. We need to define a family of functors
\[
 \eta_X(i) : X(i) \longrightarrow \lGamma(\mu_X)(i)
\]
indexed by $i\in I_0$ and a family of natural transformations
\[
\eta_X(u)  : \lGamma(\mu_X)(u)\circ \eta_X(i) \Longrightarrow
\eta_X(j)\circ X(u) 
\]
indexed by morphisms $u : i\to j$ in $I$.

Objects of $\lGamma(\mu_X)(i)$ are given by
\[
 \lGamma(\mu_X)(i)_0 = \coprod_{(x,j)\in \Gr(X)_0} \{(x,j)\}\times
 \Mor_{I}(j,i)
\]
and we have a canonical inclusion
\[
 \eta_X(i) : X(i)_0 \hookrightarrow X(i)_0\times\{i\}\times \{1_i\}
 \subset \lGamma(\mu_X)(i)_0. 
\]
For $x,x'\in X(i)_0$, 
\[
 \lGamma(\mu_X)(i)(\eta_X(i)(x),\eta_X(i)(x')) = 
 \lGamma(\mu_X)((x,i,1_i),(x',i,1_i))
\]
is defined by the following pullback diagram
\[
 \begin{diagram}
  \node{\lGamma(\mu_X)((x,i,1_i),(x',i,1_i))} \arrow{e} \arrow{s}
  \node{\Gr(X)((x,i),(x',i))} 
  \arrow{e,=} \node{\bigoplus_{u: 
  i\to i} X(i)(X(u)(x),x')\otimes(\{u\}\otimes 1)} \arrow{s,r}{\mu} \\ 
  \node{\Gr(X)((x,i),(x',i))\otimes 1} \arrow{s,=}
  \node{} \node{\bigoplus_{u:i\to i} X(i)(X(u)(x),x')\otimes(\{u\}\otimes
  1)\otimes(\{u\}\otimes 1)} \arrow{s} \\
  \node{\bigoplus_{u: i\to i} X(i)(X(u)(x),x')\otimes(\{u\}\otimes
  1)\otimes 1}
  \arrow[2]{e,t}{1\otimes 1_i} 
  \node{} 
  \node{\Gr(X)((x,i),(x',i))\otimes (I\otimes 1)(i,i)} 
 \end{diagram}
\]
and we have 
\[
 \lGamma(\mu_X)((x,i,1_i),(x',i,1_i)) \cong X(i)(x,x')\times\{1_i\}.
\]
This identification defines a functor
\[
 \eta_X(i) : X(i) \longrightarrow \lGamma(\mu_X)(i)
\]
by the canonical inclusion.

For a morphism $u : i \to j$ and an object $x\in X(i)$, we have
\begin{eqnarray*}
 \lGamma(\mu_X)(u)\circ\eta_X(i)(x) & = & (x,i,u) \\
 \eta_X(j)\circ X(u)(x) & = & (X(u)(x),j,1_j).
\end{eqnarray*}
Define a morphism in $\bm{V}$
\[
 \eta_X(u)(x) : 1 \longrightarrow
 \lGamma(\mu_X)(j)(\lGamma(\mu_X)(u)\circ\eta_X(i)(x),
 \eta_X(j)\circ X(u)(x)) = \lGamma(\mu_X)(j)((x,i,u),(X(u)(x),j,1_j))
\]
by the following diagram
\begin{center}
 \scalebox{.75}{%
 $\begin{diagram}
  \node{1} \arrow[3]{e,t}{1_{X(u)(x)}} \arrow{sse,b}{1_{X(u)(x)}}
   \arrow{se,t,..}{\eta_{X}(u)(x)} \node{} \node{} 
   \node{X(j)(X(u)(x),X(u)(x))\otimes(\{u\}\otimes 1)} 
   \arrow{s} \\ 
  \node{} \node{\lGamma(\mu_X)(j)((x,i,u),(X(u)(x),j,1_j))} \arrow{e}
   \arrow{s} 
  \node{\Gr(X)((x,i),(X(u)(x),j))} \arrow{e,=} \node{\bigoplus_{v : i
  \to j} X(j)(X(u)(x),X(u)(x))\otimes (\{v\}\otimes 1)} \arrow{s,r}{\mu}
   \\  
  \node{} \node{\Gr(X)((x,i),(X(u)(x),j))\otimes 1}
   \arrow[2]{e,t}{1\otimes u}
   \node{} 
   \node{\Gr(X)((x,i),(X(u)(x),j))\otimes (I\otimes 1)(i,j)}
 \end{diagram}$
 }
\end{center}

\begin{lemma}
 The above construction defines a morphism of oplax functors
 \[
  \eta_X : X \longrightarrow \lGamma(\mu_X).
 \]
\end{lemma}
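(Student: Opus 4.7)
The plan is to verify directly that the two pieces of data already constructed — the $\bm{V}$-functors $\eta_X(i)$ and the $\bm{V}$-natural transformations $\eta_X(u)$ — satisfy the three axioms required of a left transformation of oplax functors (naturality of each $\eta_X(u)$, the unit compatibility, and the composition compatibility). All three reductions rely on the same structural observation: because the comodule structure $\mu_X$ is the tag $u\mapsto u$ on each coproduct summand $X(j)(X(u)(x),y)$ of $\Gr(X)((x,i),(y,j))$, pulling back along a prescribed morphism $v : i\to j$ picks out exactly the summand indexed by $v$. Consequently, the pullback defining $\lGamma(\mu_X)(j)((x,i,v),(y,j,1_j))$ is canonically isomorphic to $X(j)(X(v)(x),y)$, and analogous identifications hold for all hom-objects in $\lGamma(\mu_X)(i)$ appearing below.

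First I would verify that each $\eta_X(i)$ is a $\bm{V}$-functor. Using the identification $\lGamma(\mu_X)(i)((x,i,1_i),(x',i,1_i))\cong X(i)(x,x')$ (obtained because the only morphism $u:i\to i$ whose image under $1\otimes u$ matches $1\otimes 1_i$ is $u=1_i$, picking out the $X(1_i)(X(1_i)(x),x')$ summand, which simplifies via the unit $\eta_i:X(1_i)\Rightarrow 1_{X(i)}$ of the oplax functor $X$), the inclusion $\eta_X(i)$ is compatible with composition precisely because the composition in $\Gr(X)$ restricted to the $u=1_i$ component is built from the composition in $X(i)$ together with $\theta_{1_i,1_i}$, which cancels against the unit axioms of $X$. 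Identities are preserved trivially.

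Next I would show each $\eta_X(u)$ is $\bm{V}$-natural. Given a morphism $f:x\to x'$ in $\underline{X(i)}$, one must check that the two composites $1\to\lGamma(\mu_X)(j)((x,i,u),(X(u)(x'),j,1_j))$ obtained by first applying $f$ and then $\eta_X(u)(x')$, versus first applying $\eta_X(u)(x)$ and then $X(u)(f)$, agree. Under the identification with the appropriate summand of $\Gr(X)((x,i),(X(u)(x'),j))$, both sides reduce to the morphism $X(u)(f)\otimes u$, so the naturality square commutes. The unit axiom for left transformations reduces to checking that the composite $\lGamma(\mu_X)(1_i)\circ\eta_X(i)\Rightarrow\eta_X(i)\circ X(1_i)$ is compatible with the oplax units of both $X$ and $\lGamma(\mu_X)$, which again follows from tracing the summand indexed by $1_i$ through the pullback definitions.

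The main obstacle is the composition coherence: for $i\xrightarrow{u}j\xrightarrow{v}k$ one must show the hexagon relating $\lGamma(\mu_X)(v\circ u)$, $\lGamma(\mu_X)(v)\circ\lGamma(\mu_X)(u)$, $X(v\circ u)$, and $X(v)\circ X(u)$ commutes after whiskering with $\eta_X$. This is where the oplax structure $\theta^X_{v,u}:X(v\circ u)\Rightarrow X(v)\circ X(u)$ and the corresponding structure $\theta^{\lGamma(\mu_X)}_{v,u}$ interact. The key point is that $\theta^{\lGamma(\mu_X)}_{v,u}$ is, by the construction of $\lGamma(\mu_X)$, induced from composition in $\Gr(X)$, and composition in $\Gr(X)$ was itself defined using $\theta^X_{v,u}$. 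Once both oplax comparison $2$-morphisms are rewritten in terms of the single $\theta^X_{v,u}$, the hexagon collapses to the associativity of composition in $\Gr(X)$ restricted to the identity summand, which holds by construction. The verification is therefore a (lengthy but mechanical) diagram chase in the pullback diagrams defining $\lGamma(\mu_X)$, and I would relegate the explicit diagram to the reader.
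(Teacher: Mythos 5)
Your proposal is correct and takes essentially the same route as the paper, whose own proof merely states that the unit and composition coherence diagrams for a left transformation commute "by the commutativity of diagrams of $2$-morphisms in Definition of oplax functor" and omits all details; you are filling in exactly that verification. One small inaccuracy worth noting: $\lGamma(\mu_X)$ is a \emph{strict} functor, so its comparison $2$-morphisms $\theta^{\lGamma(\mu_X)}_{v,u}$ are identities — the appearance of $\theta^X_{v,u}$ on the right-hand path of the coherence hexagon comes from the composition law of $\lGamma(\mu_X)(k)$ (inherited from $\Gr(X)$, which was defined using $\theta^X_{v,u}$), and the two paths are then matched using the unit axioms of the oplax functor $X$ rather than associativity in $\Gr(X)$; this does not affect the correctness of your argument.
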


\begin{proof}
 We need to check the following diagrams are commutative.
 \[
 \xymatrix{%
 \lGamma(\mu_X)(1_i)\circ \eta_X(i) \ar@{=>}[r] \ar@{=>}[d] &
 \eta_X(i)\circ X(1_i) 
 \ar@{=>}[d] \\ 
 1_{\lGamma(\mu_X)(i)}\circ \eta_X(i) \ar@{=}[r] & \eta_X(i)\circ 1_{X(i)}
 }
 \]
 \[
 \xymatrix{%
 \lGamma(\mu_X)(v\circ u)\circ \eta_X(k) \ar@{=>}[r] \ar@{=>}[d] &  
 \lGamma(\mu_X)(u)\circ \lGamma(\mu_X)(v)\circ \eta_X(k)  \ar@{=>}[r] &
 \lGamma(\mu_X)(u)\circ \eta_X(j)\circ X(v) \ar@{=>}[d] \\
 \eta_X(k)\circ X(v\circ u) 
 \ar@{=>}[rr] & & 
 \eta_X(k)\circ X(v)\circ X(u).
 }
 \]
 The commutativity of these diagrams follows from the commutativity of
 diagrams of $2$-morphisms in Definition
 \ref{oplax_functor_definition}. The details are omitted. 
\end{proof}

Let us define $\varepsilon_{\mu}$ for an $I$-graded category
$(E,\mu)$. We need to define a $\bm{V}$-functor
\[
 \varepsilon'_{\mu} : \Gr(\lGamma(\mu)) \longrightarrow E
\]
and a $\bm{V}$-natural transformation
\[
\xymatrix{
\Gr(\lGamma(\mu)) \ar[r]^{\mu_{\lGamma(\mu)}}
\ar[d]_{\varepsilon'_{\mu}} & \Gr(\lGamma(\mu))\otimes (I\otimes 1) 
\ar[d]^{\varepsilon'_{\mu}\otimes 1} \\
E \ar[r]^{\mu} \ar@{=>}[ur]^{\varepsilon''_{\mu}} & E\otimes (I\otimes 1).
}
\]
Let us denote the map induced by $\mu$ on objects by
\[
 \mu_0 = 1_{E_0}\times p : E_0 \longrightarrow E_0\times I_0.
\]
$\varepsilon'_{\mu}$ is defines as follows. Objects of
$\Gr(\lGamma(\mu))$ are given by
\[
 \Gr(\lGamma(\mu))_0 = \coprod_{i\in I_0} \lGamma(\mu)(i)_0\times\{i\} =
 \coprod_{i\in I_0} \lset{(e,p(e)\rarrow{u} i,i)}{ e\in E_0, u\in
 I_1}.
\]
For objects $(e,u,i), (e',u',i')$ in $\Gr(\lGamma(\mu))$,
\[
 \Gr(\lGamma(\mu))((e,u,i),(e',u',i')) =
 \bigoplus_{v: i\to i'} \lGamma(\mu)(i')(\lGamma(\mu)(v)(e,u),(e'u'))
\]
and each component $\lGamma(\mu)(i')((e,v\circ u),(e',u'))$ is defined
by the following pullback diagram 
\[
 \begin{diagram}
  \node{\lGamma(\mu)(i')((e,v\circ u),(e',u'))} \arrow{e} \arrow[2]{s} 
  \node{E(e,e')} \arrow{s,r}{\mu} \\
  \node{} \node{E(e,e')\otimes (I\otimes 1)(p(e),p(e'))}
  \arrow{s,r}{u'_*} \\ 
  \node{E(e,e')\otimes 1} \arrow{e,t}{1\otimes (v\circ u)}
  \node{E(e,e')\otimes (I\otimes 1)(p(e),i').} 
 \end{diagram}
\]
$\varepsilon'_{\mu}$ is defined by the top morphism in the above
diagram.

For an object $(x,u,i) \in \Gr(\lGamma(\mu))$, define
\[
 \varepsilon''_{\mu}(x,u,i) : 1 \to (I\otimes 1)(p\circ
 \varepsilon'_{\mu} (x,u,i),p_{\lGamma(\mu)}(x,u,i)) = (I\otimes
 1)(p(e), i)
\]
by 
\[
 \varepsilon''_{\mu}(x,u,i)= u.
\]

It is tedious but elementary to check that $\varepsilon''_{\mu}$ is a
$\bm{V}$-natural transformation and we obtain a morphism
$\varepsilon_{\mu}= (\varepsilon_{\mu}',\varepsilon_{\mu}'')$ in
$\categories{\bm{V}}_I$. 

\begin{lemma}
 The pair
 $\varepsilon_{\mu}= (\varepsilon_{\mu}',\varepsilon_{\mu}'')$ is a
 morphism in $\lcats{\bm{V}}_I$.
\end{lemma}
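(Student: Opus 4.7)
The task is to verify three conditions: (a) that $\varepsilon'_\mu$ is a $\bm{V}$-functor, (b) that $\varepsilon''_\mu$ is a $\bm{V}$-natural transformation relaxing the comodule square $\mu\circ \varepsilon'_\mu \Rightarrow (\varepsilon'_\mu\otimes 1)\circ \mu_{\lGamma(\mu)}$ so that $(\varepsilon'_\mu, \varepsilon''_\mu)$ is a left morphism in $\rlComod{(I\otimes 1)}$, and (c) that this $2$-cell is degree-preserving in the sense of Definition \ref{2-category_of_graded_category_definition}. I will dispatch (a) separately from (b) and (c), since (a) carries essentially all of the real content.

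For (a), observe that on each summand $\lGamma(\mu)(i')((e,v\circ u),(e',u'))$ of the decomposition of $\Gr(\lGamma(\mu))((e,u,i),(e',u',i'))$, the map $\varepsilon'_\mu$ is declared to be the canonical arrow to $E(e,e')$ coming from the pullback that defines this summand. Preservation of identities is immediate: $1_{(e,u,i)}$ is obtained by factoring $1_e : 1 \to E(e,e)$ through the pullback at the summand $v=1_i$, so its image under $\varepsilon'_\mu$ is $1_e$. Preservation of composition is the delicate step, and the plan is to exploit the fact, established earlier, that $\lGamma(\mu)$ is a \emph{strict} $\bm{V}$-functor $I \to \categories{\bm{V}}$; the Grothendieck structure $2$-cells $\theta_{v',v}$ that appear in the composition of $\Gr(\lGamma(\mu))$ are therefore identities. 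Under this simplification, the composition on each summand projects, via $\varepsilon'_\mu$, to ordinary composition in $E$, and the required equality unravels to the $\bm{V}$-functoriality of $\circ : E \otimes E \to E$ together with the coassociativity of $\mu$, which is exactly what reconciles the pullback descriptions of the source and the composed summand. The hard part will be exactly this diagram chase, which is direct but requires keeping track of the pullback construction, the coproduct decomposition, and the coassociativity of $\mu$ at once.

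For (b) and (c), evaluation at an object gives $\varepsilon''_\mu(e,u,i) = 1_e \otimes u$ in $E(e,e) \otimes (I\otimes 1)(p(e),i)$, which is manifestly of the form $1_{\varepsilon'_\mu(e,u,i)} \otimes \varepsilon''_{\mu,2}(e,u,i)$ with $\varepsilon''_{\mu,2}(e,u,i) = u \in \Mor_I(p(e),i)$; this settles (c) at once, since $u$ is a genuine morphism of $I$ rather than a mere morphism in the free category $I \otimes 1$. For the naturality in (b), the hexagonal axiom, when evaluated on the summand at $v : i \to i'$ of the morphism object $\Gr(\lGamma(\mu))((e,u,i),(e',u',i'))$, reduces to the identity $u'_* \circ \mu \circ \mathrm{proj} = (1 \otimes (v\circ u)) \circ \mathrm{proj}$, and this is precisely the equalizer/pullback condition defining the summand $\lGamma(\mu)(i')((e, v\circ u), (e',u'))$. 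The remaining unit and associativity coherences for a left morphism in $\rlComod{(I\otimes 1)}$ then reduce to the strict unitality and associativity of composition in $I$, so no further 2-categorical coherence is required.
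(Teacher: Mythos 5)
Your proposal is correct, and in fact the paper offers no proof of this lemma at all: it is stated immediately after the remark that the verification is ``tedious but elementary,'' so you are supplying details the author omits rather than diverging from an argument in the text. You identify exactly the right pressure points. For functoriality of $\varepsilon'_{\mu}$, the essential facts are that $\lGamma(\mu)$ is a \emph{strict} functor (so the structure $2$-cells $\theta_{v',v}$ in the composition of $\Gr(\lGamma(\mu))$ are identities), that $\lGamma(\mu)(v')$ is induced by the identity of $E(e,e')$ via the universal property of the pullback and hence commutes with the projection to $E$, and that the composition in $\mu\downarrow i''$ was itself defined so that the projection to $E(e,e'')$ intertwines it with $\circ$ in $E$; your diagram chase reduces to stacking these three commutative squares. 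For naturality of $\varepsilon''_{\mu}$, your observation that the condition on the summand indexed by $v$ is literally the equalizer/pullback identity $(1\otimes u'_*)\circ\mu\circ\mathrm{proj}=(1\otimes(v\circ u))\circ\mathrm{proj}$ defining $\lGamma(\mu)(i')((e,v\circ u),(e',u'))$ is the whole content of the check (extended to the coproduct of summands by distributivity of $\otimes$). Degree-preservation is, as you say, immediate from $\varepsilon''_{\mu}(e,u,i)=1_e\otimes u$ with $u$ a genuine morphism of $I$. Two cosmetic quibbles: the phrase ``$\bm{V}$-functoriality of $\circ:E\otimes E\to E$'' is not quite meaningful ($\circ$ is a family of morphisms in $\bm{V}$, not a $\bm{V}$-functor) --- what you are actually using is the compatibility of the pullback-defined composition in $\mu\downarrow i$ with $\circ$ in $E$; and the closing sentence about ``remaining unit and associativity coherences'' for a left morphism in $\rlComod{(I\otimes 1)}$ is vacuous, since Definition~\ref{2-category_of_comodules} asks only for a $\bm{V}$-functor and a $\bm{V}$-natural transformation filling the square. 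Neither affects correctness.
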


\begin{proof}[Proof of Theorem \ref{Gr_and_Gamma_are_adjoint}]
 It remains to check the following diagrams are commutative
 \[
  \xymatrix{%
 \Gr \ar@{=>}_{\Gr\circ\eta}[d] \ar@{=}[dr] & &
 \lGamma \ar@{=}[dr] \ar@{=>}^{\eta\circ\lGamma}[r] & \lGamma\circ\Gr\circ
 \lGamma \ar@{=>}^{\lGamma\circ\varepsilon}[d] \\ 
  \Gr\circ\lGamma\circ\Gr \ar@{=>}_{\varepsilon\circ\Gr}[r] & \Gr & &
 \lGamma 
 }
 \]

 For an oplax functor $X$, consider the composition
 \[
  \Gr(X) \rarrow{\Gr(\eta_X)} \Gr(\lGamma(\mu_X))
 \rarrow{\varepsilon_{\Gr(X)}} \Gr(X). 
 \]
 For objects, we have
 \[
  (x,i) \longmapsto (x,1_i,i) \longmapsto (x,i).
 \]
 For objects $(x,i), (x',i')$ in $\Gr(X)$, we have
 \begin{eqnarray*}
  \Gr(\lGamma(\mu_X))(\eta_X(x,i),\eta_X(x',i')) & = &
   \bigoplus_{u:i\to i'}
   \lGamma(\mu_X)(i')(\lGamma(\mu_X)(u)(x,1_i),(x',1_{i'})) \\
  & = & \bigoplus_{u:i\to i'} \lGamma(\mu_X)(i')((x,u),(x',1_{i'})) 
 \end{eqnarray*}
 and each component $\lGamma(\mu_X)(i')((x,u),(x',1_{i'}))$ can be
 identified with $X(i')(X(u)(x),x')$. It follows that the composition
 \[
  \Gr(X)((x,i),(x',i')) \longrightarrow
 \Gr(\lGamma(\mu_X))(\eta_X(x,i),\eta_X(x',i')) \longrightarrow
 \Gr(X)((x,i),(x,i')) 
 \]
 is the identity.

 For an $I$-graded category $\mu : E \to E\otimes (I\otimes 1)$,
 consider the composition
 \[
  \lGamma(\mu) \rarrow{\eta_{\lGamma(\mu)}} \lGamma(\Gr({\lGamma(\mu)}))
 \rarrow{\lGamma(\varepsilon_{\mu})} \lGamma(\mu)
 \]
 of morphisms of oplax functors. Note that $\lGamma$ takes values in
 strict functors and $\eta_{\lGamma(\mu)}$ and $\lGamma(\varepsilon_{\mu})$
 are ordinary natural transformations. Thus
 it suffices to consider the composition 
 \[
  \lGamma(\mu)(i) \rarrow{\eta_{\lGamma(\mu)}} \lGamma(\Gr(\lGamma(\mu)))(i)
 \rarrow{\lGamma(\varepsilon_{\mu})} \lGamma(\mu)(i)
 \]
 of $\bm{V}$-functors for each object $i\in I_0$.

 Objects of $\lGamma(\Gr(\lGamma(\mu)))(i)$ are
 \[
  \lGamma(\Gr(\lGamma(\mu)))(i)_0 = \coprod_{(e,p(e)\rarrow{u}i,i)\in
 \Gr(\lGamma(\mu))_0} \{(e,p(e)\rarrow{u}i,i)\}\times\Mor_{I}(i,i)
 \]
 and, for $(e,u)\in \lGamma(\mu)(i)_0$,
 \begin{eqnarray*}
  (\lGamma(\varepsilon_{\mu})\circ\eta_{\lGamma(\mu)})(e,u) & = &
   \lGamma(\varepsilon_{\mu})((e,u,i),1_i) \\
  & = & (\varepsilon_{\mu}'(e,u,i),1_i\circ\varepsilon_{\mu}''(e,u,i)) \\
  & = & (e,u).
 \end{eqnarray*}
 The composition
 \[
  \lGamma(\mu)(i)((e,u),(e',u')) \rarrow{\eta_{\lGamma(\mu)}}
 \lGamma(\Gr(\lGamma(\mu)))(i)((e,u,i), (e',u',i))
 \rarrow{\lGamma(\varepsilon_{\mu})} 
 \lGamma(\mu)(i)((e,u),(e',u')) 
 \]
 is easily seen to be the identity, since $\eta$ is given by the
 canonical inclusion and $\varepsilon'$ is given by the projection. 
\end{proof}
\subsection{The Smash Product Construction for Precofibered and
  Prefibered Categories} 
\label{fibered}

In this section, we study the relations between $\lGamma$ and
$\Gamma_{\cof}$ and $\rGamma$ and $\Gamma_{\fib}$ for precofibered and
prefibered categories, respectively. We concentrate on the
case of precofibered categories. The case of prefibered categories is
analogous and is left to the reader.

\begin{lemma}
 Let $\mu : E \to E\otimes(I\otimes 1)$ be a precofibered $I$-graded
 category. The family of functors
 \[
  i_i : \Gamma_{\cof}(\mu)(i) \longrightarrow \lGamma(\mu)(i)
 \]
 indexed by $i\in I_0$ defines a left transformation of oplax functors 
 \[
  i : \Gamma_{\cof}(\mu) \longrightarrow \lGamma(\mu).
 \]
\end{lemma}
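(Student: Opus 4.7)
The plan is to exhibit, for each morphism $u : i \to j$ in $I$, a $\bm{V}$-natural transformation
$\varphi(u) : \lGamma(\mu)(u)\circ i_i \Longrightarrow i_j\circ \Gamma_{\cof}(\mu)(u)$,
and then check the two axioms in the definition of left transformation of oplax functors. Unwinding the definitions, on an object $e\in (E|_i)_0$ the source evaluates to $(e,u)\in (\mu\downarrow j)_0$ while the target evaluates to $i_j(s_j(e,u)) = (s_j(e,u),1_j)$. I will take $\varphi(u)(e)$ to be the component $\eta_j(e,u)$ of the unit $\eta_j : 1_{\mu\downarrow j}\Longrightarrow i_j\circ s_j$ of the adjunction $s_j\dashv i_j$ coming from the precofibered structure. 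More invariantly, $\varphi(u)$ is the whiskered transformation $\eta_j \cdot (\overleftarrow{\Gamma}(\mu)(u)\circ i_i)$, and its $\bm{V}$-naturality in $e$ is inherited from that of $\eta_j$.

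Next I will verify the identity axiom. Since $\lGamma(\mu)$ is a strict functor, its oplax structure $2$-morphism $\lGamma(\mu)(1_i)\Rightarrow 1_{\lGamma(\mu)(i)}$ is the identity. On the other side, the oplax structure of $\Gamma_{\cof}(\mu)$ at $1_i$ is the counit $\varepsilon_i : s_i\circ i_i \Rightarrow 1_{E|_i}$ of the same adjunction. The axiom therefore reduces to the equation
\[
 (i_i\cdot \varepsilon_i)\circ(\eta_i\cdot i_i) \;=\; 1_{i_i},
\]
which is precisely one of the triangle identities for $s_i\dashv i_i$.

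For the composition axiom, fix $i\rarrow{u}j\rarrow{v}k$ and recall that $\theta^{\Gamma_{\cof}}_{v,u}$ is defined by whiskering $\eta_j$ into $s_k\circ\overleftarrow{\Gamma}(\mu)(v)\circ\overleftarrow{\Gamma}(\mu)(u)\circ i_i$. Evaluating at $e\in (E|_i)_0$, the two sides of the axiom become the two composites
\[
 \eta_k(s_j(e,u),v)\circ \lGamma(\mu)(v)(\eta_j(e,u)) \quad\text{and}\quad (i_k\circ s_k)\bigl(\lGamma(\mu)(v)(\eta_j(e,u))\bigr)\circ\eta_k(e,v\circ u),
\]
both viewed as morphisms $(e,v\circ u)\to (s_k(s_j(e,u),v),1_k)$ in $\mu\downarrow k$. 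These coincide by the naturality square of $\eta_k$ applied to the morphism $\lGamma(\mu)(v)(\eta_j(e,u)) : (e,v\circ u)\to(s_j(e,u),v)$.

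The main obstacle will be purely bookkeeping: carefully unfolding the nested whiskerings that define $\theta^{\Gamma_{\cof}}_{v,u}$ and matching them against the diagram appearing in the definition of a left transformation, so that one can recognize the two composites above as the two sides of a naturality square of $\eta_k$. Conceptually, everything reduces to the adjunctions $s_i\dashv i_i$ together with $\bm{V}$-naturality of their units and counits and the triangle identities.
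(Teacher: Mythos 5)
Your proof is correct and takes the same route as the paper: the component $\varphi(u)$ is obtained by whiskering the unit $\eta_{i'}\colon 1\Rightarrow i_{i'}\circ s_{i'}$ of the adjunction $s_{i'}\dashv i_{i'}$ with $\overleftarrow{\Gamma}(\mu)(u)\circ i_i$. You go further than the paper (which stops at ``which induces a left transformation'') by actually checking the two axioms via a triangle identity and the naturality of $\eta_k$, and those verifications are accurate.
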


\begin{proof}
 For $u : i \to i'$ in $I$, we have the following diagram
 \[
  \begin{diagram}
   \node{\Gamma_{\cof}(\mu)(i)} \arrow[3]{s,l}{\Gamma_{\cof}(\mu)(u)}
   \arrow[2]{e,t}{i_{i}} \arrow{se,t}{i_i} \node{} 
   \node{\lGamma(\mu)(i)} 
   \arrow[3]{s,r}{\lGamma(\mu)(u)} \arrow{sw,=} \\
   \node{} \node{\lGamma(\mu)(i)} \arrow{s,r}{\lGamma(\mu)(u)} \node{} \\
   \node{} \node{\lGamma(\mu)(i')} \arrow{sw,t}{s_{i}} \arrow{se,=} \node{} \\
   \node{\Gamma_{\cof}(\mu)(i')} \arrow[2]{e,t}{i_{i'}} \node{}
   \node{\lGamma(\mu)(i')} 
  \end{diagram}
 \]
 Since $s_{i'}$ is left adjoint to $i_{i'}$, we have a $\bm{V}$-natural
 transformation
 \[
  1_{\lGamma(\mu)(i')} \Longrightarrow i_{i'}\circ s_{i'},
 \]
 which induces a left transformation.
\end{proof}

We only need the fact that $s_{i'}$ is left adjoint to $i_{i'}$ in the
above proof and thus
a similar argument implies that we obtain a right transformation from
$\{s_i\}$. 

\begin{lemma}
 The functors
 \[
  s_i : \lGamma(\mu)(i) \longrightarrow \Gamma_{\cof}(\mu)(i)
 \]
 defines a right transformation of oplax functors from $\lGamma(\mu)$ to
 $\Gamma_{\cof}(\mu)$. 
\end{lemma}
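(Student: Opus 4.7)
The plan is to construct, for each morphism $u : i \to i'$ in $I$, a $\bm{V}$-natural transformation
\[
\psi(u) : s_{i'} \circ \lGamma(\mu)(u) \Longrightarrow \Gamma_{\cof}(\mu)(u) \circ s_i,
\]
and then to verify the two coherence axioms for a right transformation of oplax functors. The strategy is entirely dual to the preceding lemma for $\{i_i\}$, with the role of the counit being played (on the other side) by the unit of the adjunction $s_i \dashv i_i$.

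To define $\psi(u)$, I would unfold $\Gamma_{\cof}(\mu)(u) = s_{i'} \circ \lGamma(\mu)(u) \circ i_i$, so that the target of $\psi(u)$ reads $s_{i'} \circ \lGamma(\mu)(u) \circ i_i \circ s_i$. Then $\psi(u)$ is obtained by whiskering the unit
\[
\eta_i : 1_{\lGamma(\mu)(i)} \Longrightarrow i_i \circ s_i
\]
of the adjunction $s_i \dashv i_i$ on the left by the functor $s_{i'} \circ \lGamma(\mu)(u)$. Naturality in $u$ is automatic since $\eta_i$ is a natural transformation and whiskering preserves naturality.

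Next I would check the two required axioms. For the unit axiom, specialising to $u = 1_i$ one has $\lGamma(\mu)(1_i) = 1_{\lGamma(\mu)(i)}$, while the oplax unit constraint $\eta^{\Gamma_{\cof}}_i : \Gamma_{\cof}(\mu)(1_i) \Rightarrow 1_{\Gamma_{\cof}(\mu)(i)}$ is built (in the proof that $\Gamma_{\cof}(\mu)$ is oplax) from $\eta_i$, so the required identity collapses to one of the triangle identities for $s_i \dashv i_i$. For the composition axiom, given $i \xrightarrow{u} j \xrightarrow{v} k$, the pentagon comparing $\psi(v\circ u)$ with the pasted composite formed from $\psi(u)$, $\psi(v)$, the oplax structure $\theta^{\lGamma}_{v,u}$ of $\lGamma(\mu)$, and $\theta^{\Gamma_{\cof}}_{v,u}$ of $\Gamma_{\cof}(\mu)$, is to be checked. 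Because $\theta^{\Gamma_{\cof}}_{v,u}$ is itself defined by whiskering $\eta_j$, both sides of the pentagon expand to the same string of insertions of $\eta_i$, $\eta_j$, $\eta_k$ into the appropriate slot of $s_k \circ \lGamma(\mu)(v) \circ \lGamma(\mu)(u)$, and the equality reduces to the naturality of $\eta$ together with the triangle identity.

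The main obstacle is the 2-categorical bookkeeping for the composition axiom: each $\Gamma_{\cof}(\mu)(w)$ has to be systematically replaced by $s \circ \lGamma(\mu)(w) \circ i$, and one must trace carefully the positions into which copies of the unit are whiskered on each side of the pentagon. No new ingredient beyond the unit–counit axioms for $s_i \dashv i_i$ and the explicit description of the oplax structure on $\Gamma_{\cof}(\mu)$ (given in the proof of the lemma immediately preceding Definition \ref{fibered_category_definition}) is required; in particular, the argument is genuinely dual to the proof of the previous lemma, which justifies compressing it to a brief remark.
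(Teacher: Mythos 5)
Your construction is exactly the one the paper intends (its ``proof'' is just the remark that the argument for $\{i_i\}$ dualizes): whiskering the unit $\eta_i : 1_{\lGamma(\mu)(i)} \Rightarrow i_i\circ s_i$ on the left by $s_{i'}\circ\lGamma(\mu)(u)$ to land in $\Gamma_{\cof}(\mu)(u)\circ s_i$, with the coherence axioms following from the triangle identities and interchange. The only imprecision is the claim that the oplax unit constraint of $\Gamma_{\cof}(\mu)$ is built from $\eta_i$ --- it is the counit $\varepsilon_i : s_i\circ i_i \Rightarrow 1_{E|_i}$ --- but your conclusion that the unit axiom collapses to the triangle identity $(\varepsilon_i s_i)(s_i\eta_i)=1_{s_i}$ is correct.
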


When the natural transformations
$\varepsilon_i : s_i\circ i_i \Rightarrow 1_{\Gamma_{\cof}(\mu)(i)}$ are
natural isomorphisms, we obtain a left transformation.

\begin{corollary}
 When $\mu$ is precofibered and $\{\varepsilon_i\}$ are natural
 isomorphisms, we obtain a left transformation 
 \[
  s : \lGamma(\mu) \longrightarrow \Gamma_{\cof}(\mu).
 \]
\end{corollary}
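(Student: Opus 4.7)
The plan is to promote the data of the right transformation constructed in the preceding lemma to a left transformation by inverting the $2$-cells that connect $i_i \circ s_i$ with $1_{\mu\downarrow i}$. A left transformation $s : \lGamma(\mu) \to \Gamma_{\cof}(\mu)$ demands, for each morphism $u : i \to j$ in $I$, a $\bm{V}$-natural transformation
\[
\varphi(u) : \Gamma_{\cof}(\mu)(u) \circ s_i \Longrightarrow s_j \circ \lGamma(\mu)(u).
\]
Unfolding the definition $\Gamma_{\cof}(\mu)(u) = s_j \circ \lGamma(\mu)(u) \circ i_i$, this is the same datum as a map
\[
s_j \circ \lGamma(\mu)(u) \circ i_i \circ s_i \Longrightarrow s_j \circ \lGamma(\mu)(u).
\]
Under the hypothesis that the natural transformations $\varepsilon_i$ relating $i_i \circ s_i$ and $1_{\mu\downarrow i}$ are invertible, I would whisker their inverses $\varepsilon_i^{-1}$ by $s_j \circ \lGamma(\mu)(u)$ on the left to define $\varphi(u)$.

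With $\varphi(u)$ in hand, what remains is to verify the two coherence axioms for a left transformation of oplax functors. The unit axiom, at $u = 1_i$, reduces to a triangle identity for the adjunction $s_i \dashv i_i$: the identity coherence of $\Gamma_{\cof}(\mu)$ was itself built from $\varepsilon_i$ in the proof that $\Gamma_{\cof}(\mu)$ is oplax, so the equation collapses to $\varepsilon_i \cdot \varepsilon_i^{-1} = 1$. The composition axiom, for morphisms $i \xrightarrow{u} j \xrightarrow{v} k$, compares $\varphi(v \circ u)$ to the pasting of $\varphi(u)$ and $\varphi(v)$ through the oplax composition structure $\theta_{v,u}^{\Gamma_{\cof}}$; since $\theta_{v,u}^{\Gamma_{\cof}}$ is itself defined in the proof of the earlier proposition by whiskering $\varepsilon_j$, the two composites differ only by an insertion of $\varepsilon_j$ followed by $\varepsilon_j^{-1}$, and naturality of $\varepsilon_j$ with respect to morphisms of $\mu\downarrow j$ closes the argument.

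The principal obstacle is the bookkeeping in the composition axiom. One must simultaneously track three pieces of $2$-categorical data at distinct grading indices: the oplax structure $\theta_{v,u}^{\lGamma}$ of $\lGamma(\mu)$, the oplax structure $\theta_{v,u}^{\Gamma_{\cof}}$ of $\Gamma_{\cof}(\mu)$, and the whiskerings of $\varepsilon_i^{-1}$ and $\varepsilon_j^{-1}$ used to build $\varphi(u)$ and $\varphi(v)$. Once the full pasting diagram is drawn, however, the verification becomes a mechanical consequence of triangle identities and naturality; the conceptual content is just that the same family $\{\varepsilon_i\}$ controls both the oplax structure of $\Gamma_{\cof}(\mu)$ and the $2$-cells of the purported left transformation, so they automatically assemble compatibly.
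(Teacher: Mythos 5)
Your overall strategy---reverse the right transformation of the preceding lemma by inverting the whiskered $2$-cells---is the intended one (the paper supplies no argument for this corollary beyond that remark), and your identification of the required datum $\varphi(u):\Gamma_{\cof}(\mu)(u)\circ s_i\Rightarrow s_j\circ\lGamma(\mu)(u)$, i.e.\ a map $s_j\circ\lGamma(\mu)(u)\circ i_i\circ s_i\Rightarrow s_j\circ\lGamma(\mu)(u)$, is correct. The gap is in which natural transformation you invert. Throughout you describe $\varepsilon_i$ as ``relating $i_i\circ s_i$ with $1_{\mu\downarrow i}$,'' but in the paper $\varepsilon_i$ is the \emph{counit} of the adjunction $s_i\dashv i_i$, a $2$-cell $s_i\circ i_i\Rightarrow 1_{E|_i}$ living on $E|_i=\Gamma_{\cof}(\mu)(i)$; the $2$-cell connecting $i_i\circ s_i$ to $1_{\mu\downarrow i}$ is the \emph{unit} $\eta_i$. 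It is $\eta_i$, whiskered by $s_j\circ\lGamma(\mu)(u)$, that constitutes the $2$-cells of the right transformation (and likewise the oplax structure $\theta_{u',u}$ of $\Gamma_{\cof}(\mu)$ is built from $\eta_{i'}$, not from $\varepsilon_{i'}$ as you assert; it is $\Gamma_{\fib}$ whose lax structure uses $\varepsilon$). Your proposed whiskering of $\varepsilon_i^{-1}$ by $s_j\circ\lGamma(\mu)(u)$ does not even typecheck: $s_j\circ\lGamma(\mu)(u)$ has domain $\mu\downarrow i$, while $\varepsilon_i^{-1}$ is a $2$-cell between endofunctors of $E|_i$, and the factor you need to cancel inside $\Gamma_{\cof}(\mu)(u)\circ s_i$ is $i_i\circ s_i$, not $s_i\circ i_i$.

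Consequently the hypothesis your construction actually consumes is invertibility of $\eta_i$ (equivalently, full faithfulness of $s_i$; this is the ``cofibered'' condition of Definition~\ref{fibered_category_definition}), which is not implied by invertibility of $\varepsilon_i$ (full faithfulness of $i_i$): a reflective subcategory has invertible counit but non-invertible unit. So either you must replace $\varepsilon_i^{-1}$ by $\eta_i^{-1}$ and adjust the hypothesis accordingly, or you must produce $\varphi(u)$ from $\varepsilon_i^{-1}$ alone by some other route, which your write-up does not do. (Part of the blame lies with the statement itself, whose hypothesis names $\varepsilon_i$ while the evident construction needs $\eta_i$; but a proof has to confront this rather than elide it.) Once the correct $2$-cell is in place, your outline of the two coherence checks---triangle identities plus naturality of the inverted cell---is the right shape.
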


\begin{corollary}
 Let $\mu$ be a precofibered $I$-graded $\bm{V}$-category. If the
 natural transformations
 \begin{eqnarray*}
  \varepsilon_i & : & s_i\circ i_i \Longrightarrow
   1_{\Gamma_{\cof}(\mu)(i)} \\
  \eta_i & : & 1_{\lGamma(\mu)(i)} \Longrightarrow i_i\circ s_i
 \end{eqnarray*}
 are natural isomorphisms, for any oplax
 functor 
 $X : I \to \categories{\bm{V}}$, the above left transformations induce
 an equivalence of categories 
 \[
  i : \lOplax(I,\categories{\bm{V}})(X,\lGamma(\mu)) \longleftrightarrow
 \lOplax(I,\categories{\bm{V}})(X,\Gamma_{\cof}(\mu)) : s. 
 \]
\end{corollary}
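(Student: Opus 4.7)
The plan is to promote the pointwise adjunctions $s_i \dashv i_i$ to an equivalence of the left transformations $i$ and $s$ inside the 2-category $\lOplax(I,\categories{\bm{V}})$, and then read off the equivalence of hom-categories by post-composition. Throughout I use the notation of the previous two lemmas and their corollary.

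First, I would package the given $\{\eta_i\}$ and $\{\varepsilon_i\}$ as candidate modifications
\[
\eta : 1_{\lGamma(\mu)} \Longrightarrow i\circ s,
\qquad
\varepsilon : s\circ i \Longrightarrow 1_{\Gamma_{\cof}(\mu)}.
\]
Componentwise these are the adjunction unit and counit, which are $\bm{V}$-natural isomorphisms by hypothesis. What has to be shown is that these families are compatible with the oplax/left-transformation data of the two sides, i.e.\ that for every morphism $u : i \to j$ in $I$ they satisfy the hexagonal coherence required of a morphism of left transformations in Definition of $2$-morphisms in $\lOplax(I,\categories{\bm{V}})$.

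Second, I would carry out that coherence check. For $\eta$, this amounts to verifying that the square
\[
\xymatrix{
\lGamma(\mu)(u) \ar@{=>}[r]^{\eta_j \ast \lGamma(\mu)(u)} \ar@{=}[d] &
(i_j\circ s_j)\circ \lGamma(\mu)(u) \ar@{=>}[d] \\
\lGamma(\mu)(u)\circ 1_{\lGamma(\mu)(i)} \ar@{=>}[r] &
\lGamma(\mu)(u)\circ (i_i\circ s_i)
}
\]
commutes, where the right-hand vertical is obtained from the left transformation structure of $i$ composed with that of $s$ (Lemmas defining $i$ and $s$ as transformations). This reduces, by the construction of the transformation structure from the triangle identities for $s_i \dashv i_i$, to a standard mate computation for adjunctions: the 2-cell $\lGamma(\mu)(u)\circ i_i\circ s_i \Rightarrow i_j\circ s_j\circ \lGamma(\mu)(u)$ is the mate, under $s_i \dashv i_i$ and $s_j \dashv i_j$, of the equality $\lGamma(\mu)(u)\circ i_i = i_j\circ \Gamma_{\cof}(\mu)(u)\circ (\text{coherence})$, and under the invertibility of both $\eta$ and $\varepsilon$ this mate agrees with $\eta_j$ pasted on the left. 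The analogous check for $\varepsilon$ uses that $\Gamma_{\cof}(\mu)(u) = s_j\circ \overleftarrow{\Gamma}(\mu)(u)\circ i_i$ by definition. This coherence verification, essentially the naturality of taking mates with respect to adjunctions along the 1-morphisms $\lGamma(\mu)(u)$, is the main technical obstacle.

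Third, once $\eta$ and $\varepsilon$ are invertible modifications in $\lOplax(I,\categories{\bm{V}})$, post-composition (which is a 2-functor on hom-categories in any 2-category) yields functors
\[
i_\ast : \lOplax(I,\categories{\bm{V}})(X,\Gamma_{\cof}(\mu)) \longleftrightarrow
\lOplax(I,\categories{\bm{V}})(X,\lGamma(\mu)) : s_\ast
\]
together with natural isomorphisms $s_\ast\circ i_\ast = (s\circ i)_\ast \cong 1$ and $i_\ast\circ s_\ast = (i\circ s)_\ast \cong 1$ obtained by whiskering $\varepsilon$ and $\eta$ on the right by an arbitrary $1$-morphism $X \to \Gamma_{\cof}(\mu)$ or $X \to \lGamma(\mu)$. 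This produces the required equivalence of categories. The assertion that whiskered invertible modifications are again invertible is automatic in any strict 2-category, so no further work is needed once the coherence of $\eta$ and $\varepsilon$ has been established.
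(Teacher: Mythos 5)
Your proposal is correct and follows exactly the route the paper intends: the corollary is stated without proof as a formal consequence of the preceding lemmas, namely that $i$ and $s$ are left transformations whose composites are connected to the identities by the (now invertible) unit and counit, whiskered into modifications, so post-composition gives the equivalence of hom-categories. Your coherence check in the second step (which reduces to the interchange law once one notes that the structure $2$-cells of $i$ and $s$ are themselves built by whiskering $\eta_{i}$ and its inverse) is the only content the paper leaves implicit, and you handle it correctly.
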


\begin{corollary}
 Under the assumptions as above, we have an equivalence of categories
 \[
  \lcats{\bm{V}}_I(\Gr(X),\mu) \simeq
 \lOplax(I,\categories{\bm{V}})(X,\Gamma_{\cof}(\mu)). 
 \]
\end{corollary}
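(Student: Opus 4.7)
The plan is to derive this equivalence by concatenating two results that have just been established, with essentially no new construction required. The starting point is Theorem \ref{Gr_and_Gamma_are_adjoint}, which provides a natural isomorphism of categories
\[
\lcats{\bm{V}}_I(\Gr(X),\mu) \cong \lOplax(I,\categories{\bm{V}})(X,\lGamma(\mu)),
\]
valid for any oplax functor $X$ and any $I$-graded category $\mu$, with no precofibered hypothesis needed.

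The second ingredient is the immediately preceding corollary, which under the hypothesis that the unit $\eta_i$ and counit $\varepsilon_i$ of the adjunctions $s_i \dashv i_i$ are natural isomorphisms furnishes an equivalence of categories
\[
\lOplax(I,\categories{\bm{V}})(X,\lGamma(\mu)) \simeq \lOplax(I,\categories{\bm{V}})(X,\Gamma_{\cof}(\mu)),
\]
implemented concretely by post-composition with the left transformations $i : \Gamma_{\cof}(\mu) \to \lGamma(\mu)$ and $s : \lGamma(\mu) \to \Gamma_{\cof}(\mu)$ constructed earlier in this subsection.

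Composing the isomorphism of the first display with the equivalence of the second display yields the asserted equivalence, since an isomorphism followed by an equivalence is again an equivalence. There is essentially no obstacle: the entire content of the corollary is that the two previously proved facts chain together. If anything needs checking at all, it is merely the bookkeeping observation that the post-composition functors induced by $i$ and $s$ (respectively) are inverse equivalences, which is exactly what the previous corollary asserts, so no further verification is required.
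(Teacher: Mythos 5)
Your proposal is correct and is exactly the argument the paper intends: the corollary is stated without proof as the immediate composite of the isomorphism of Theorem \ref{Gr_and_Gamma_are_adjoint} with the equivalence of the preceding corollary (noting, as you implicitly do, that the hypothesis that the $\varepsilon_i$ are isomorphisms is what makes $s$ a left transformation so that post-composition lands in the right hom-category). Nothing further is needed.
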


In other words, under these assumptions, $\Gamma_{\cof}$ can be regarded
as a right adjoint functor to $\Gr$ in a weak sense.

\appendix

\section{Appendices}

\subsection{Enriched Categories by Comodules}
\label{enrichment_by_comodule}

Given a set $S$, the category of quivers with the set of vertices is the
comma category
\[
 \category{Quivers}(S) = \Sets\downarrow (S\times S).
\]
This category can be made into a monoidal category under the pullback
\[
 \begin{diagram}
  \node{Y\times_S X} \arrow[2]{e} \arrow[2]{s} \node{} \node{X}
  \arrow{s} \\ 
  \node{} \node{} \node{S\times S} \arrow{s,r}{\pr_1} \\
  \node{Y} \arrow{e} \node{S\times S} \arrow{e,t}{\pr_2}  \node{S.}
 \end{diagram}
\]
It is a well-known fact that a category $X$ with the set of objects $S$ is
a monoid object in this monoidal category. See \cite{math/0301209}, for
example. This characterization can be generalized to a definition of
small enriched categories
by using the notion of bicomodules, which is the subject of this
appendix. For the standard definition of 
enriched categories, see \S\ref{enriched_category}.

Our starting point is to regard $C$-$C$-bimodules as ``quivers enriched
over $\bm{V}$ with vertices given by $C$''.

\begin{definition}
 When $C=S\otimes 1$ for a set $S$, $C$-$C$-bicomodules are called
 $\bm{V}$-quivers with the set of vertices $S$. The category of
 $\bm{V}$-quivers with the set of vertices $S$ is denoted by
 $\bm{V}\text{-}\category{Quivers}(S)$. 
\end{definition}

Lemma \ref{monoidal_category_under_cotensor} allows us to define
enriched categories without referring to 
each object.

\begin{definition}
 \label{enriched_category_definition2}
 Let $C$ be a flat comonoid object in $\bm{V}$.
 Then a monoid object in the monoidal category 
 $(\biComod{C}{C}, \Box_C, C)$ is called a category object in
 $\bm{V}$ with objects $C$.
 
 For a category object $M$ in $\bm{V}$ with objects $C$, the right and
 left coactions are denoted by
 \begin{eqnarray*}
  s & : & M \longrightarrow M\otimes C \\
  t & : & M \longrightarrow C\otimes M
 \end{eqnarray*}
 and called the source and the target.

 When $C=S\otimes 1$ for a set $S$, a category object in
 $\bm{V}$ with objects $S\otimes 1$ is called a category enriched over
 $\bm{V}$, or simply $\bm{V}$-category, with the set of objects 
 $S$.
\end{definition}

We use the following convention for simplicity.

\begin{convention}
 For a $\bm{V}$-category $X$, the set of objects is denoted by $X_0$. As
 an object of $\bm{V}$, $X$ is denoted by $X_1$.

 We also use traditional notations described in
 \S\ref{enriched_category}, when we have a coproduct decomposition 
 \[
  X_1 = \bigoplus_{x,y\in X_0} X(x,y).
 \]
\end{convention}

\begin{example}
 Let $\bm{V}=\lMod{k}$. The canonical natural transformation
 \[
  \theta_{S,T} : (S\times T)\otimes 1 \longrightarrow (S\otimes 1)\otimes
 (T\otimes 1)
 \]
 is an isomorphism for any $S$ and $T$.

 A category $A$ enriched over $\lMod{k}$ with the set of objects $A_0$
 is a $k$-module $A_1$ equipped with a bicomodule structure 
 \begin{eqnarray*}
  s & : & A_1 \longrightarrow A_1\otimes (A_0\otimes 1), \\
  t & : & A_1 \longrightarrow (A_0\otimes 1)\otimes A_1
 \end{eqnarray*}
 and a monoid structure
 \[
  \circ : A_1\Box_{A_0\otimes 1} A_1 \longrightarrow A_1.
 \]

 Lemma \ref{decomposition_and_comodule} implies that we have a coproduct
 decomposition
 \[
  A_1 \cong \bigoplus_{(a,b)\in A_0\times A_0} A(a,b)
 \]
 and
\[
 A_1\Box_{A_0\otimes 1}A_1 = \bigoplus_{a,b,c\in A_0} A(b,c)\otimes
 A(a,b). 
\]
 The monoid structure $\circ$ induces
 \[
  \circ : A(b,c)\otimes A(a,b) \longrightarrow A(a,c).
 \]
 The unit
 \[
  \iota : A_0\otimes 1 \longrightarrow A_1
 \]
 induces a morphism
 \[
  1 \rarrow{s} \{s\}\otimes 1 \rarrow{\iota} A(a,a)
 \]
 serving as identity morphisms.
 And we obtain the standard definition of $k$-linear category.

 Note that we the inclusion
 \[
  A_1\Box_{A_0\otimes 1} A_1 \hookrightarrow A_1\otimes A_1
 \]
 has a canonical retraction
 \[
  r : A_1 \otimes A_1 \longrightarrow A_1\Box_{A_0\otimes 1} A_1
 \]
 and the composition
 \[
  A_1\otimes A_1 \rarrow{r} A_1\Box_{A_0\otimes 1} A_1 \rarrow{\circ} A_1  
 \]
 makes $A_1$ into an algebra (possibly without a unit). This is the
 algebra associated with a $k$-linear category $A$, which is often
 denoted by $\Lambda(A)$. See
 \cite{Gerstenhaber-Schack83,math.RA/0305218}, for 
 example. 
\end{example}

\begin{example}
 \label{free_quiver}
 Let $Q$ be a quiver with the set of vertices $Q_0$, i.e.\ a diagram
 \[
  \xymatrix{Q_1 \ar@<1ex>[r]^{s} \ar@<-1ex>[r]_{t} & Q_0.}
 \]
 We obtain a $(Q_0\otimes 1)$-$(Q_0\otimes 1)$-bimodule structure on
 $Q_1\otimes 1$ by
 \[
  \begin{diagram}
   \node{} \node{Q_1\otimes 1} \arrow{s,r}{\Delta\otimes 1}
   \arrow{sse,l}{t} 
   \arrow{ssw,l}{s} 
   \node{} \\
   \node{} \node{(Q_1\times Q_1)\times 1} \arrow{s,r}{\theta} \node{} \\
   \node{(Q_1\otimes 1) \otimes (Q_0\otimes 1)} \node{(Q_1\otimes
   1)\otimes (Q_1\otimes 1)} \arrow{e,t}{t\otimes 1}
   \arrow{w,t}{s\otimes 1} \node{(Q_0\otimes
   1)\otimes (Q_1\otimes 1).}
  \end{diagram}
 \]
 We obtain a functor
 \[
  (-)\otimes 1 : \category{Quivers}(Q_0) \longrightarrow
 \biComod{(Q_0\otimes 1)}{(Q_0\otimes 1)} =
 {\bm{V}}\text{-}\category{Quivers}(Q_0).  
 \]
\end{example}

\begin{example}
 Let $\bm{V}$ be the category $\Spaces$ of topological spaces.
 As we have seen in Example \ref{decomposition_in_product_type}, any
 object $C$ is a comonoid and a coaction of $C$ on another object $M$ is
 determined by a morphism.
 \[
  \pi : M \longrightarrow C.
 \]
 Thus the category $\biComod{C}{C}$ of bicomodules over $C$
 can be identified with the comma category
 $\Spaces\downarrow C\otimes C$, since the monoidal structure is given
 by the product. The corresponding 
 monoidal structure on $\Spaces\downarrow C\otimes C$ is the monoidal
 structure described at the beginning of this section. A monoid object
 $M$ in this monoidal category is, therefore, a topological category with
 the space of objects $C$.

 When $C$ has a discrete topology, i.e.\ $C=S\otimes 1$ for a set $S$,
 we have a coproduct decomposition 
 \[
  M \cong \coprod_{(s,t)\in S} M(s,t)
 \]
 as we have seen in Example \ref{decomposition_in_product_type} and we
 obtain the standard definition of a category enriched over $\Spaces$
 with the set of objects $S$.
\end{example}

\begin{definition}
 Let $A$ and $B$ be $\bm{V}$-categories. A $\bm{V}$-functor from $A$ to
 $B$ is a morphism of bicomodules
 \[
  f = (f_0,f_1) : A \longrightarrow B
 \]
 making the following diagrams commutative
 \[
  \begin{diagram}
   \node{A_0\otimes 1} \arrow{s,l}{f_0\otimes 1} \arrow{e,t}{\iota}
   \node{A_1} \arrow{s,r}{f_1} \node{A_1\Box_{A_0} A_1}
   \arrow{s,l}{f_1\Box f_1} \arrow{e,t}{\circ} \node{A_1}
   \arrow{s,r}{f_1} \\ 
   \node{B_0\otimes 1} \arrow{e,t}{\iota} \node{B_1}
   \node{B_1\Box_{B_0} B_1}
   \arrow{e,t}{\circ} \node{B_1.} 
  \end{diagram}
 \]

 The category of $\bm{V}$-categories and $\bm{V}$-functors is denoted by
 $\categories{\bm{V}}$. 
\end{definition}

A monoidal structure on $\categories{\bm{V}}$ is induced from the
following monoidal structure on the category of bicomodules.

\begin{definition}
 For bicomodules $M$ and $N$ over $C$ and $D$, respectively,
 define left and right coactions of $C\otimes D$ on
 $M\otimes N$ by
 \begin{eqnarray*}
  M\otimes N & \rarrow{\mu^R_M\otimes \mu^R_N} & (M\otimes C)\otimes
   (N\otimes D) 
 \cong (M\otimes N)\otimes (C\otimes D), \\
  M\otimes N & \rarrow{\mu^L_M\otimes \mu^L_N} & (C\otimes M)\otimes
   (D\otimes N) \cong (C\otimes D)\otimes (M\otimes N).
 \end{eqnarray*}
\end{definition}

\begin{lemma}
 \label{monoidal_category_of_comodules}
 The above operation makes the category $\category{Bicomodules}(\bm{V})$
 of bicomodules in $\bm{V}$ into a
 symmetric monoidal category. The unit is given by 
 \[
  1 \longrightarrow 1\otimes 1.
 \]
\end{lemma}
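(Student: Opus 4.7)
The plan is to inherit all the monoidal structure from $\bm{V}$ itself and merely verify, at each step, that the inherited data respects the bicomodule structure. There is nothing deeper going on here than bookkeeping with the symmetry, associator, and unitors of $\bm{V}$, but it needs to be organized carefully.

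First I would verify that $M\otimes N$ equipped with the stated coactions is genuinely a $(C\otimes D)$-$(C\otimes D)$-bicomodule. Coassociativity of the right coaction follows from coassociativity of $\mu^R_M$ and $\mu^R_N$ combined with the fact that $C\otimes D$ becomes a comonoid via $\Delta_{C\otimes D}=(1\otimes t_{C,D}\otimes 1)\circ(\Delta_C\otimes\Delta_D)$; counitality reduces similarly to the counitalities of $M$ and $N$. The left coaction is symmetric. The left-right compatibility of the bicomodule structure on $M\otimes N$ is the first place where the symmetry $t$ of $\bm{V}$ is genuinely used: one rearranges $(C\otimes D)\otimes(M\otimes N)\otimes(C\otimes D)$ into the shape dictated by the diagram in Definition of bicomodule, and both paths reduce, via naturality of $t$ and the associator, to $(1\otimes t\otimes 1)\circ(\mu^L_M\otimes\mu^R_M\otimes\mu^L_N\otimes\mu^R_N)$ applied to $M\otimes N$, using the bicomodule compatibility for $M$ and for $N$ separately.

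Next I would check functoriality of $\otimes$ on morphisms of bicomodules in the sense of Definition \ref{morphism_of_comodules}: given $(f_0,f_1):M\to M'$ over $C\to C'$ and $(g_0,g_1):N\to N'$ over $D\to D'$, the pair $(f_0\otimes g_0,f_1\otimes g_1)$ is a morphism of $(C\otimes D)$- to $(C'\otimes D')$-bicomodules, which is immediate from the naturality of $t$ and the axioms on the comonoid morphisms $f_0$, $g_0$. Then I would take the associator $a_{M,N,P}$, the unitors $\ell_M$, $r_M$, and the symmetry $t_{M,N}$ of $\bm{V}$ and check that each is a morphism of bicomodules; the unit object is $1$ with its canonical comonoid structure (inverse of $\ell_1=r_1$, as in Example~\ref{everybody_is_comonoid} applied to the terminal-like role of $1$ for itself). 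For $a_{M,N,P}$ to be a bicomodule morphism, one compares the coaction of $(C\otimes D)\otimes E$ on $(M\otimes N)\otimes P$ with that of $C\otimes(D\otimes E)$ on $M\otimes(N\otimes P)$: both pull back along $a$ to the same map built from $\mu_M$, $\mu_N$, $\mu_P$ and suitable symmetries. Analogously for $\ell$, $r$, and $t$.

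Finally, the pentagon, triangle, and hexagon axioms for $(\category{Bicomodules}(\bm{V}),\otimes,1)$ follow from the corresponding axioms in $\bm{V}$, because each coherence morphism in the bicomodule category is by construction the underlying coherence morphism of $\bm{V}$, and these commutative diagrams are preserved when one restricts to the subcategory consisting of bicomodule morphisms. The main obstacle is purely notational: keeping track of the inserted symmetries $t$ that rearrange interleaved tensor factors of $C, D, E, M, N, P$ in the verifications of left-right compatibility and of the bicomodule-morphism status of $a$ and $t$. This is unavoidable because every structural map involves permuting a $(\text{comonoid})$-factor past a $(\text{comodule})$-factor, but all such diagrams reduce to instances of naturality of $t$ and the hexagon in $\bm{V}$.
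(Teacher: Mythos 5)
Your verification is correct and is exactly the routine diagram-chase the paper has in mind: the paper states Lemma~\ref{monoidal_category_of_comodules} without proof, treating it as standard, and your outline (bicomodule structure on $M\otimes N$ via the comonoid structure $(1\otimes t_{C,D}\otimes 1)\circ(\Delta_C\otimes\Delta_D)$ on $C\otimes D$, functoriality on morphisms in the sense of Definition~\ref{morphism_of_comodules}, the coherence isomorphisms of $\bm{V}$ being bicomodule morphisms over the corresponding comonoid isomorphisms, and inheritance of the pentagon/triangle/hexagon axioms) supplies precisely the missing details. The only nitpick is your parenthetical appeal to Example~\ref{everybody_is_comonoid} for the comonoid structure on $1$: that example concerns product-type monoidal categories, whereas the structure $\ell_1^{-1}=r_1^{-1}:1\to 1\otimes 1$ with counit $\id_1$ exists in any monoidal category and is what the lemma's displayed unit refers to.
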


For $\bm{V}$-categories $A$ and $B$, we would like to define a morphism
\[
 (A\otimes A)\Box_{(A_0\otimes 1)\otimes(B_0\otimes 1)} (B\otimes B)
 \longrightarrow A\otimes B.
\]
We have the following diagram
\begin{equation}
 \xymatrix{
 (A\otimes B)\Box_{(A_0\otimes 1)\otimes(B_0\otimes 1)} (A\otimes B)
 \ar[r] & (A\otimes B)\otimes (A\otimes B) \ar[d] \ar@<1ex>[r]^(.35){s}
 \ar@<-1ex>[r]_(.35){t} & (A\otimes B)\otimes (A_0\otimes 1)\otimes (B_0\otimes
 1)\otimes (A\otimes B) \ar[d] \\ 
 (A\Box_{A_0\otimes 1}A)\otimes (B\Box_{B_0\otimes 1}B) \ar[r] &
 (A\otimes A)\otimes (B\otimes B) \ar@<1ex>[r]^(.35){s}
 \ar@<-1ex>[r]_(.35){t} & (A\otimes 
 (A_0\otimes 1)\otimes A) \otimes
 (B\otimes (B_0\otimes 1)\otimes B).
 }
 \label{product_of_V-categories} 
\end{equation}

\begin{lemma}
 \label{monoidal_category_of_V-categories}
 Suppose the bottom row in the diagram (\ref{product_of_V-categories})
 is an equalizer. Then the resulting morphism
 \[
  (A\otimes B)\Box_{(A_0\otimes 1)\times (B_0\otimes 1)} (A\otimes B)
 \rarrow{} (A\Box_{A_0\otimes 1}
 A)\otimes (B\Box_{B_0\otimes 1} B) \rarrow{\circ \otimes \circ} A\otimes B.
 \]
 defines a structure of monoid on $A\otimes B$.

 If the bottom row in (\ref{product_of_V-categories}) is an equalizer
 for any $\bm{V}$-categories $A$ and $B$, then we obtain a symmetric
 monoidal structure on $\categories{\bm{V}}$. 
\end{lemma}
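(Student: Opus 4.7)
The plan is to first unpack the diagram (\ref{product_of_V-categories}) into a monoid structure, then assemble everything into a symmetric monoidal 2-category. Under the equalizer hypothesis, the bottom row identifies $(A\Box_{A_0\otimes 1}A)\otimes (B\Box_{B_0\otimes 1}B)$ as the equalizer of the parallel pair on $(A\otimes A)\otimes (B\otimes B)$ given by applying $s$ and $t$ in the two middle tensor factors. The top row is the defining equalizer of the cotensor product over $(A_0\otimes 1)\otimes (B_0\otimes 1)$. The vertical rearrangement map (built from associators and the symmetry $t$ of $\bm{V}$) intertwines the two parallel pairs, so by the universal property of the bottom equalizer there is a canonical morphism
\[
\Phi : (A\otimes B)\Box_{(A_0\otimes 1)\otimes(B_0\otimes 1)}(A\otimes B) \longrightarrow (A\Box_{A_0\otimes 1}A)\otimes (B\Box_{B_0\otimes 1}B),
\]
and the multiplication of $A\otimes B$ is defined as $(\circ_A\otimes\circ_B)\circ\Phi$. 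The unit is $\iota_A\otimes\iota_B:(A_0\otimes 1)\otimes(B_0\otimes 1)\to A\otimes B$.

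Next I would verify that $A\otimes B$ is a monoid in $\category{Bicomodules}(\bm{V})$. The bicomodule structure is the tensor product one from Lemma \ref{monoidal_category_of_comodules}. Associativity follows from the associativity of $\circ_A$ and $\circ_B$ separately: both sides of the associativity square for $A\otimes B$ fit, after applying the analogue $\Phi^{(3)}$ for the triple cotensor, into $(\circ_A\circ(\circ_A\Box 1))\otimes (\circ_B\circ(\circ_B\Box 1))$, which equals $(\circ_A\circ(1\Box\circ_A))\otimes (\circ_B\circ(1\Box\circ_B))$ by associativity in $A$ and $B$. Unitality reduces similarly to unitality in $A$ and $B$ via compatibility of $\iota_A\otimes\iota_B$ with the coactions.

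For the global claim, I would then define $\otimes$ on $1$-morphisms by tensoring the underlying bicomodule maps (which is automatic from functoriality of $\otimes$ on $\category{Bicomodules}(\bm{V})$ and compatibility with the monoid structures). The unit $\bm{V}$-category is the one-object category with hom $1$ and obvious bicomodule structure over $\{*\}\otimes 1 = 1$. Associator, left and right unitors, and symmetry are induced by those of $\bm{V}$: at the level of underlying objects of $\bm{V}$ they are $a$, $\ell$, $r$, $t$; one checks that these become morphisms of bicomodules and of monoids because they are natural with respect to the coactions and compositions, which themselves are built from $\otimes$ in $\bm{V}$. The pentagon, triangle, hexagon, and $t\circ t = 1$ axioms then follow tautologically from the corresponding axioms in $\bm{V}$.

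The main obstacle is the bookkeeping around $\Phi$: one must check that the parallel pair on the bottom row of (\ref{product_of_V-categories}) really is equalized by the composite that comes down from the top equalizer, and then that $\Phi$ is itself a morphism of $((A_0\otimes 1)\otimes(B_0\otimes 1))$-bicomodules so that the resulting multiplication lives in the correct monoidal category. Both are diagram chases using the bicomodule axioms for $A$ and $B$ and the flatness of $A_0\otimes 1$ and $B_0\otimes 1$ (which is used implicitly whenever one forms $\Box$). Once $\Phi$ is in hand and its naturality established, associativity, unitality, and the coherence of the global monoidal structure reduce to the corresponding identities in $\bm{V}$ applied componentwise; I would not grind these out in detail.
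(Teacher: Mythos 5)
Your argument is correct, and it is essentially the only reasonable one: the paper in fact states this lemma without proof, and your construction supplies exactly what is missing. The key point — that the equalizer hypothesis on the bottom row of (\ref{product_of_V-categories}) is used, via its universal property, to produce the comparison map $\Phi : (A\otimes B)\Box_{(A_0\otimes 1)\otimes(B_0\otimes 1)}(A\otimes B) \to (A\Box_{A_0\otimes 1}A)\otimes(B\Box_{B_0\otimes 1}B)$ (which is then automatically an isomorphism, since both rows are equalizers of isomorphic parallel pairs) — is identified correctly, and the reduction of associativity, unitality, and the coherence axioms to those of $A$, $B$, and $\bm{V}$ is routine as you say. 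One small correction: merely forming $\Box$ needs only the existence of equalizers (Assumption \ref{condition_on_V}), not flatness; flatness of the comonoids is what makes $\Box_C$ of bicomodules a bicomodule (Lemma \ref{monoidal_category_under_cotensor}), and the equalizer hypothesis of the present lemma is a separate, explicitly assumed condition rather than a consequence of flatness.
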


\begin{remark}
 The condition for the above Lemma is satisfied when $\bm{V}$ is the
 category of $k$-modules, chain complexes, topological spaces,
 simplicial sets, and small categories.
\end{remark}

We also have $2$-morphisms in $\categories{\bm{V}}$.

\begin{definition}
 Let $A$ and $B$ be $\bm{V}$-categories and 
 \[
  f,g : A \longrightarrow B
 \]
 be $\bm{V}$-functors. A $\bm{V}$-natural transformation from $f$ to $g$
 \[
  \varphi : f \Longrightarrow g
 \]
 is a morphism
 \[
  \varphi : A_0\otimes 1 \longrightarrow B_1
 \]
 satisfying the following conditions:
 \begin{enumerate}
  \item The following diagram is commutative
	\[
	 \begin{diagram}
	  \node{(A_0\otimes 1)\otimes (A_0\otimes 1)}
	  \arrow{s,l}{\varphi\otimes f_0\otimes 1}
	  \node{A_0\otimes 1} \arrow{e,t}{\Delta} \arrow{s,r}{\varphi}
	  \arrow{w,t}{\Delta} 
	  \node{(A_0\otimes 1)\otimes  
	  (A_0\otimes 1)} \arrow{s,r}{\varphi\otimes g_0\otimes 1} \\
	  \node{B_1\otimes (B_0\otimes 1)} \node{B_1} \arrow{e,t}{t}
	  \arrow{w,t}{s} 
	  \node{B_1\otimes (B_0\otimes 1).}
	 \end{diagram}
	\]
  \item The following diagram is commutative
	\[
	 \begin{diagram}
	  \node{} \node{A_1} \arrow{se,t}{T\circ t} \arrow{sw,t}{s} \node{} \\
	  \node{A_1\Box_{A_0\otimes 1} (A_0\otimes 1)}
	  \arrow{s,l}{g\otimes\varphi} \node{} 
	  \node{(A_0\otimes 1)\Box_{A_0\otimes 1} A_1}
	  \arrow{s,r}{\varphi\otimes f} \\ 
	  \node{B_1\Box_{B_0\otimes 1} B_1} \arrow{se,b}{\circ} \node{}
	  \node{B_1\Box_{B_0\otimes 1} B_1} 
	  \arrow{sw,b}{\circ} \\ 
	  \node{} \node{B_1}
	 \end{diagram}
	\]
 \end{enumerate}
\end{definition}

We have seen in Lemma \ref{free_quiver} that the ``free object
functor''
\[
 (-)\otimes 1 : \Sets \longrightarrow \bm{V}
\]
can be extended to
\[
 (-)\otimes 1 : \category{Quivers} \longrightarrow
 \bm{V}\text{-}\category{Quivers}. 
\]
It induces a functor
\[
 (-)\otimes 1 : \Categories \longrightarrow \categories{\bm{V}}.
\]

\begin{lemma}
 \label{free_category}
 For a small category $I$  with the set of objects $I_0$ and the set
 of morphisms $I_1$, define a $\bm{V}$-category $I\otimes 1$ by
 $(I\otimes 1)_0=I_0$ and $(I\otimes 1)_1=I_1\otimes 1$. The structure
 morphisms are induced by those of $I$.

 Then we obtain a $2$-functor
 \[
 (-)\otimes 1 : \Categories \longrightarrow \categories{\bm{V}}.
 \]
\end{lemma}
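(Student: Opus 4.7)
The plan is to realize $I\otimes 1$ as a monoid in the bicomodule category (Definition \ref{enriched_category_definition2}) by transporting the monoid structure of $I$ regarded as a monoid in $(\category{Quivers}(I_0), \times_{I_0}, I_0)$, using that $(-)\otimes 1$ upgrades from an oplax monoidal functor on sets to a \emph{strong} monoidal functor on quivers. Example \ref{free_quiver} already endows $I_1\otimes 1$ with the bicomodule structure over $I_0\otimes 1$ coming from the source and target maps of the underlying quiver of $I$, so what is missing is the unit and composition.

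The key step is to establish a canonical isomorphism
\[
(T\otimes 1)\,\Box_{S\otimes 1}\,(T'\otimes 1) \;\cong\; (T\times_S T')\otimes 1
\]
for any sets $T,T'$ over $S$. Since $(-)\otimes 1$ is left adjoint (Assumption \ref{condition_on_V}), it preserves coproducts, so $T\otimes 1 \cong \coprod_{t\in T}1$. Distributivity of $\otimes$ over coproducts (part of Definition \ref{monoidal_category_with_coproducts}, invoked in Assumption \ref{condition_on_V}) then yields $(T\otimes 1)\otimes(T'\otimes 1) \cong (T\times T')\otimes 1$; under this identification, the equalizer defining $\Box_{S\otimes 1}$ decomposes into the sub-coproduct indexed precisely by $T\times_S T'$. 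In particular, the oplax map $\theta$ of Lemma \ref{otimes1_is_lax_monoidal} is an isomorphism, and $(-)\otimes 1$ becomes strong monoidal as a functor $(\category{Quivers}(I_0),\times_{I_0}) \to (\bm{V}\text{-}\category{Quivers}(I_0),\Box_{I_0\otimes 1})$.

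Strong monoidal functors send monoids to monoids, so applying $(-)\otimes 1$ to the monoid $I$ produces a monoid structure on $I\otimes 1$, i.e.\ a $\bm{V}$-category: the unit $I_0\otimes 1 \to I_1\otimes 1$ is the image of the identity section of $I$, and the composition $(I_1\otimes 1)\,\Box_{I_0\otimes 1}\,(I_1\otimes 1) \to I_1\otimes 1$ is the image of $\circ_I$ under $(-)\otimes 1$, after applying the cotensor iso above. Unitality and associativity are inherited from $I$ through the strong monoidal functor.

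For $2$-functoriality, a functor $F:I\to J$ is a morphism of monoid objects in quivers, so $(-)\otimes 1$ sends it to a morphism of monoid objects, namely a $\bm{V}$-functor. A natural transformation $\alpha:F\Rightarrow G$ is encoded by a set-map $\alpha:I_0\to J_1$ satisfying a naturality square in quivers; applying $(-)\otimes 1$ gives $\alpha\otimes 1:I_0\otimes 1\to J_1\otimes 1$, and the naturality square transports to the defining commutative diagrams of a $\bm{V}$-natural transformation. The main obstacle is the cotensor isomorphism; once the upgrading of $(-)\otimes 1$ from oplax-monoidal-on-$\Sets$ to strong-monoidal-on-quivers is established, every remaining verification reduces to transport of structure.
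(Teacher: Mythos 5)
Your proposal is sound, and it is worth noting that the paper itself offers no proof of this lemma at all: it is stated immediately after Example \ref{free_quiver} with the phrase ``the structure morphisms are induced by those of $I$'' doing all the work. Your argument supplies exactly the missing content, and in the framework the paper intends (the appendix's Definition \ref{enriched_category_definition2}, where a $\bm{V}$-category with object set $S$ is a monoid in $(\biComod{(S\otimes 1)}{(S\otimes 1)},\Box_{S\otimes 1},S\otimes 1)$): you upgrade $(-)\otimes 1$ to a strong monoidal functor on quivers over a fixed vertex set and transport the monoid structure of $I$. Your observation that $\theta$ of Lemma \ref{otimes1_is_lax_monoidal} is invertible is correct and follows from the stated axioms ($(-)\otimes 1$ preserves coproducts as a left adjoint, and $\otimes$ distributes over coproducts by Assumption \ref{condition_on_V}); this also yields the more pedestrian verification in the traditional definition of \S\ref{enriched_category}, where the composition is simply $\theta^{-1}$ followed by $\circ_I\otimes 1$ on each hom-object, which is what the main text actually uses.

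The one step you should not treat as automatic is the isomorphism $(T\otimes 1)\Box_{S\otimes 1}(T'\otimes 1)\cong(T\times_S T')\otimes 1$. Identifying the equalizer defining $\Box_{S\otimes 1}$ with the sub-coproduct indexed by $T\times_S T'$ requires that the relevant equalizers of maps between coproducts of copies of $1$ decompose componentwise (disjointness and universality of coproducts, or an additive analogue). This is true in every example the paper considers, but it is not a formal consequence of Assumption \ref{condition_on_V}; the paper is aware of exactly this issue, since Lemma \ref{monoidal_category_of_V-categories} has to impose the analogous equalizer condition as an explicit hypothesis, verified case by case in the subsequent Remark. You should either add the corresponding hypothesis or restrict to extensive/Abelian $\bm{V}$; with that caveat in place, the rest of your argument (monoids to monoids, monoid morphisms to $\bm{V}$-functors, and naturality squares to the two defining diagrams of a $\bm{V}$-natural transformation) is a correct transport-of-structure verification of the $2$-functoriality.
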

\subsection{The Grothendieck Construction as a Left Adjoint to the
  Diagonal Functor}
\label{left_adjoint_to_diagonal}

Recall from Definition \ref{2-diagonal} that we have the
diagonal functor
\[
  \Delta : \bm{C} \longrightarrow \lOplax(I,\bm{C}). 
\]

The following adjunction is well-known for non-enriched categories. For
example, it can be found in Thomason's \cite{Thomason79-1}. According to
Thomason, it is originally due to J.~Gray \cite{J.Gray69}. For the sake
of completeness, we give a proof of an enriched version of this
adjunction. 

\begin{theorem}
 \label{first_adjunction}
 For any oplax functor $X : I \to \categories{\bm{V}}$ and a
 $\bm{V}$-category $A$, we have the following natural isomorphism of
 categories
 \[
  \categories{\bm{V}}(\Gr(X),A) \cong
 \lOplax(I,\categories{\bm{V}})(X,\Delta(A)). 
 \]
\end{theorem}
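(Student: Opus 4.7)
The plan is to construct an explicit bijection between the two hom-categories, using the fact that $\Gr(X)$ is built by coproducts over the components $X(j)(X(u)(x),y)$ indexed by morphisms $u:i\to j$ in $I$, so a $\bm{V}$-functor out of $\Gr(X)$ is determined by, and freely specified by, a coherent family of data on each component.

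First, given a $\bm{V}$-functor $F:\Gr(X)\to A$, I will define $(\tilde F,\varphi):X\Rightarrow\Delta(A)$ as follows. On objects, set $\tilde F(i)(x)=F(x,i)$. On morphisms of $X(i)$, compose the precomposition with $\eta_i(x):X(1_i)(x)\to x$, the inclusion $X(i)(X(1_i)(x),y)\hookrightarrow\Gr(X)((x,i),(y,i))$ into the $u=1_i$ component, and $F$. For the $2$-cells $\varphi(u)(x):\tilde F(i)(x)\to\tilde F(j)(X(u)(x))$ (where $u:i\to j$), apply $F$ to the image of $1_{X(u)(x)}\in X(j)(X(u)(x),X(u)(x))$ under the inclusion into the $u$-component of $\Gr(X)((x,i),(X(u)(x),j))$. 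Conversely, given $(\tilde F,\varphi)$, I define $F(x,i)=\tilde F(i)(x)$ on objects and, on the $u$-component $X(j)(X(u)(x),y)$, send $f$ to the composite $\tilde F(i)(x)\xrightarrow{\varphi(u)(x)}\tilde F(j)(X(u)(x))\xrightarrow{\tilde F(j)(f)}\tilde F(j)(y)$. The universal property of coproducts assembles these components into a morphism out of $\Gr(X)((x,i),(y,j))$.

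The two constructions are mutually inverse essentially by inspection: starting from $F$ one recovers $F$ from the identity morphisms in each component together with the inclusions, and starting from $(\tilde F,\varphi)$ the composite on the $1_i$-component recovers $\tilde F(i)$ on morphisms (via the unit axiom $\eta_i\circ\varphi(1_i)=1_{\tilde F(i)}$ of the explicit description of a morphism $X\to\Delta(A)$ given earlier in the paper), and the identity element on the $u$-component recovers $\varphi(u)$. The bulk of the work is verifying that the two sides of the bijection really are well-defined, i.e.\ that $F$-functoriality corresponds to the oplax coherence conditions on $(\tilde F,\varphi)$: the unit equation $\eta_i\circ\varphi(1_i)=1_{\tilde F(i)}$ matches preservation of identities in $\Gr(X)$ (where $1_{(x,i)}$ lives in the $u=1_i$ component), and the composability pentagon for $\varphi$ involving $\theta_{v,u}$ matches the definition of composition in $\Gr(X)$, whose formula explicitly uses $\theta_{v,u}^*$. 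This is exactly where the combinatorics live, and will be the main obstacle: it requires carefully chasing one large diagram built from the associators of $\bm{V}$, the axioms of the oplax functor $X$, and the definition of $\circ$ in $\Gr(X)$ given in \S\ref{definition_for_oplax_functor}.

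Finally I will extend the bijection to $2$-morphisms. A $\bm{V}$-natural transformation $\xi:F\Rightarrow G$ between functors $\Gr(X)\to A$ consists of morphisms $\xi(x,i):1\to A(F(x,i),G(x,i))$ satisfying $\bm{V}$-naturality. Restricting $\xi$ along the inclusions of objects into each $\Gr(X)_0\cap X(i)_0$ yields a family of $\bm{V}$-natural transformations $\theta(i):\tilde F(i)\Rightarrow\tilde G(i)$; the compatibility of $\xi$ with the morphisms in the $u$-component precisely says that $\theta$ is a morphism of left transformations in the sense of the definition of morphisms in $\lOplax(I,\categories{\bm{V}})$. Conversely, any such $\theta$ assembles into $\xi$ via the universal property of coproducts of hom-objects. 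Naturality of the whole bijection in $X$ (for left transformations) and in $A$ (for $\bm{V}$-functors) is then immediate from the explicit formulas, giving the claimed natural isomorphism of hom-categories.
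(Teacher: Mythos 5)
Your proof is correct and is essentially the paper's argument in a different packaging: your forward map $F\mapsto(\tilde F,\varphi)$ is postcomposition with the unit $\eta_X$ (inclusion of $X(i)$ into the $1_i$-component, with the element $1_{X(u)(x)}$ of the $u$-component supplying $\varphi(u)$), and your inverse is $\varepsilon_A\circ\Gr(-)$, where $\varepsilon_A$ folds the coproduct $\bigoplus_{u:i\to i'}A(a,a')$ via the counit of $I\otimes 1$ --- exactly the unit and counit the paper constructs before omitting the triangle identities. The one place you are more careful than the paper is in precomposing with $\eta_i(x):X(1_i)(x)\to x$ so that the ``canonical inclusion'' $X(i)\to\Gr(X)$ is actually a $\bm{V}$-functor when $X(1_i)\neq 1_{X(i)}$; that correction is needed and your verification sketch (unit axiom $\leftrightarrow$ identities, pentagon for $\theta_{v,u}$ $\leftrightarrow$ composition in $\Gr(X)$) is the right one.
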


\begin{proof}
 We need to define morphisms
 \begin{eqnarray*}
  \eta_X & : & X \longrightarrow \Delta(\Gr(X)) \\
  \varepsilon_A & : & \Gr(\Delta(A)) \longrightarrow A
 \end{eqnarray*}
 in $\lOplax(I,\categories{\bm{V}})$ and $\categories{\bm{V}}$,
 respectively. 

 For each $i\in I_0$, 
 \[
  \eta_X(i) : X(i) \hookrightarrow \Delta(\Gr(X))(i) = \Gr(X).
 \]
 is given by the canonical inclusion. For a morphism $u : i\to j$ in
 $I$, we need to define a $\bm{V}$-natural transformation
 \[
  \Delta(\Gr(X))(u)\circ\eta_X(i) \Longrightarrow \eta_X(j)\circ X(u). 
 \]
 It is given, for $x\in X(i)_0$, by the composition
 \[
  1 \rarrow{1_{X(u)(x)}} X(j)(X(u)(x),X(u)(x)) \hookrightarrow
 \Gr(X)((x,i),(X(u)(x),j)) = \Gr(X)(\eta_X(i)(x), \eta_X(j)(X(u)(x))). 
 \]
 It is left to the reader to check that these family of morphisms form
 a $\bm{V}$-natural transformation.
 
 The category $\Gr(\Delta(A))$ has objects
 \[
  \Gr(\Delta(A))_0 = \coprod_{i\in I_0} \Delta(A)(i)_0\times\{i\} =
 A_0\times I_0.
 \]
 For $(a,i), (a',i') \in \Gr(\Delta(A))_0$, we obtain a morphism
 \[
  \Gr(\Delta(A))((a,i),(a',i')) = \bigoplus_{u : i\to i'} A(a,a')
 \longrightarrow 
 A(a,a')\otimes (I\otimes 1)(i,i').
 \]
 or
 \[
  \Gr(\Delta(A)) \longrightarrow A\otimes (I\otimes 1).
 \]
 The counit $\varepsilon$ of $I\otimes 1$ induces
 \[
  \varepsilon_A : \Gr(\Delta(A)) \longrightarrow A\otimes (I\otimes 1)
 \rarrow{1\otimes\varepsilon} A\otimes 1 \cong A.
 \]

 It is elementary to check that $\eta_X$ and $\varepsilon_A$ give us
 adjunctions we want. The proof is omitted.
\end{proof}

\begin{example}
 Let $G$ be a group and consider the case $A=\lMod{k}$. $\Delta(A)$ 
 is $\lMod{k}$ equipped with the trivial $G$-action. Thus
 $\lOplax(G,\categories{k})(X,\Delta(A))$ is the
 category of right $G$-invariant functors from $X$ to
 $\lMod{k}$. (See Example \ref{Gamma-invariant_functor} for right
 $G$-invariant functors).

 On the other hand, the category $\categories{k}(\Gr(X),\lMod{k})$
 is the category of representations of the Grothendieck construction of
 $X$. Cibils and Marcos \cite{math/0312214} regard $\Gr(X)$ as a version
 of orbit category. Thus we can identify the category of right
 $G$-invariant functors with the category of representations of the
 Cibils-Marcos orbit category. This is observed by Asashiba and stated
 as Theorem 6.2 in \cite{0807.4706}.
\end{example}

\subsection{The Grothendieck Construction over Product Type Monoidal Categories}
\label{product_type}

In this and next sections, we specialize the constructions in this paper
to the case $\bm{V}$ is a product type symmetric monoidal category. In
this case, we do not need to use comodules.

Throughout this section, $\bm{V}$ is a product type symmetric monoidal
category. 
Let us modify the Grothendieck construction as a $2$-functor
\[
 \Gr : \lOplax(I,\categories{\bm{V}}) \longrightarrow
 \lcats{\bm{V}}\downarrow I\otimes 1.
\]

\begin{definition}
 For an oplax functor
 \[
  X : I \longrightarrow \categories{\bm{V}},
 \]
 define
 \[
  p_X : \Gr(X) \longrightarrow I\otimes 1
 \]
 by
 \[
  p_X(x,i) = i
 \]
 for objects and
 \[
  p_X : \Gr(X)((x,i),(y,j)) = \bigoplus_{u: i\to j} X(j)(X(u)(x),y)
 \longrightarrow (I\otimes 1)(i,j)
 \]
 is defined by
 \[
  X(j)(X(u)(x),y) \longrightarrow 1 \rarrow{\eta} \{u\}\otimes 1
 \hookrightarrow \Mor_{I}(i,j)\otimes 1 = (I\otimes 1)(i,j)
 \]
 on each component. Recall that we assume that $1$ is a terminal
 object in $\bm{V}$. The morphism $\eta$ is one of structure morphisms
 of the lax monoidal functor $(-)\otimes 1$.
\end{definition}

\begin{definition}
 \label{Gr(F)_for_type_T}
 For a morphism of oplax functors
 \[
  (F,\varphi) : X \longrightarrow Y,
 \]
 define
 \[
  \Gr(F,\varphi) : \Gr(X) \longrightarrow \Gr(Y)
 \]
 by Definition \ref{Gr(F)_definition}.
 
 Define a natural transformation
 \[
  \Gr(\varphi) : p_Y\circ\Gr(F,\varphi) \Longrightarrow p_X
 \]
 by the identity
 \[
 \Gr(\varphi)(i) : p_Y\circ\Gr(F,\varphi)(x,i)=i \rarrow{1_i}
 i=p_X(x,i). 
 \]
\end{definition}

\begin{lemma}
 The pair $(\Gr(F,\varphi),\Gr(\varphi))$ is a $1$-morphism in
 $\lcats{\bm{V}}\downarrow I\otimes 1$.
\end{lemma}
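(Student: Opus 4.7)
The plan is to check the two requirements for a $1$-morphism in $\lcats{\bm{V}}\downarrow (I\otimes 1)$ spelled out in Example \ref{2-category_of_comma_categories}: that $\Gr(F,\varphi)$ is a $\bm{V}$-functor from $\Gr(X)$ to $\Gr(Y)$, and that $\Gr(\varphi)\colon p_Y\circ \Gr(F,\varphi) \Longrightarrow p_X$ is a $\bm{V}$-natural transformation. Functoriality of $\Gr(F,\varphi)$ is the same verification already performed in the course of Proposition \ref{Gr_as_graded_category}, so the substance lies in checking naturality of $\Gr(\varphi)$.

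The key observation is that, because $\bm{V}$ is of product type, the two $\bm{V}$-functors $p_Y\circ \Gr(F,\varphi)$ and $p_X$ from $\Gr(X)$ to $I\otimes 1$ actually coincide on the nose. On objects both send $(x,i)$ to $i$. On morphism objects, each summand $X(j)(X(u)(x),y)$ of $\Gr(X)((x,i),(y,j))$ is carried by $\Gr(F,\varphi)$ into the corresponding $u$-summand $Y(j)(Y(u)(F(i)(x)),F(j)(y))$ of $\Gr(Y)((F(i)(x),i),(F(j)(y),j))$ directly from Definition \ref{Gr(F)_definition}. Both $p_X$ and $p_Y$ then collapse their respective $u$-summands through the unique morphism to the terminal object $1$, followed by the common composite $1\rarrow{\eta}\{u\}\otimes 1\hookrightarrow (I\otimes 1)(i,j)$, and equality of the two composites is forced by terminality of $1$.

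Once this equality is in hand, the naturality hexagon of the $\bm{V}$-natural transformation definition, whose components are the identities $1_i\colon 1 \to (I\otimes 1)(i,i)$, collapses: both legs send a morphism of $\Gr(X)((x,i),(y,j))$ to $p_X$ applied to that morphism, composed with an identity morphism of $I\otimes 1$ on either the source or target side, and the unit axioms of $I\otimes 1$ identify the two. I do not anticipate any genuine obstacle; the only bookkeeping concern is tracking the coproduct indexing on the morphism objects of $\Gr(X)$ and $\Gr(Y)$ so as to confirm that $\Gr(F,\varphi)$ respects the $u$-index, which is transparent from Definition \ref{Gr(F)_definition}.
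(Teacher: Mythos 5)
Your proof is correct and takes the same route as the paper, which simply declares the statement ``obvious from the definition'': the whole content is your observation that $\Gr(F,\varphi)$ preserves the $u$-indexing of the coproduct summands, so by terminality of $1$ the composites $p_Y\circ\Gr(F,\varphi)$ and $p_X$ coincide on the nose and the identity components trivially satisfy $\bm{V}$-naturality. Nothing is missing.
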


\begin{proof}
 Obvious from the definition.
\end{proof}

\begin{definition}
 For a $2$-morphism
 \[
  \theta : (F,\varphi) \Longrightarrow (G,\psi)
 \]
 in $\lOplax(I,\categories{\bm{V}})$, define
\[
 \Gr(\theta) : (\Gr(F,\varphi),\Gr(\varphi)) \Longrightarrow
 (\Gr(G,\psi),\Gr(\psi)) 
\]
in $\lcats{\bm{V}}\downarrow I$ by 
 \[
  \Gr(\theta)(x,i) = (\theta(i)(x), 1_i) : \Gr(F,\varphi)(x,i) =
 (F(i)(x),i) \longrightarrow (G(i)(x),i) = \Gr(G,\psi)(x,i).
 \]
\end{definition}

 This makes the following diagram of natural transformations commutative
 \[
 \xymatrix{%
 p_Y\circ \Gr(F,\varphi) \ar@{=>}_{\varphi}[dr] \ar@{=>}^{p_Y\circ
 \Gr(\theta)}[rr] & & 
 p_Y\circ \Gr(G,\psi) \ar@{=>}^{\psi}[dl] \\ 
  & p_X
  }
 \]
 since all morphisms in the diagram are identity. And we obtain a
 $2$-morphism
 \[
  \Gr(\theta) : (\Gr(F,\varphi),\Gr(\varphi)) \Longrightarrow
 (\Gr(G,\psi),\Gr(\psi)). 
 \]

Lemma \ref{Gr_as_functor_to_comma_categories} can be translated as
follows. 

\begin{lemma}
 The above constructions define a functor
 \[
 \Gr : \lOplax(I,\categories{\bm{V}})(X,X')
 \longrightarrow (\lcats{\bm{V}}\downarrow I\otimes
 1)(\Gr(X),\Gr(X')).   
 \]
\end{lemma}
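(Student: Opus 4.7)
The plan is to mirror the proof of Lemma~\ref{Gr_as_functor_to_comma_categories}, noting that in the product type setting Example~\ref{comma_category_as_comodule} identifies $\lcats{\bm{V}}\downarrow I\otimes 1$ with a full $2$-subcategory of $\rlComod{(I\otimes 1)}$ singled out by the condition that the first component of the structural natural transformation is the identity. Thus the content splits into three checks: that $\Gr(\theta)$ is a genuine $2$-morphism in the comma $2$-category (i.e.\ that it makes the required triangle of natural transformations commute), that $\Gr$ sends identity $2$-morphisms to identities, and that it is compatible with vertical composition of $2$-morphisms.

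First I would unpack the triangle condition. Given $\theta : (F,\varphi) \Rightarrow (G,\psi)$ in $\lOplax(I,\categories{\bm{V}})$, I need
\[
\xymatrix{
p_{X'}\circ \Gr(F,\varphi) \ar@{=>}^{\Gr(\varphi)}[dr] \ar@{=>}^{p_{X'}\circ\Gr(\theta)}[rr] & & p_{X'}\circ \Gr(G,\psi) \ar@{=>}^{\Gr(\psi)}[dl] \\ & p_X &
}
\]
to commute. By Definition~\ref{Gr(F)_for_type_T}, both $\Gr(\varphi)(x,i)$ and $\Gr(\psi)(x,i)$ are the identity morphism $1_i$; meanwhile, the component of $p_{X'}\circ \Gr(\theta)$ at $(x,i)$ is $p_{X'}$ applied to $(\theta(i)(x),1_i)$, which is again $1_i$. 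So every edge of the triangle is the identity natural transformation on $p_X$, and the triangle commutes trivially. This is in fact the reason the product type case is simpler than the general comodule version: the degree-preserving data $\varphi_2$, $\psi_2$, and $\Gr(\varphi)_2$ are all forced to be identities once the target is the strict comma category.

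Next I would verify preservation of identities: if $\theta = 1_{(F,\varphi)}$, then $\theta(i)(x) = 1_{F(i)(x)}$, and so $\Gr(\theta)(x,i) = (1_{F(i)(x)},1_i)$, which is the identity $2$-morphism on $(\Gr(F,\varphi),\Gr(\varphi))$ in $\lcats{\bm{V}}\downarrow I\otimes 1$. Finally, for composable $2$-morphisms $(F_0,\varphi_0) \xRightarrow{\theta_1} (F_1,\varphi_1) \xRightarrow{\theta_2} (F_2,\varphi_2)$, the equality $\Gr(\theta_2\circ\theta_1) = \Gr(\theta_2)\circ\Gr(\theta_1)$ reduces, componentwise at $(x,i)$, to the equation $((\theta_2\circ\theta_1)(i)(x),1_i) = (\theta_2(i)(x)\circ\theta_1(i)(x),1_i\circ 1_i)$, which is immediate from the componentwise definition of composition of $\bm{V}$-natural transformations together with the unit axiom in $I\otimes 1$.

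I do not anticipate any genuine obstacle: the whole argument is a direct translation of Lemma~\ref{Gr_as_functor_to_comma_categories} and, if anything, is strictly easier because degree-preservation collapses to the identity in the product type case. The only mildly delicate point is bookkeeping the identification of $\lcats{\bm{V}}\downarrow I\otimes 1$ inside $\rlComod{(I\otimes 1)}$ from Example~\ref{comma_category_as_comodule} so that the triangle check above is phrased in the correct $2$-category; once that is in place the verifications are formal.
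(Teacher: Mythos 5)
Your proposal is correct and takes essentially the same route as the paper: the paper dispatches this lemma by translating Lemma \ref{Gr_as_functor_to_comma_categories}, having already noted in the remark just before the statement that the triangle comparing $p_{X'}\circ\Gr(\theta)$ with $\Gr(\varphi)$ and $\Gr(\psi)$ commutes because every edge is the identity. Your three checks (the triangle, preservation of identity $2$-morphisms, and compatibility with vertical composition via the componentwise description of composition of $\bm{V}$-natural transformations) are exactly the ones carried out there, at the same level of detail.
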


And Proposition \ref{Gr_as_graded_category} becomes the following form.

\begin{proposition}
 The Grothendieck construction defines a functor of
 $\category{Categories}$-enriched categories, i.e.\ a $2$-functor
 \[
  \Gr : \lOplax(I,\categories{\bm{V}}) \longrightarrow
 \lcats{\bm{V}}\downarrow I\otimes 1.
 \]
\end{proposition}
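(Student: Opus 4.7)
The plan is to reduce this to Proposition \ref{Gr_as_graded_category} by exploiting the faithful inclusion
\[
 \lcats{\bm{V}}\downarrow I\otimes 1 \hookrightarrow \rlComod{(I\otimes 1)}
\]
established in Example \ref{comma_category_as_comodule} for product-type $\bm{V}$. The preceding lemma already provides the functor on each hom-category, so what remains is to verify the two axioms of a $2$-functor: preservation of identity $1$-morphisms and preservation of composition of $1$-morphisms.

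For the identity axiom, given an oplax functor $X : I \to \categories{\bm{V}}$, both the underlying $\bm{V}$-functor $\Gr(1_X)$ and the accompanying natural transformation $\Gr(\mathrm{id})$ are identities by the definitions in Definition \ref{Gr(F)_for_type_T}, so $\Gr(1_X) = 1_{(\Gr(X),p_X)}$ on the nose. For the composition axiom, given
\[
 X \xrightarrow{(F,\varphi)} X' \xrightarrow{(F',\varphi')} X'',
\]
first I would check on objects: $\Gr((F',\varphi')\circ(F,\varphi))(x,i) = ((F'\circ F)(i)(x), i) = \Gr(F',\varphi')(\Gr(F,\varphi)(x,i))$, which is immediate. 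On each summand of the morphism objects, the compatibility follows by chasing the defining composition in Definition \ref{Gr(F)_definition} and invoking the cocycle axiom in the definition of left transformation of oplax functors (which controls how $\varphi'\circ\varphi$ interacts with $Y(u)$, $F$, and $F'$). The compatibility of the natural-transformation components $\Gr(\varphi'\circ\varphi) = \Gr(\varphi')\cdot\Gr(\varphi)$ is trivial here because in the comma-category setting each $\Gr(\varphi)$ is the identity transformation on the object component $p_X(x,i) = i$, and the identity is closed under vertical pasting.

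The main (very mild) obstacle is simply bookkeeping: one must check that the identity natural transformation $\Gr(\varphi)$ produced in Definition \ref{Gr(F)_for_type_T} really makes the triangle
\[
 \xymatrix{
 p_{X''}\circ \Gr(F',\varphi')\circ\Gr(F,\varphi) \ar@{=>}[dr] \ar@{=>}[rr] & &  p_X \ar@{=}[dl] \\ & p_X
 }
\]
commute and is compatible with the pasting convention of $\lcats{\bm{V}}\downarrow I\otimes 1$ from Example \ref{2-category_of_comma_categories}; this is automatic because every arrow involved is an identity $2$-cell on the object function $i \mapsto i$. Alternatively, one can bypass these verifications entirely by observing that under the embedding of Example \ref{comma_category_as_comodule}, the assignments $(X,p_X) \mapsto (\Gr(X),\mu_X)$ and $(F,\varphi)\mapsto(\Gr(F,\varphi),\Gr(\varphi))$ here are precisely the restrictions of the corresponding assignments in \S\ref{Gr_as_2-functor} to the image of $\lcats{\bm{V}}\downarrow I\otimes 1$, and the $2$-functoriality then follows directly from Proposition \ref{Gr_as_graded_category} together with the observation that the resulting $1$-morphisms $(\Gr(F,\varphi),\Gr(\varphi))$ all have object component the identity of $i$, which is exactly the condition cutting out the comma-category $2$-subcategory inside $\rlComod{(I\otimes 1)}$.
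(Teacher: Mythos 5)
Your proof is correct and follows essentially the same route as the paper, which obtains this Proposition by translating Proposition \ref{Gr_as_graded_category} through the identification of $\lcats{\bm{V}}\downarrow(I\otimes 1)$ with a $2$-subcategory of $\rlComod{(I\otimes 1)}$ from Example \ref{comma_category_as_comodule}; the direct verifications you spell out (identities, composition on objects and hom-objects via the cocycle condition, and the triviality of the triangle of identity $2$-cells) are exactly the ones the paper carries out for Proposition \ref{Gr_as_graded_category} and for the $2$-morphism diagram in \S\ref{product_type}. One small imprecision: the condition cutting out the comma subcategory in Example \ref{comma_category_as_comodule} is that the \emph{first} component $\varphi_1(x)$ of the comodule natural transformation equals $1_{F(x)}$ (not that the object component is $1_i$), but the morphisms $(\Gr(F,\varphi),\Gr(\varphi))$ do satisfy this, so your reduction goes through.
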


\subsection{Comma Categories for Enriched Categories}
\label{comma_for_enriched}

When $\bm{V}$ is of product type, we can regard the Grothendieck
construction as
\[
 \Gr : \lOplax(I,\categories{\bm{V}}) \longrightarrow
 \lcats{\bm{V}}\downarrow (I\otimes 1).
\]
In order to define a right adjoint to the Grothendieck construction of
this form, we need comma categories for enriched categories. There seems
to be a general theory of comma objects in enriched categories. (See,
for example, \cite{R.J.Wood78}.) We choose, however, a more
down-to-earth approach, which is enough for our purposes.

\begin{definition}
 \label{comma_V-category}
 Let $\bm{V}$ be a monoidal category closed under pullbacks and 
 \[
 p : E \longrightarrow B
 \]
 be a $\bm{V}$-functor. For an object $x\in B_0$, define a
 $\bm{V}$-category $p\downarrow x$ as follows. Objects are given by
 \[
  (p\downarrow x)_0 = \coprod_{e\in E_0} \{e\}\times
 \Mor_{\underline{B}}(p(e),x). 
 \]
 For a pair $(e,f), (e',f') \in (p\downarrow x)_0$, define an object
 $(p\downarrow x)((e,f),(e',f'))$ in $\bm{V}$ by the following
 pullback diagram in $\bm{V}$
 \[
 \begin{diagram}
 \node{} \node{(p\downarrow x)((e,f),(e',f'))} \arrow[2]{s} \arrow{e}
 \node{E(e,e')} \arrow{s,r}{p} \\
 \node{} \node{} \node{B(p(e),p(e'))} \arrow{s,r}{f'_*} \\
 \node{} \node{1} \arrow{e,t}{f} \node{B(p(e),x).}
 \end{diagram}
 \]

 The composition
 \[
  (p\downarrow x)((e',f'),(e'',f'')) \otimes (p\downarrow
 x)((e,f),(e',f')) \longrightarrow (p\downarrow x)((e,f),(e'',f''))
 \]
 is defined by the commutativity of the following diagram
 \begin{center}
  \scalebox{.6}{%
  $\begin{diagram}
   \node{(p\downarrow x)((e',f'),(e'',f''))\otimes (p\downarrow
   x)((e,f),(e',f'))} \arrow[2]{s} \arrow{se} \arrow[3]{e} \node{}
    \node{} 
   \node{E(e',e'')\otimes E(e,e')} \arrow{sw,t}{p\otimes p}
    \arrow{s,r}{\circ} \\  
   \node{} \node{(p\downarrow x)((e',f'),(e'',f''))\otimes
    B(p(e),p(e'))} \arrow{e} \arrow{s}
    \node{B(p(e'),p(e''))\otimes B(p(e),p(e'))}
    \arrow{se,t}{\circ} \arrow{s,r}{f''_*\otimes 1}
    \node{E(e,e'')} \arrow{s,r}{p} \\ 
    \node{1\otimes (p\downarrow x)((e,f),(e',f'))} \arrow{s}
    \arrow{se} \arrow{e}
    \node{1\otimes B(p(e),p(e'))} \arrow{se}
    \arrow{e,t}{f'\otimes 1}
    \node{B(p(e'),x)\otimes B(p(e),p(e'))} \arrow{sse,t}{\circ}
    \node{B(p(e),p(e''))} \arrow[2]{s,r}{f''_*} \\
    \node{1\otimes 1} \arrow{s} \node{(p\downarrow x)((e,f),(e',f'))}
    \arrow{sw} \arrow{e} 
    \node{B(p(e),p(e'))} \arrow{se,t}{f'_*} \node{} \\
    \node{1} \arrow[3]{e,t}{f} \node{} \node{} \node{B(p(e),x).}
  \end{diagram}$}
 \end{center}

 The identity morphisms are defined by
 \[
  \begin{diagram}
   \node{1} \arrow{se,..} \arrow{ese,t}{1} \arrow{sse,=} \node{} \node{}
   \\ 
   \node{} \node{(p\downarrow x)((e,f),(e,f))} \arrow{e} \arrow{s}
   \node{B(p(e),p(e))} \arrow{s,r}{f_*} \\ 
   \node{} \node{1} \arrow{e,t}{f} \node{B(p(e),x).}
  \end{diagram}
 \]
\end{definition}

If we restrict our attention to $1$-morphisms in
$\lcats{\bm{V}}_I$, i.e.\ degree preserving morphisms, we have an
alternative construction for the smash product.

\begin{definition}
 Let $I$ be a small category and
 \[
 p : X \longrightarrow I\otimes 1
 \]
 a $\bm{V}$-functor. Define a functor
 \[
  \lGamma(p) : I \longrightarrow \categories{\bm{V}}
 \]
 as follows. For an object $i\in I_0$, 
 \[
  \lGamma(p)(i) = p\downarrow i.
 \]

 For a morphism $u : i \to j$ in $I$, define a $\bm{V}$-functor
 \[
  \lGamma(u)=p\downarrow u : p\downarrow i \longrightarrow p\downarrow j
 \]
 as follows. For an object $(x,v)$ in $p\downarrow i$, define
 \[
  (p\downarrow u)(x,f) = (x,u\circ f).
 \]
 For morphisms, 
 \[
  p\downarrow u : (p\downarrow i)((x,f),(x',f')) \longrightarrow
 (p\downarrow j)((x,u\circ f),(x,u\circ f'))
 \]
 is defined by the following commutative diagram
 \[
 \begin{diagram}
  \node{(p\downarrow i)((x,f),(x',f'))} \arrow{e} \arrow[2]{s}
  \node{X(x,x')} \arrow{s,r}{p} \\
  \node{} \node{(I\otimes 1)(p(x),p(x'))} \arrow{s,r}{f'_*}
  \arrow{sse,t}{(u\circ f')_*} \\
  \node{1} \arrow{ese,b}{u\circ f} \arrow{e,t}{f} \node{(I\otimes
  1)(p(x),i)} 
  \arrow{se,t}{u_*} \\
  \node{} \node{} \node{(I\otimes 1)(p(x),j).}
 \end{diagram}
 \]
\end{definition}

\begin{remark}
 When $p$ is a graded category defined by a coproduct decompositions, we
 can define $\lGamma$ without 
 assuming that $\bm{V}$ is closed under pullbacks.

 For an object $i\in I_0$, the definition of $\lGamma(p)(i)_0$ is the same. 
 For $(x,f), (x',f') \in \lGamma(i)_0$, define an object
 $\lGamma(p)(i)((x,f),(x',f'))$ in $\bm{V}$ by
 \[
  \lGamma(p)(i)((x,f),(x',f')) = \bigoplus_{u: f'\circ u=f}
 X^u(x,x'). 
 \]
 The composition
 \[
  \lGamma(p)(i)((x',f'),(x'',f''))\otimes \lGamma(p)(i)((x,f),(x',f'))
 \longrightarrow \lGamma(p)(i)((x,f),(x'',f'')) 
 \]
 is given by the composition
 \[
  X^{u'}(x',x'') \otimes X^u(x,x') \longrightarrow X^{u'\circ u}(x,x'')
 \]
 on each component.

 For a morphism $u : i \to j$ in $I$, define a $\bm{V}$-functor
 \[
  \lGamma(p)(u) : \lGamma(p)(i) \longrightarrow \lGamma(p)(j)
 \]
 as follows. For an object $(x,f)$ in $\lGamma(p)(i)$, define
 \[
  \lGamma(p)(u)(x,f) = (x,u\circ f).
 \]
 For morphisms, define
 \[
  \lGamma(p)(u) : \lGamma(p)(i)((x,f),(x',f')) \longrightarrow
 \lGamma(p)(j)((x,u\circ f),(x,u\circ f'))
 \]
 by the identity
 \[
  X^v(x,x') \longrightarrow X^{v}(x,x')
 \]
 on each component.
\end{remark}

It is straightforward to check that the above construction is compatible
with the compositions and the identities.

\begin{lemma}
 The above construction defines a functor
 \[
  \lGamma(p) = p\downarrow (-) : I \longrightarrow \categories{\bm{V}}.
 \]
\end{lemma}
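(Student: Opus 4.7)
The plan is to verify the three conditions needed for $\lGamma(p)$ to be a functor $I \to \categories{\bm{V}}$: namely, that each $\lGamma(p)(u)$ is a well-defined $\bm{V}$-functor, that $\lGamma(p)$ preserves identities, and that $\lGamma(p)$ preserves compositions. All three reductions proceed by invoking the universal property of the pullback used to define $(p\downarrow j)((x,u\circ f),(x',u\circ f'))$.

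First I would verify that $\lGamma(p)(u) : p\downarrow i \to p\downarrow j$ is a $\bm{V}$-functor. On objects the assignment $(x,f) \mapsto (x, u\circ f)$ is well defined. On morphism objects, the commutative diagram given in the definition exhibits a compatible pair of morphisms from $(p\downarrow i)((x,f),(x',f'))$ to $X(x,x')$ and to $1$ (the latter via $u\circ f$), and these factor uniquely through the pullback $(p\downarrow j)((x,u\circ f),(x',u\circ f'))$. Compatibility with the enriched composition and identities of $p\downarrow i$ and $p\downarrow j$ then follows from the functoriality of $p$ together with the uniqueness part of the universal property: two morphisms into a pullback agree once their compositions with the two projections agree.

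Second, for the identity $1_i : i \to i$, on objects we have $(x,f) \mapsto (x, 1_i\circ f) = (x,f)$, and the defining diagram for $\lGamma(p)(1_i)$ becomes the defining diagram for $(p\downarrow i)((x,f),(x',f'))$ itself with all the $u_*$ arrows replaced by identities. By uniqueness of the factorization through the pullback, $\lGamma(p)(1_i)$ is the identity on the pullback. Third, for composable $i \rarrow{u} j \rarrow{v} k$ in $I$, on objects both $\lGamma(p)(v\circ u)$ and $\lGamma(p)(v)\circ \lGamma(p)(u)$ send $(x,f)$ to $(x, v\circ u\circ f)$. On morphism objects, both composites complete the diagram
\[
\begin{diagram}
\node{(p\downarrow i)((x,f),(x',f'))} \arrow{e} \arrow[2]{s} \node{X(x,x')} \arrow{s,r}{p} \\
\node{} \node{(I\otimes 1)(p(x),p(x'))} \arrow{s,r}{(v\circ u\circ f')_*} \\
\node{1} \arrow{e,t}{v\circ u\circ f} \node{(I\otimes 1)(p(x),k),}
\end{diagram}
\]
so by the universal property they coincide.

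The main obstacle is purely bookkeeping: in step one, checking that the arrow produced by the pullback's universal property is compatible with the elaborate pullback-based definition of composition in $p\downarrow i$ and $p\downarrow j$. This reduces, again by the universal property of pullbacks, to verifying that the two candidate arrows agree after postcomposition with each of the two pullback projections, which in turn follows from the functoriality of $p$ and the associativity of composition in $I\otimes 1$. There is no conceptual difficulty beyond this diagram chase.
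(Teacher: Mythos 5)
Your proof is correct and follows essentially the same route the paper takes: the paper leaves this as "straightforward to check," and its proof of the parallel lemma for $\overleftarrow{\Gamma}(\mu)$ in the graded-category setting is exactly your argument — the identity case because the defining pullback diagram collapses to the identity factorization, and the composition case by uniqueness in the universal property of the pullback. No gaps.
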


We would like to extend $\Gamma$ to a $2$-functor
\[
 \lGamma : \lcats{\bm{V}}\downarrow (I\otimes 1) \longrightarrow
 \lOplax(I,\categories{\bm{V}}). 
\]

\begin{definition}
 Let 
 \begin{eqnarray*}
  p & : & X \longrightarrow I\otimes 1 \\
  p' & : & X' \longrightarrow I\otimes 1
 \end{eqnarray*}
 be objects in $\lcats{\bm{V}}\downarrow (I\otimes 1)$.
 For a morphism
 \[
  (F,\varphi) : p \longrightarrow p',
 \]
 define a morphism of oplax functors
 \[
  (\lGamma(F,\varphi),\Gamma(\varphi)) : \lGamma(p) \longrightarrow
 \lGamma(p') 
 \]
 as follows. For $i\in I_0$, a $\bm{V}$-functor
 \[
  \lGamma(F,\varphi)(i) : \lGamma(p)(i) = p\downarrow i
 \longrightarrow p'\downarrow i = \lGamma(p')(i)
 \]
 is defined on objects by 
 \[
  \lGamma(F,\varphi)(i)(x,f) = (F(x),f\circ \varphi(x))
 \]
 and on morphisms by the commutativity of the following diagram
 \begin{center}
  \scalebox{.7}{%
  $\begin{diagram}
   \node{(p\downarrow i)((x,f),(x',f'))} \arrow[3]{e} \arrow[4]{s}
    \arrow{se,..} \node{} \node{} 
   \node{X(x,x')} \arrow[2]{s} \arrow{sw,t}{F} \\
   \node{} \node{(p'\downarrow i)((F(x),f\circ
   \varphi(x)),(F(x'),f'\circ\varphi(x')))} \arrow{e} \arrow[2]{s}
   \node{X'(F(x),F(x'))} \arrow{s} \node{} \\ 
   \node{} \node{} \node{(I\otimes 1)(p'\circ F(x),p'\circ F(x'))}
    \arrow{s} 
    \node{(I\otimes 1)(p(x),p(x'))}  
    \arrow[2]{s} \\
    \node{} \node{1} \arrow{sw,=} \arrow{e} \node{(I\otimes 1)(p'\circ
    F(x),i)} 
    \arrow{se,t}{(\varphi(x)')^*}\node{} \\ 
    \node{1} \arrow[3]{e} \node{} \node{} \node{(I\otimes 1)(p(x),i)}
  \end{diagram}$
  }
 \end{center}

 For each $u : i \to j$ in $I$, define a $\bm{V}$-natural
 transformation 
 \[
 \lGamma(\varphi)(u) : \lGamma(p')(u)\circ \lGamma(F,\varphi)(i)
 \Longrightarrow \lGamma(F,\varphi)(j)\circ \lGamma(p)(u) 
 \]
 to be the identity.
\end{definition}

It is straightforward to check that the pair
$(\lGamma(F,\varphi),\lGamma(\varphi))$ is a morphism of oplax
functors. This correspondence extends as follows.

\begin{lemma}
 The above construction $\Gamma$ defines a functor 
 \[
  \lGamma : (\lcats{\bm{V}}\downarrow I\otimes 1)(p,p')
 \longrightarrow
 \lOplax(I,\categories{\bm{V}})(\Gamma(p),\Gamma(p')). 
 \]
\end{lemma}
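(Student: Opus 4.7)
The plan is to define $\lGamma$ on $2$-morphisms, check the result is a well-defined $2$-morphism in $\lOplax(I,\categories{\bm{V}})$, and then verify functoriality (preservation of identity and composition). Let $\xi : (F,\varphi) \Rightarrow (G,\psi)$ be a $2$-morphism in $(\lcats{\bm{V}}\downarrow (I\otimes 1))(p,p')$; by Example \ref{2-category_of_comma_categories} this is a $\bm{V}$-natural transformation $\xi : F \Rightarrow G$ satisfying $\psi\circ (p'\circ\xi) = \varphi$. I would then define $\lGamma(\xi)(i) : \lGamma(F,\varphi)(i) \Rightarrow \lGamma(G,\psi)(i)$ at an object $(x,f)\in (p\downarrow i)_0$ by interpreting $\xi(x) : F(x) \to G(x)$ as a morphism $(F(x), f\circ\varphi(x)) \to (G(x), f\circ\psi(x))$ in $p'\downarrow i$. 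The key observation that this actually lands in the comma category is the computation
\[
(f\circ\psi(x))\circ p'(\xi(x)) \;=\; f\circ(\psi(x)\circ p'(\xi(x))) \;=\; f\circ\varphi(x),
\]
which is precisely the coherence condition required of $\xi$ as a $2$-morphism in the comma $2$-category.

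Next I would check that each $\lGamma(\xi)(i)$ is a $\bm{V}$-natural transformation by chasing a morphism $(p\downarrow i)((x,f),(x',f')) \to X(x,x')$ through the pullback defining $(p'\downarrow i)((F(x),f\circ\varphi(x)), (G(x'),f'\circ\psi(x')))$; the commutativity of the resulting outer diagram reduces to the $\bm{V}$-naturality of $\xi$ in $\categories{\bm{V}}$, and the two legs into $(I\otimes 1)(p(x),i)$ agree because $p'(\xi(x))\circ\varphi(x')$-type composites collapse via the coherence of $\xi$. To confirm that the family $\{\lGamma(\xi)(i)\}_{i\in I_0}$ is a $2$-morphism in $\lOplax(I,\categories{\bm{V}})$, I need to verify the square
\[
\xymatrix{
\lGamma(p')(u)\circ\lGamma(F,\varphi)(i) \ar@{=>}[r]^{\lGamma(\varphi)(u)} \ar@{=>}[d]_{\lGamma(p')(u)\circ\lGamma(\xi)(i)} & \lGamma(F,\varphi)(j)\circ\lGamma(p)(u) \ar@{=>}[d]^{\lGamma(\xi)(j)\circ\lGamma(p)(u)} \\
\lGamma(p')(u)\circ\lGamma(G,\psi)(i) \ar@{=>}[r]_{\lGamma(\psi)(u)} & \lGamma(G,\psi)(j)\circ\lGamma(p)(u)
}
\]
for each $u : i\to j$. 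Since both $\lGamma(\varphi)(u)$ and $\lGamma(\psi)(u)$ are identity $2$-morphisms, and since $\lGamma(p')(u)$ acts by post-composition of the second coordinate with $u$ while leaving the $X'$-component of morphisms untouched, the square collapses to the tautology $\lGamma(p')(u)\circ\lGamma(\xi)(i) = \lGamma(\xi)(j)\circ\lGamma(p)(u)$.

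Finally, functoriality follows formally. The identity $2$-morphism $1_{(F,\varphi)}$ has components $1_{F(x)}$, which correspond to identity morphisms in each $p'\downarrow i$, hence $\lGamma(1_{(F,\varphi)}) = 1_{\lGamma(F,\varphi)}$. For a composable pair
\[
\xymatrix{(F_0,\varphi_0) \ar@{=>}[r]^{\xi_1} & (F_1,\varphi_1) \ar@{=>}[r]^{\xi_2} & (F_2,\varphi_2),}
\]
the component $(\xi_2\circ\xi_1)(x) = \xi_2(x)\circ\xi_1(x)$, interpreted as a morphism in $p'\downarrow i$, factors through the intermediate object $(F_1(x), f\circ\varphi_1(x))$ because $\xi_1(x)$ and $\xi_2(x)$ individually satisfy the comma triangle. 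Hence $\lGamma(\xi_2\circ\xi_1)(i) = \lGamma(\xi_2)(i)\circ \lGamma(\xi_1)(i)$ componentwise, giving $\lGamma(\xi_2\circ\xi_1) = \lGamma(\xi_2)\circ\lGamma(\xi_1)$.

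The only real obstacle is the bookkeeping around the pullback diagrams and the various $f$-post-compositions; but the second coordinates $f\circ\varphi(x)$ and $f\circ\psi(x)$ are set up precisely so that the comma triangle for $\xi(x)$ commutes by the coherence condition on $\xi$, so every verification reduces to one instance of this coherence plus naturality of $\xi$ and the universal property of the pullback defining $(p'\downarrow i)((F(x),f\circ\varphi(x)),(G(x'),f'\circ\psi(x')))$.
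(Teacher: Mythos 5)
Your proof is correct, but it takes a different route from the paper. The paper's proof of this lemma is a one-line reduction: it invokes Lemma \ref{Gamma_as_functor}, the corresponding functoriality statement for $\overleftarrow{\Gamma}$ on the $2$-category $\lcats{\bm{V}}_I$ of graded (comodule) categories, together with the identification of $\lcats{\bm{V}}\downarrow(I\otimes 1)$ with a sub-$2$-category of $\rlComod{(I\otimes 1)}$ from Example \ref{comma_category_as_comodule}; in particular the paper never explicitly writes down the action of $\lGamma$ on $2$-morphisms in the comma-category language. You instead construct that action directly: you unwind a $2$-morphism $\xi$ in the comma $2$-category as a $\bm{V}$-natural transformation satisfying $\psi\circ(p'\circ\xi)=\varphi$, lift $\xi(x)$ into the pullback defining $(p'\downarrow i)\bigl((F(x),f\circ\varphi(x)),(G(x),f\circ\psi(x))\bigr)$ via exactly that coherence identity, and then verify naturality, the compatibility square (trivial since $\lGamma(\varphi)(u)$ and $\lGamma(\psi)(u)$ are identities and the two whiskerings agree by associativity of composition with $u$), and preservation of identities and vertical composition via the universal property of the pullback --- which is the same mechanism the paper uses inside the proof of Lemma \ref{Gamma_as_functor}. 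What your approach buys is a self-contained argument that makes the $2$-morphism action explicit in the comma-category setting; what the paper's approach buys is brevity and the reassurance that the product-type case is literally a specialization of the comodule construction rather than a parallel development.
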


\begin{proof}
 This follows from Lemma \ref{Gamma_as_functor}.
\end{proof}

Proposition \ref{Gamma_as_2-functor} specializes to the following.

\begin{proposition}
 $\lGamma$ defines a $2$-functor
 \[
 \lGamma : \lcats{\bm{V}}\downarrow(I\otimes 1) \longrightarrow
 \lOplax(I,\categories{\bm{V}}). 
 \]
\end{proposition}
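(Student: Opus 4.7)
The plan is to reduce this proposition to its non-specialized counterpart, Proposition \ref{Gamma_as_2-functor}, which has already been established in the general comodule setting. Recall from Example \ref{comma_category_as_comodule} that when $\bm{V}$ is of product type, the comma 2-category $\lcats{\bm{V}}\downarrow (I\otimes 1)$ embeds as a full 2-subcategory of $\rlComod{(I\otimes 1)}$ via the assignment that sends a $\bm{V}$-functor $p\colon X\to I\otimes 1$ to the comodule structure $(1\otimes p)\circ \Delta\colon X\to X\otimes (I\otimes 1)$, and sends a 1-morphism $(F,\varphi)$ to the morphism of comodules whose natural transformation has first component equal to the identity. So the first step of the proof is to verify that the $\lGamma$ constructed here from the comma-category viewpoint agrees, under this embedding, with the previously defined $\lGamma$ on comodule categories.

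After this identification, functoriality on each hom-category has already been proved in the preceding Lemma, and the 2-functor axioms (preservation of identities and of composition of 1-morphisms) then follow immediately from Proposition \ref{Gamma_as_2-functor}. To spell out the identification, I would check three points in order: (a) on objects, $\lGamma(p)(i)=p\downarrow i$ matches $\mu\downarrow i$ for the induced comodule $\mu=(1\otimes p)\circ\Delta$, since the pullback defining $(p\downarrow i)((e,f),(e',f'))$ becomes identical to the pullback in Definition of $\mu\downarrow i$ after rewriting via the product-type identification of $(I\otimes 1)$-coactions as maps $p\colon X\to I\otimes 1$; (b) on 1-morphisms, the functor $\lGamma(F,\varphi)(i)$ defined here sends $(x,f)\mapsto (F(x),f\circ\varphi(x))$, which is precisely what the comodule construction yields once one identifies $\varphi$ (a natural transformation $p'\circ F\Rightarrow p$) with its degree-preserving counterpart $\varphi_2$; (c) the 2-morphism component $\lGamma(\varphi)(u)$ being the identity here matches the identity-2-morphism produced in the comodule version.

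Once (a)–(c) are checked, verification of the two 2-functor diagrams (compatibility with units and with composition of 1-morphisms in $\lcats{\bm{V}}\downarrow(I\otimes 1)$) is simply transported from Proposition \ref{Gamma_as_2-functor}. Alternatively, one can argue directly: preservation of identity 1-morphisms follows from $\lGamma(1_p,1_{p})(i)(x,f)=(x,f\circ 1_{p(x)})=(x,f)$ together with the universality of the pullback defining morphism objects; preservation of composition $(F',\varphi')\circ (F,\varphi)=(F'\circ F,\varphi'\cdot (\varphi\ast F))$ follows because
\[
\lGamma(F'\circ F,\varphi'\cdot(\varphi\ast F))(i)(x,f)=(F'F(x),f\circ\varphi(x)\circ\varphi'(F(x))),
\]
which coincides with $\lGamma(F',\varphi')(i)\circ\lGamma(F,\varphi)(i)$ applied to $(x,f)$, and on morphisms the two composites satisfy the same universal property with respect to the defining pullback.

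The main obstacle, and the only point requiring care, is book-keeping the direction of $\varphi$: in the comma setting $\varphi$ goes from $p'\circ F$ to $p$, and this translates under the embedding into a $\varphi_2\in \Mor_I(p'(F(x)),p(x))$ used in the degree-preserving condition. Getting the variances right on both sides of the identification (especially in the diagram defining $\lGamma(F,\varphi)(i)$ on morphism objects, where $(\varphi(x'))^\ast$ appears) is the bulk of the verification; once one confirms this matches Example \ref{comma_category_as_comodule}, everything else is formal and the proposition follows from Proposition \ref{Gamma_as_2-functor}.
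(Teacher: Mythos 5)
Your proposal matches the paper's approach: the paper proves this proposition precisely by identifying $\lcats{\bm{V}}\downarrow(I\otimes 1)$ with $\lcats{\bm{V}}_I$ via Example \ref{comma_category_as_comodule} and then declaring it a specialization of Proposition \ref{Gamma_as_2-functor} (with the hom-level functoriality deduced from Lemma \ref{Gamma_as_functor}), exactly the reduction you describe. Your checks (a)--(c) and the direct verification of unit and composition preservation supply the details the paper leaves implicit, and they are correct; the only minor quibble is that the comma $2$-category is a (not necessarily full) $2$-subcategory of $\rlComod{(I\otimes 1)}$, which does not affect the argument.
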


We have seen in Example \ref{comma_category_as_comodule} that
$\lcats{\bm{V}}_I$ can be identified with
$\lcats{\bm{V}}\downarrow (I\otimes 1)$. By translating $\lGamma$
for comma categories into $\lGamma$ for comodules, we obtain the
following corollary to Theorem \ref{Gr_and_Gamma_are_adjoint}.

\begin{theorem}
 Let $\bm{V}$ be a product type symmetric monoidal category. Then the 
 $2$-functor 
 \[
  \lGamma : \lcats{\bm{V}}\downarrow(I\otimes 1) \longrightarrow
 \lOplax(I,\categories{\bm{V}})
 \]
 is right adjoint to
 \[
  \Gr : \lOplax(I,\categories{\bm{V}}) \longrightarrow
 \lcats{\bm{V}}\downarrow(I\otimes 1).
 \]
\end{theorem}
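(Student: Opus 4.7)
The plan is to deduce this from the general adjunction already established in Theorem \ref{Gr_and_Gamma_are_adjoint}, by showing that under the product-type assumption, the comma $2$-category $\lcats{\bm{V}}\downarrow(I\otimes 1)$ is identified with $\lcats{\bm{V}}_I$ in a way that is compatible with both $\Gr$ and $\lGamma$. First I would invoke Example \ref{comma_category_as_comodule}, which exhibits, when $\bm{V}$ is of product type, a fully faithful $2$-functor
\[
 \Phi : \lcats{\bm{V}}\downarrow(I\otimes 1) \longrightarrow \rlComod{(I\otimes 1)}
\]
sending $p : X \to I\otimes 1$ to $(1\otimes p)\circ\Delta : X \to X\otimes (I\otimes 1)$, whose essential image consists precisely of the degree-preserving morphisms, i.e.\ of $\lcats{\bm{V}}_I$. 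Thus $\Phi$ is a $2$-equivalence onto $\lcats{\bm{V}}_I$.

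Next I would check the two compatibilities. On the one hand, for an oplax functor $X : I \to \categories{\bm{V}}$, the comodule structure $\mu_X$ constructed in \S\ref{Gr_as_2-functor} factors through $(1\otimes p_X)\circ \Delta$, where $p_X : \Gr(X) \to I\otimes 1$ is the projection defined at the beginning of \S\ref{product_type}; by definition of $\mu_X$ the second coordinate is precisely the composition with $p_X$, so $\mu_X = \Phi(p_X)$. Moreover Definition \ref{Gr(F)_for_type_T} shows that the natural transformation $\Gr(\varphi)$ in the comma setting corresponds to the identity coaction-$2$-cell $\Gr(\varphi)$ in the comodule setting, so $\Gr$ lifts through $\Phi$ from the comma target to the comodule target. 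On the other hand, for $p : X \to I\otimes 1$ in the comma $2$-category, I would verify that the comma $\bm{V}$-category $p\downarrow i$ of Definition \ref{comma_V-category} coincides with the fiber $\mu\downarrow i$ of \S\ref{enriched_fiber} applied to $\mu = \Phi(p)$: in both cases the object set is $\coprod_e \{e\}\times\Mor_I(p(e),i)$, and the hom-object is defined by the same pullback against $(I\otimes 1)(p(e),i)$, since in the product-type case the defining equalizer collapses (one of the two parallel morphisms factors through $1$ via the counit) to the pullback used for $p\downarrow i$. The same comparison on morphisms $u : i \to j$ shows that $\lGamma(p)$ of this section agrees on the nose with $\lGamma(\Phi(p))$ of \S\ref{enriched_fiber}, and the $2$-functoriality compatibilities are immediate.

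With these two compatibilities in place, the adjunction follows formally. Given an oplax functor $X$ and $p : E \to I\otimes 1$, Theorem \ref{Gr_and_Gamma_are_adjoint} provides a natural isomorphism of categories
\[
 (\lcats{\bm{V}}_I)((\Gr(X),\mu_X),(E,\Phi(p))) \;\cong\; \lOplax(I,\categories{\bm{V}})(X,\lGamma(\Phi(p))).
\]
Substituting $\mu_X = \Phi(p_X)$ and $\lGamma(\Phi(p)) = \lGamma(p)$, and using that $\Phi$ is fully faithful as a $2$-functor onto $\lcats{\bm{V}}_I$, the left-hand side is naturally isomorphic to
\[
 (\lcats{\bm{V}}\downarrow(I\otimes 1))(p_X, p) \;=\; (\lcats{\bm{V}}\downarrow(I\otimes 1))(\Gr(X),p),
\]
which gives the claimed adjunction. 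It also shows that the unit $\eta_X : X \to \lGamma(p_X)$ and counit $\varepsilon_p : \Gr(\lGamma(p)) \to p$ of the present adjunction are obtained by transporting those of Theorem \ref{Gr_and_Gamma_are_adjoint} across $\Phi$.

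The main obstacle, and the only step that requires care rather than bookkeeping, is the identification of $\mu\downarrow i$ with $p\downarrow i$ in the product-type case: one must check that the equalizer defining $(\mu\downarrow i)((e,u),(e',u'))$ degenerates into the pullback of Definition \ref{comma_V-category}. This hinges on the fact that in a product-type monoidal category, the coaction $\mu$ is entirely determined by $p = \pr_2\circ\mu$ via the counitality (Example \ref{coaction_in_product_type}), so that one of the two morphisms in the equalizer coincides with the morphism $1\otimes u$ appearing in the pullback for $p\downarrow i$. Once this is verified for objects and the parallel verification is carried out on hom-objects and composition, the entire $2$-categorical translation is automatic.
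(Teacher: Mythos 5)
Your proposal follows the same route as the paper: the paper states this theorem as a corollary of Theorem \ref{Gr_and_Gamma_are_adjoint}, obtained by identifying $\lcats{\bm{V}}\downarrow(I\otimes 1)$ with $\lcats{\bm{V}}_I$ via Example \ref{comma_category_as_comodule} and translating $\lGamma$ from the comma-category description to the comodule description. Your write-up supplies the compatibility checks ($\mu_X=\Phi(p_X)$, $p\downarrow i=\mu\downarrow i$ via collapse of the equalizer to a pullback) that the paper leaves implicit, and is correct.
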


\addcontentsline{toc}{section}{References}
\bibliographystyle{halpha}
\bibliography{%
\bibdir/mathAb,%
\bibdir/mathAl,%
\bibdir/mathAn,%
\bibdir/mathA,%
\bibdir/mathB,%
\bibdir/mathBa,%
\bibdir/mathBe,%
\bibdir/mathBo,%
\bibdir/mathBr,%
\bibdir/mathBu,%
\bibdir/mathCh,%
\bibdir/mathCo,%
\bibdir/mathC,%
\bibdir/mathDa,%
\bibdir/mathDe,%
\bibdir/mathD,%
\bibdir/mathE,%
\bibdir/mathFa,%
\bibdir/mathFo,%
\bibdir/mathFr,%
\bibdir/mathF,%
\bibdir/mathGa,%
\bibdir/mathGo,%
\bibdir/mathGr,%
\bibdir/mathG,%
\bibdir/mathHa,%
\bibdir/mathHe,%
\bibdir/mathH,%
\bibdir/mathI,%
\bibdir/mathJ,%
\bibdir/mathKa,%
\bibdir/mathKo,%
\bibdir/mathK,%
\bibdir/mathLa,%
\bibdir/mathLe,%
\bibdir/mathLu,%
\bibdir/mathL,%
\bibdir/mathMa,%
\bibdir/mathMc,%
\bibdir/mathMi,%
\bibdir/mathM,%
\bibdir/mathN,%
\bibdir/mathO,%
\bibdir/mathP,%
\bibdir/mathQ,%
\bibdir/mathR,%
\bibdir/mathSa,%
\bibdir/mathSe,%
\bibdir/mathSt,%
\bibdir/mathS,%
\bibdir/mathT,%
\bibdir/mathU,%
\bibdir/mathV,%
\bibdir/mathW,%
\bibdir/mathX,%
\bibdir/mathY,%
\bibdir/mathZ,%
\bibdir/physics,%
\bibdir/personal,%
\bibdir/japanese}

\end{document}